\crefname{hypothesis}{Hypothesis}{Hypotheses}
\crefname{fact}{Fact}{Facts}
\title{A new class of one-step A-stable and L-stable schemes of high-order accuracy for parabolic type equations\thanks{Submitted to the editors DATE.\funding{This work was supported by the Natural Science Foundation of Shandong Province (Grant Nos: ZR2023YQ007 and ZR2024MA077), the Taishan Scholars Program of Shandong Province of China (Grant No: tsqn202408140).}}}
\author{Xiaoyi Li\thanks{School of Mathematics, Shandong University, Jinan, 250100, Shandong, China(\email{202311818@mail.sdu.edu.cn}).}
\and Aijie Cheng\thanks{Corresponding author. School of Mathematics, Shandong University, Jinan, 250100, Shandong, China (\email{aijie@sdu.edu.cn}).}
\and Zhengguang Liu\thanks{Corresponding author. School of Mathematics and Statistics, Shandong Normal University, Jinan, 250358, Shandong, China(\email{liuzhg@sdnu.edu.cn}).}}
\DeclareMathOperator{\diag}{diag}
\begin{document}

\maketitle

\begin{abstract}
Recently, a new class of BDF schemes proposed in [F. Huang and J. Shen, SIAM  J Numer. Anal., 62.4, 1609--1637] for the parabolic type equations are studied in this paper. The basic idea is based on the Taylor expansions at time $t^{n+\beta}$ with $\beta>1$ being a tunable parameter. These new BDF schemes allow larger time steps at higher order r for stiff problems than that which allowed with a usual higher-order scheme. However, multi-step methods like BDF exhibit inherent disadvantages relative to one-step methods in practical implementations.  In this paper, inspired by their excellent work, we construct a new class of high-order one-step schemes for linear parabolic-type equations. These new schemes, with several suitable $\beta_i$, can achieve A-stable, or even L-stable. Specially, the new scheme with special parameters $\beta_i$ can be regarded as the classical one-step Runge-Kutta scheme with a stabilized term. Besides, we provide two different techniques to construct the one-step high-order schemes: the first one is by choosing different parameters $\beta_i$, and the second one is by increasing the number of intermediate layers. Both methods have been proven to be highly effective and even exhibit superconvergence property. Finally, we also conducted several numerical experiments to support our conclusions.
\end{abstract}

\begin{keywords}
One-step, A-stable, L-stable, Runge-Kutta, Superconvergence
\end{keywords}

\begin{MSCcodes}
65L06, 35K40, 65L20, 65M12, 65Z05
\end{MSCcodes}

\section{Introduction}
Parabolic partial differential equations (PDEs) provide fundamental mathematical frameworks for modeling critical physical phenomena. Key applications include thermal and mass diffusion processes \cite{baer1998heat}, viscous fluid dynamics \cite{gassner2008discontinuous,GASSNER20071049}, and resistive magnetohydrodynamics \cite{dumbser2009high}. Diverse subscale physical systems further involve nonlinear parabolic or coupled hyperbolic–parabolic PDEs, exemplified by detonation shock dynamics \cite{aslam1996level}, flame propagation \cite{markstein2014nonsteady}, Stefan problems \cite{rubinshteuin1971stefan}, and anisotropic thermal conduction \cite{balsara2008simulating}. Parabolic formulations also underpin mathematical techniques in level set methods, image processing, and computational geometry \cite{osher2004level,sethian1999level}, demonstrating broad interdisciplinary relevance.
 
One-step methods advance solutions directly from $t^n\rightarrow t^{n+1}$ in a single computational step, circumventing the storage overhead and startup complexities inherent in multi-step formulations. For high-order one-step schemes, we typically extend them based on the Runge-Kutta (RK) methods which constitute a cornerstone of one step numerical integration for ordinary differential equations (ODEs) and semi-discretized partial differential equations (PDEs). Renowned for their high-order accuracy, self-starting capability, and flexibility in handling stiffness, RK methods have evolved from their classical 4th-order formulations to sophisticated high-order ($q\geq5$) and adaptive implementations. Constructing exponential Runge-Kutta methods of collocation type and analyzing their convergence properties for linear and semilinear parabolic problems in \cite{hochbruck2005exponential}. The high-order MBP-preserving time integration scheme is studied using the integrating factor Runge-Kutta (IFRK) method \cite{ju2021maximum}. Extrapolated Stabilized Explicit Runge-Kutta methods (ESERK) are proposed  in \cite{martin2016extrapolated} to solve multi-dimensional nonlinear partial differential equations. Building temporally first- and second-order accurate super-time-stepping methods in \cite{meyer2014stabilized} around the recursion relation associated with Legendre polynomials.

In \cite{huang2024new}, Huang and Shen developed a series of high order implicit-explicit schemes by considering generalized backward differentiation formulas (GBDF) which incorporating an adaptive parameter $\beta$ that enables judicious balancing of stability domain extension and truncation error control. The GBDF methodology employs temporal Taylor expansions centered at the shifted time node $t^{n+\beta}$, where the adaptive control parameter $\beta\geq1$ governs scheme characteristics. It is well known that the BDF schemes are typical multi-step methods. Multi-step methods like BDF exhibit inherent disadvantages relative to one-step methods in practical implementations. Firstly, they are not self-starting, requiring auxiliary methods (typically one-step schemes) to compute the necessary initial steps, increasing complexity. Secondly, adaptive step-size control presents significant challenges; changing the step-size necessitates recalculating or interpolating historical solution values to maintain consistency, introducing computational overhead and potential stability errors. Thirdly, memory requirements scale with the step number $(k)$, posing storage bottlenecks for large-scale systems—unlike low-storage one-step methods. Additionally, achieving high-order convergence while retaining strong stability (e.g., A-stability) is fundamentally constrained. Finally, multi-step methods are more sensitive to solution discontinuities due to their dependence on historical data continuity, whereas one-step methods rely only on local state information, enhancing robustness for non-smooth problems.

The core idea of this paper is to combine several GBDF schemes with different $\beta$ to construct a new class of high-order one-step schemes. We consider in this paper numerical approximations of a general linear ordinary or partial differential equations given by
\begin{equation}\label{eq1}
\begin{aligned}
&u_t+\mathcal Lu(\mathbf{x},t)=f(\mathbf{x},t),\;0<t<T,\ \mathbf{x}\in\Omega. \\
&u(0)=u_0,
\end{aligned}
\end{equation}
where $\mathcal L$ is a linear operator, which exact descriptions can be found in the next section.

The key point of the one-step method we proposed is to introduce multiple intermediate layers $u^{n+\frac1k},\;\cdots,\;u^{n+\frac{k-1}{k}}$ between $u^n$ and $u^{n+1}$, and then use Taylor expansion to derive an expression for $u^{n+\beta}$ and $u_t^{n+\beta}$. It is obvious that for each $\beta$ provided, an equation will be obtained. This means that if we need k different $\beta$ values, we can obtain a system of equations that can be solved for $u^{n+1}$.

The main contributions of this work are as follows:
\begin{itemize}

	\item we propose a series of one-step schemes of arbitrary high order accuracy for parabolic-type equations;
	
	\item these one-step schemes are A-stable or even L-stable by choosing appropriate parameters which enjoy much more robust and can use much larger
time steps;

	\item we provide two different techniques to construct the one-step high-order schemes: the first one is by choosing different parameters $\beta_i$, and the second one is by increasing the number of intermediate layers.

\end{itemize}
Furthermore, we provide convincing numerical evidence to validate our theoretical findings.

The rest of this paper is organized as follows. In Section 2, we describe the abstract setting and construct the GBDF schemes based on the Taylor expansion at time $t^{n+\beta}$ and briefly recalled the Runge-Kutta scheme. In Section 3, we introduce two different techniques to construct high-order one-step schemes. In Section 4, we investigate their stability regions and give detailed conditions to construct A-stable and L-stable one-step schemes. In Section 5, we provide numerical examples to show the advantages of the new schemes, followed by some concluding remarks in Section 6.

\section{A brief review of GBDF scheme and Runge-Kutta scheme}
The GBDF and Runge-Kutta methods are two typical high-order discretization methods. In order to motivate our new schemes, we briefly review below the GBDF and Runge-Kutta methods. To establish rigorous mathematical foundations, we first formalize the discrete temporal framework: we divide interval [0,T] into N equal parts, with each small interval having a length of $\Delta t=\frac TN$. Subsequently, we proceed to define $t^n=n\Delta t$. The function space is endowed with the canonical $L^2(\Omega)$ inner product $(u,v):=\int_\Omega u(x)v(x)\text dx$ and its induced energy norm $\Vert u\Vert=(u,u)^{\frac12}$. Furthermore we introduce the operator-dependent seminorm $\vert u\vert:=\Vert\mathcal L^{\frac12}u\Vert=(\mathcal Lu,u)^{\frac12}$ to characterize solutions' regularity in the energy space. Building upon this notational scaffolding, we concisely revist the GBDF schemes.

\subsection{The GBDF scheme}

Given an integer $k\geq2$, it follows from the Taylor expansion at time $t^{n+\beta}$ that
\begin{equation}
u(\mathbf{x},t^{n+1-i})=\sum_{j=0}^{k-1}[(1-i-\beta)\Delta t]^j\frac{u^{(j)}(\mathbf{x},t^{n+\beta})}{j!}+\mathcal O(\Delta t^k),\,\forall i\leq k-1,
\end{equation}
where $u^{(j)}(\mathbf{x},t^{n+\beta})=\frac{\partial^j}{\partial t^j}u(\mathbf{x},t^{n+\beta})$.

Then we can derive from the above an implicit difference formula to approximate $\partial_tu(\mathbf{x},t^{n+\beta})$
\begin{equation}
\label{2.2}
\frac{1}{\Delta t}\sum_{q=0}^{k}a_{k,q}(\beta)u(\mathbf{x},t^{n+1-k+q})=\partial_tu(\mathbf{x},t^{n+\beta})+\mathcal O(\Delta t^k),
\end{equation}
where $a_{k,q}(\beta)$ can be uniquely determined by solving the following linear system with a Vandermonde matrix:
\begin{equation}
\begin{bmatrix}
1 & 1 & \cdots & 1 \\
\beta-1 & \beta & \cdots & \beta+k-1 \\
(\beta-1)^2 & \beta^2 & \cdots & (\beta+k-1)^2 \\
\vdots & \vdots & \ddots & \vdots \\
(\beta-1)^{k} & \beta^{k} & \cdots & (\beta+k-1)^{k}
\end{bmatrix}
\begin{bmatrix}
a_{k,k}(\beta) \\
a_{k,k-1}(\beta) \\
a_{k,k-2}(\beta) \\
\vdots \\
a_{k,0}(\beta) \\
\end{bmatrix}
=
\begin{bmatrix}
0 \\
-1 \\
0 \\
\vdots \\
0
\end{bmatrix}
.
\end{equation}

Similarly, we can derive an implicit difference formula to approximate $u(\mathbf{x},t^{n+\beta})$,
\begin{equation}
\label{2.3}
\sum_{q=0}^{k-1}b_{k,q}(\beta)u(\mathbf{x},t^{n+2-k+q})=u(\mathbf{x},t^{n+\beta})+\mathcal O(\Delta t^k),
\end{equation}
with $b_{k,q}$ being the unique solution of the following Vandermonde system:
\begin{equation}
\nonumber
\begin{bmatrix}
1 & 1 & \cdots & 1 \\
\beta-1 & \beta & \cdots & \beta+k-2 \\
(\beta-1)^2 & \beta^2 & \cdots & (\beta+k-2)^2 \\
\vdots & \vdots & \ddots & \vdots \\
(\beta-1)^{k-1} & \beta^{k-1} & \cdots & (\beta+k-2)^{k-1}
\end{bmatrix}
\begin{bmatrix}
b_{k,k-1}(\beta) \\
b_{k,k-2}(\beta) \\
b_{k,k-3}(\beta) \\
\vdots \\
b_{k,0}(\beta) \\
\end{bmatrix}
=
\begin{bmatrix}
1 \\
0 \\
0 \\
\vdots \\
0
\end{bmatrix}
.
\end{equation}

Then, based on (\ref{2.2}) and (\ref{2.3}), the GBDF scheme for (\ref{eq1}) is
\begin{equation}
\label{2.4}
\frac{1}{\Delta t}\sum_{q=0}^{k}a_{k,q}(\beta)u^{n+1-k+q}+\mathcal L\left[\sum_{q=0}^{k-1}b_{k,q}(\beta)u^{n+2-k+q}\right]=f(\mathbf{x},t^{n+\beta}),\,\,k\geq2.
\end{equation}

For the reader's convenience, we list below the coefficients in (\ref{2.4}) for $k=2,3,4$.\\

$k=2$:

\begin{equation}
\nonumber
\begin{aligned}
&a_{2,2}(\beta)=\frac{2\beta+1}{2},\;a_{2,1}(\beta)=-2\beta,\;a_{2,0}(\beta)=\frac{2\beta-1}{2}, \\
&b_{2,1}(\beta)=\beta,\;b_{2,0}(\beta)=-(\beta-1).
\end{aligned}
\end{equation}

$k=3$:

\begin{equation}
\nonumber
\begin{aligned}
&a_{3,3}(\beta)=\frac{3\beta^2+6\beta+2}{6},\;a_{3,2}(\beta)=-\frac{9\beta^2+12\beta-3}{6},\\
&a_{3,1}(\beta)=\frac{9\beta^2+6\beta-6}{6},\;a_{3,0}(\beta)=-\frac{3\beta^2-1}{6},\\
&b_{3,2}(\beta)=\frac{\beta^2+\beta}{2},\;b_{3,1}(\beta)=-(\beta^2-1),\;b_{3,0}(\beta)=\frac{\beta^2-\beta}{2}.
\end{aligned}
\end{equation}

$k=4$:

\begin{equation}
\nonumber
\begin{aligned}
&a_{4,4}(\beta)=\frac{2\beta^3+9\beta^2+11\beta+3}{12},\;a_{4,3}(\beta)=-\frac{8\beta^3+30\beta^2+20\beta-10}{12},\\
&a_{4,2}(\beta)=\frac{12\beta^3+36\beta^2+6\beta-18}{12},\;a_{4,1}(\beta)=-\frac{8\beta^3+18\beta^2-4\beta-6}{12},\\
&a_{4,0}(\beta)=\frac{2\beta^3+3\beta^2-\beta-1}{12},\\
&b_{4,3}(\beta)=\frac{\beta^3+3\beta^2+2\beta}{6},\;b_{4,2}(\beta)=-\frac{\beta^3+2\beta^2-\beta-2}{2},\\
&b_{4,1}(\beta)=\frac{\beta^3+\beta^2-2\beta}{2},\;b_{4,0}(\beta)=-\frac{\beta^3-\beta}{6}.
\end{aligned}
\end{equation}

\begin{remark} 
When $\beta=1$, equation (\ref{2.4}) becomes the classical semi-implicit BDF scheme, and there have been extensive works regarding its stability and error analysis \cite{akrivis2016backward,akrivis2015fully,li2020long} in the literature.
\end{remark}

\subsection{Runge-Kutta scheme}

Let $s$ be an integer (the "number of stages") and $c_{2,1},\;c_{3,1},\;c_{3,2},\;\cdots,\;c_{s,1},\;\cdots,\;c_{s,s-1}$, $d_1,\;\cdots,\;d_s$ and $e_2,\;\cdots,\;e_s$ be real coefficients. Then the method 
\begin{equation}
\label{RK_scheme}
\begin{aligned}
k_1&=f(\mathbf{x},t^n)-\mathcal Lu^n, \\
k_2&=f(\mathbf{x},t^n+e_2\Delta t)-\mathcal L(u^n+\Delta tc_{2,1}k_1), \\
k_3&=f(\mathbf{x},t^n+e_3\Delta t)-\mathcal L(u^n+\Delta t(c_{3,1}k_1+c_{3,2}k_2)), \\
&\cdots \\
k_s&=f(\mathbf{x},t^n+e_s\Delta t)-\mathcal L(u^n+\Delta t(c_{s,1}k_1+\cdots+c_{s,s-1}k_{s-1})), \\
u^{n+1}&=u^n+\Delta t(d_1k_1+\cdots+d_sk_s)
\end{aligned}
\end{equation}
is called an $s$-stage explicit Runge-Kutta method (ERK) for system (\ref{eq1}).

Usually, the $e_i$ satisfies the following conditions
\begin{equation}
e_i=\sum_{j=1}^{i-1}c_{i,j}.
\end{equation}

By adding some constraints on the parameter $c,\;d$ and $e$, we can obtain the commonly seen Runge-Kutta schemes. For instance, the second-order Runge-Kutta method needs to satisfy:
\begin{equation}
d_1+d_2=1,\;d_2e_2=\frac12,
\end{equation}
and the third-order Runge-Kutta method also needs to satisfy:
\begin{equation}
\begin{aligned}
&d_1+d_2+d_3=1,\;d_2e_2+d_3e_3=\frac12,\\
&d_2e_2^2+d_3e_3^2=\frac13,\;c_{3,2}d_3e_2=\frac16.
\end{aligned}
\end{equation}

An extensive introduction, investigation and discussion of numerical methods for ODEs can be found in the text books by Hairer, Norsett and Wanner \cite{hairer1993sp} and Hairer and Wanner \cite{wanner1996solving}.

\section{The new one-step schemes}
In this section, we will provide two different techniques to construct the one-step high-order schemes: the first one is by choosing different parameters $\beta_i$, and the second one is by increasing the number of intermediate layers. We can either use them individually or combine them to construct a series of high-order one-step schemes.

\subsection{The first high-order one-step scheme}
Without loss of generality, we use $u(t)$ to denote $u(\mathbf{x},t)$ and consider to construct $k$-th order ($2\leq k\leq4$) one-step schemes by introducing one intermediate layer and different $\beta_1$ and $\beta_2$. We shall first construct a class of new one-step schemes for system (\ref{eq1}) based on the Taylor expansion at time $t^{n+\beta}$. We can effortlessly obtain the taylor expansions of $u(t^{n+1})$ at time $t^{n+\beta}$, that is
\begin{equation}
\label{taylor}
\begin{aligned}
u(t^{n+1})=&u(t^{n+\beta})-u_t(t^{n+\beta})(\beta-1)\Delta t+u^{(2)}(t^{n+\beta})\frac{(\beta-1)^2\Delta t^2}{2} \\
&-u^{(3)}(t^{n+\beta})\frac{(\beta-1)^3\Delta t^3}{6}+u^{(4)}(t^{n+\beta})\frac{(\beta-1)^4\Delta t^4}{24}+C\Delta t^5.
\end{aligned}
\end{equation}
Similarly, we can also obtain the following taylor expansions of $u(t^{n}),\;u(t^{n+\frac12})$ at time $t^{n+\beta}$, those are
\begin{equation}
\begin{aligned}
u(t^n)=&u(t^{n+\beta})-u_t(t^{n+\beta})\beta\Delta t+u^{(2)}(t^{n+\beta})\frac{\beta^2\Delta t^2}{2} \\
&-u^{(3)}(t^{n+\beta})\frac{\beta^3\Delta t^3}{6}+u^{(4)}(t^{n+\beta})\frac{\beta^4\Delta t^4}{24}+C\Delta t^5, \\
u(t^{n+\frac12})=&u(t^{n+\beta})-u_t(t^{n+\beta})(\beta-\frac12)\Delta t+u^{(2)}(t^{n+\beta})\frac{(\beta-\frac12)^2\Delta t^2}{2} \\
&-u^{(3)}(t^{n+\beta})\frac{(\beta-\frac12)^3\Delta t^3}{6}+u^{(4)}(t^{n+\beta})\frac{(\beta-\frac12)^4\Delta t^4}{24}+C\Delta t^5.
\end{aligned}
\end{equation}
Subsequently, we can identify a set of coefficients $A_{2,1}(\beta)$, $A_{2,2}(\beta)$, and $A_{2,3}(\beta)$ such that the following equation holds true.
\begin{equation}
\label{eq2}
\begin{aligned}
&A_{2,1}(\beta)u(t^{n})+A_{2,2}(\beta)u(t^{n+\frac12})+A_{2,3}(\beta)u(t^{n+1}) \\
&=u_t(t^{n+\beta})\Delta t+C_{2,1}(n,\beta)\Delta t^3+C_{2,2}(n,\beta)\Delta t^4+C\Delta t^5,
\end{aligned}
\end{equation}
where the coefficients $A_{2,1}(\beta)$, $A_{2,2}(\beta)$, and $A_{2,3}(\beta)$ satisfy the following
\begin{equation}
\begin{bmatrix}
1 & 1 & 1 \\
\beta & \beta-\frac12 & \beta-1 \\
\beta^2 & (\beta-\frac12)^2 & (\beta-1)^2
\end{bmatrix}
\begin{bmatrix}
A_{2,1}(\beta) \\
A_{2,2}(\beta) \\
A_{2,3}(\beta)
\end{bmatrix}
=
\begin{bmatrix}
0 \\
-1 \\
0
\end{bmatrix}
.
\end{equation}
Considering that the above coefficient matrix is a Vandermonde matrix and $\beta\neq\beta-\frac12\neq\beta-1$ for any $\beta$, therefore its solution must be unique and exist. Upon completing the above matrix calculation, we can obtain
\begin{equation}
\label{A value}
\begin{aligned}
&A_{2,1}(\beta)=4\beta-3,\;A_{2,2}(\beta)=-8\beta+4,\;A_{2,3}(\beta)=4\beta-1,\\
&C_{2,1}(n,\beta)=\left(-\frac{\beta^2}2+\frac{\beta}2-\frac1{12}\right)u^{(3)}(t^{n+\beta}),\\
&C_{2,2}(n,\beta)=\left(\frac{\beta^3}3-\frac{\beta^2}2+\frac{11\beta}{48}-\frac{1}{32}\right)u^{(4)}(t^{n+\beta}).
\end{aligned}
\end{equation}

Similarly, we can also make the following equation hold true by modifying the coefficients of $u(t^{n})$, $u(t^{n+\frac12})$ and $u(t^{n+1})$:
\begin{equation}\label{eq3}
\begin{aligned}
&B_{2,1}(\beta)u(t^{n})+B_{2,2}(\beta)u(t^{n+\frac12})+B_{2,3}(\beta)u(t^{n+1}) \\
&=u(t^{n+\beta})+D_{2,1}(n,\beta)\Delta t^3+C\Delta t^4,
\end{aligned}
\end{equation}
where the coefficients $B_{2,1}(\beta)$, $B_{2,2}(\beta)$, and $B_{2,3}(\beta)$ satisfy 
\begin{equation}
\begin{bmatrix}
1 & 1 & 1 \\
\beta & \beta-\frac12 & \beta-1 \\
\beta^2 & (\beta-\frac12)^2 & (\beta-1)^2
\end{bmatrix}
\begin{bmatrix}
B_{2,1}(\beta) \\
B_{2,2}(\beta) \\
B_{2,3}(\beta)
\end{bmatrix}
=
\begin{bmatrix}
1 \\
0 \\
0
\end{bmatrix}
,
\end{equation}
and via matrix computation, one can derive
\begin{equation}
\label{A value}
\begin{aligned}
&B_{2,1}(\beta)=2\beta^2-3\beta+1,\;B_{2,2}(\beta)=-4\beta^2+4\beta,\;B_{2,3}(\beta)=2\beta^2-\beta,\\
&D_{2,1}(n,\beta)=\left(-\frac{\beta^3}6+\frac{\beta^2}4-\frac{\beta}{12}\right)u^{(3)}(t^{n+\beta}).
\end{aligned}
\end{equation}

Considering that $u(t^{n+\beta})$ is the exact solution of (\ref{eq1}) at time $t^{n+\beta}$, i.e.,
\begin{equation}\label{equal-system}
u_t(t^{n+\beta})+\mathcal Lu(t^{n+\beta})=f(t^{n+\beta}).
\end{equation}

By substituting equations (\ref{eq2}) and (\ref{eq3}) into above equation \eqref{equal-system}, we can obtain the following implicit difference formula
\begin{equation}\label{one-step-eq1}
\begin{aligned}
&\frac{1}{\Delta t}\left[A_{2,1}(\beta)u(t^{n})+A_{2,2}(\beta)u(t^{n+\frac12})+A_{2,3}(\beta)u(t^{n+1})\right] \\
&+\mathcal L\left[B_{2,1}(\beta)u(t^{n})+B_{2,2}(\beta)u(t^{n+\frac12})+B_{2,3}(\beta)u(t^{n+1})\right] \\
&=f(t^{n+\beta})+C_{2,1}(n,\beta)\Delta t^2+\left[C_{2,2}(n,\beta)+\mathcal LD_{2,1}(n,\beta)\right]\Delta t^3+C\Delta t^4
\end{aligned}
\end{equation}
Next, we'll introduce some symbols of operators to make our subsequent writing easier.
\begin{equation}
\begin{aligned}
&\mathcal{A}_\beta=\frac1{\Delta t}A_{2,2}(\beta)\mathcal I+B_{2,2}(\beta)\mathcal L,\;\mathcal{B}_\beta=\frac1{\Delta t}A_{2,3}(\beta)\mathcal I+B_{2,3}(\beta)\mathcal L, \\
&\mathcal{C}_\beta=\frac1{\Delta t}A_{2,1}(\beta)\mathcal I+B_{2,1}(\beta)\mathcal L.
\end{aligned}
\end{equation}
Then the equation \eqref{one-step-eq1} can be rewrite as the following formulation:
\begin{equation}\label{one-step-eq2}
\begin{aligned}
&\mathcal A_\beta u(t^{n+\frac12})+\mathcal B_\beta u(t^{n+1})+\mathcal C_\beta u(t^n)\\
&=f(t^{n+\beta})+C_{2,1}(n,\beta)\Delta t^2+\left[C_{2,2}(n,\beta)+\mathcal LD_{2,1}(n,\beta)\right]\Delta t^3+C\Delta t^4.
\end{aligned}
\end{equation}

Considering two distinct scalars, $\beta=\beta_1$ and $\beta=\beta_2$ ($\beta_1\neq\beta_2$) for above equation \eqref{one-step-eq2}, in the real number domain. We can obtain the following two equations:
\begin{equation}\label{one-step-eq3}
\begin{aligned}
&\mathcal{A}_{\beta_1}u(t^{n+\frac12})+\mathcal B_{\beta_1}u(t^{n+1})+\mathcal C_{\beta_1}u(t^n) \\
&=f(t^{n+\beta_1})+C_{2,1}(n,\beta_1)\Delta t^2+[C_{2,2}(n,\beta_1)+\mathcal LD_{2,1}(n,\beta_1)]\Delta t^3+C\Delta t^4.
\end{aligned}
\end{equation}
and
\begin{equation}\label{one-step-eq4}
\begin{aligned}
&\mathcal A_{\beta_2}u(t^{n+\frac12})+\mathcal B_{\beta_2}u(t^{n+1})+\mathcal C_{\beta_2}u(t^n) \\
&=f(t^{n+\beta_2})+C_{2,1}(n,\beta_2)\Delta t^2+[C_{2,2}(n,\beta_2)+\mathcal LD_{2,1}(n,\beta_2)]\Delta t^3+C\Delta t^4.
\end{aligned}
\end{equation}
Multiplying the equation \eqref{one-step-eq3} by $\mathcal A_{\beta_2}$, equation \eqref{one-step-eq4} by $-\mathcal A_{\beta_1}$ and then add them together to eliminate the unknown term $u(t^{n+\frac12})$, then we can obtain the following equation:
\begin{equation}\label{eq4}
\begin{aligned}
&\left(\mathcal A_{\beta_2}\mathcal B_{\beta_1}-\mathcal A_{\beta_1}\mathcal B_{\beta_2}\right)u(t^{n+1}) \\
=&\left(\mathcal A_{\beta_1}\mathcal C_{\beta_2}-\mathcal A_{\beta_2}\mathcal C_{\beta_1}\right)u(t^n)+\mathcal A_{\beta_2}f(t^{n+\beta_1})-\mathcal A_{\beta_1}f(t^{n+\beta_2})\\
&+[\mathcal A_{\beta_2}C_{2,1}(n,\beta_1)-\mathcal A_{\beta_1}C_{2,1}(n,\beta_2)]\Delta t^2 \\
&+\left\{\mathcal A_{\beta_2}[C_{2,2}(n,\beta_1)+\mathcal LD_{2,1}(n,\beta_1)]-\mathcal A_{\beta_1}[C_{2,2}(n,\beta_2)+\mathcal LD_{2,1}(n,\beta_2)]\right\}\Delta t^3\\
&+C\Delta t^4.
\end{aligned}
\end{equation}
Here $\mathcal A_{\beta_2}\mathcal B_{\beta_1}-\mathcal A_{\beta_1}\mathcal B_{\beta_2}$ is obviously an invertible operator because of
\begin{equation}
\begin{aligned}
\mathcal A_{\beta_2}\mathcal B_{\beta_1}-\mathcal A_{\beta_1}\mathcal B_{\beta_2}=\frac{4(\beta_1-\beta_2)[\beta_1\beta_2\Delta t^2\mathcal L^2+(\beta_1+\beta_2)\Delta t\mathcal L+2\mathcal I]}{\Delta t^2}.
\end{aligned}
\end{equation}
Then, a new class of high-order one-step schemes for system \eqref{eq1} are
\begin{equation}\label{eq5}
\begin{aligned}
[\mathcal A_{\beta_2}\mathcal B_{\beta_1}-\mathcal A_{\beta_1}\mathcal B_{\beta_2}]u^{n+1}=
&[\mathcal A_{\beta_1}\mathcal C_{\beta_2}-\mathcal A_{\beta_2}\mathcal C_{\beta_1}]u^n+\mathcal A_{\beta_2}f(t^{n+\beta_1})-\mathcal A_{\beta_1}f(t^{n+\beta_2}).
\end{aligned}
\end{equation}

Next, we will discuss the truncation errors of above one-step schemes. Define the following truncation errors as
\begin{equation}
E^{n+1}=u(t^{n+1})-\mathcal M_{\beta_1,\beta_2}u(t^{n+1}),
\end{equation}
where
\begin{equation}
\begin{aligned}
\mathcal M_{\beta_1,\beta_2}u(t^{n+1})=
&2\mathcal F_{\beta_1,\beta_2}u(t^{n})+(\beta_1+\beta_2-2)\Delta t\mathcal L\mathcal F_{\beta_1,\beta_2}u(t^{n})\\
&+(\beta_1\beta_2-\beta_1-\beta_2+1)\Delta t^2\mathcal L^2\mathcal F_{\beta_1,\beta_2}u(t^{n})\\
&+\mathcal G_{\beta_1,\beta_2}\Delta tf^{n+\beta_1}+\mathcal G_{\beta_2,\beta_1}\Delta tf^{n+\beta_2}.
\end{aligned}
\end{equation}
Here the operators $\mathcal F_{\beta_1,\beta_2}$ and $\mathcal G_{\beta_1,\beta_2}$ are defined by the following
\begin{equation}
\begin{aligned}
\mathcal F_{\beta_1,\beta_2}=&[\beta_1\beta_2\Delta t^2\mathcal L^2+(\beta_1+\beta_2)\Delta t\mathcal L+2\mathcal I]^{-1}, \\
\mathcal G_{\beta_1,\beta_2}=&\frac{(2\beta_2-1)\mathcal I+\beta_2(\beta_2-1)\Delta t\mathcal L}{\beta_1-\beta_2}\mathcal F_{\beta_1,\beta_2}.
\end{aligned}
\end{equation}

Combining the equation \eqref{eq4} with \eqref{eq5}, we are easy to obtain a detailed expression of $E^{n+1}$: 
\begin{equation}\label{expandeq}
\begin{aligned}
u(t^{n+1})
&=\mathcal M_{\beta_1,\beta_2}u(t^{n+1})+E^{n+1}\\
&=\mathcal M_{\beta_1,\beta_2}u(t^{n+1})+\frac{6\beta_1^2-6\beta_1+1}{12}\mathcal G_{\beta_1,\beta_2}u^{(3)}(t^{n+\beta_1})\Delta t^3 \\
&\quad+\frac{6\beta_2^2-6\beta_2+1}{12}\mathcal G_{\beta_2,\beta_1}u^{(3)}(t^{n+\beta_2})\Delta t^3 \\
&\quad-\frac{32\beta_1^3-48\beta_1^2+22\beta_1-3}{96}\mathcal G_{\beta_1,\beta_2}u^{(4)}(t^{n+\beta_1})\Delta t^4 \\
&\quad-\frac{32\beta_2^3-48\beta_2^2+22\beta_2-3}{96}\mathcal G_{\beta_2,\beta_1}u^{(4)}(t^{n+\beta_2})\Delta t^4 \\
&\quad+\frac{2\beta_1^3-3\beta_1^2+\beta_1}{12}\mathcal G_{\beta_1,\beta_2}\mathcal Lu^{(3)}(t^{n+\beta_1})\Delta t^4 \\
&\quad+\frac{2\beta_2^3-3\beta_2^2+\beta_2}{12}\mathcal G_{\beta_2,\beta_1}\mathcal Lu^{(3)}(t^{n+\beta_2})\Delta t^4+C\Delta t^5.
\end{aligned}
\end{equation}

To facilitate the representation and subsequent analysis, we introduce the following six operators:   
\begin{equation}
\begin{aligned}
&\mathcal H_{1,\beta_1,\beta_2}=&\frac{6\beta_1^2-6\beta_1+1}{12}\mathcal G_{\beta_1,\beta_2},\quad\mathcal H_{2,\beta_1,\beta_2}&=\frac{6\beta_2^2-6\beta_2+1}{12}\mathcal G_{\beta_2,\beta_1}, \\
&\mathcal H_{3,\beta_1,\beta_2}=&-\frac{32\beta_1^3-48\beta_1^2+22\beta_1-3}{96}\mathcal G_{\beta_1,\beta_2},& \\
&\mathcal H_{4,\beta_1,\beta_2}=&-\frac{32\beta_2^3-48\beta_2^2+22\beta_2-3}{96}\mathcal G_{\beta_2,\beta_1},& \\
&\mathcal H_{5,\beta_1,\beta_2}=&\frac{2\beta_1^3-3\beta_1^2+\beta_1}{12}\mathcal G_{\beta_1,\beta_2},\quad\mathcal H_{6,\beta_1,\beta_2}&=\frac{2\beta_2^3-3\beta_2^2+\beta_2}{12}\mathcal G_{\beta_2,\beta_1}.
\end{aligned}
\end{equation}

Obviously, if we do not add any restrictions on the parameter $\beta_1$ and $\beta_2$, $E^{n+1}$ will satisfy the following
\begin{equation}
E^{n+1}=\mathcal O(\Delta t^3).
\end{equation}
Nevertheless, in actuality, when $\beta_1$ and $\beta_2$ meet specific criteria, we are able to achieve a more favorable truncation error.
\begin{theorem}\label{thm2}
If $\beta_1$ and $\beta_2$ satisfy $3(2\beta_1-1)(2\beta_2-1)=-1$ and $|\beta_2-\beta_1|\ll N$, we can obtain
\begin{equation}
E^{n+1}=\mathcal O(\Delta t^4).
\end{equation}
Specially, if we set $\beta_1=\frac{3+\sqrt3}{6},$ and $\beta_2=\frac{3-\sqrt3}{6}$ (or $\beta_1=\frac{3-\sqrt3}{6},$ and $\beta_2=\frac{3+\sqrt3}{6}$), we can obtain
\begin{equation}
E^{n+1}=\mathcal O(\Delta t^5).
\end{equation}

\end{theorem}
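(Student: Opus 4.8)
The plan is to work directly from the expansion \eqref{expandeq} and track which powers of $\Delta t$ survive in $E^{n+1}$. Collecting terms, $E^{n+1}$ consists of a $\Delta t^3$ part, $\mathcal H_{1,\beta_1,\beta_2}u^{(3)}(t^{n+\beta_1})\Delta t^3+\mathcal H_{2,\beta_1,\beta_2}u^{(3)}(t^{n+\beta_2})\Delta t^3$, four $\Delta t^4$ terms, and a $C\Delta t^5$ remainder. Since every $\mathcal H_{i,\beta_1,\beta_2}$ is a bounded function of $\mathcal L$ for fixed distinct $\beta_1,\beta_2$, the $\Delta t^4$ and $\Delta t^5$ contributions are automatically $\mathcal O(\Delta t^4)$; hence the first assertion reduces to showing that the $\Delta t^3$ part is itself $\mathcal O(\Delta t^4)$, i.e.\ that the operator acting on the third derivatives is $\mathcal O(\Delta t)$.

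To see this I would first reduce the two evaluation points to one by Taylor expanding $u^{(3)}(t^{n+\beta_2})=u^{(3)}(t^{n+\beta_1})+(\beta_2-\beta_1)\Delta t\,u^{(4)}(t^{n+\beta_1})+\cdots$; the hypothesis $|\beta_2-\beta_1|\ll N$ guarantees $(\beta_2-\beta_1)\Delta t\to0$, so this expansion is legitimate and its remainder stays within the claimed order. Because $\mathcal H_{2,\beta_1,\beta_2}$ carries a factor $(\beta_1-\beta_2)^{-1}$ through $\mathcal G_{\beta_2,\beta_1}$, the correction term $\mathcal H_{2,\beta_1,\beta_2}(\beta_2-\beta_1)\Delta t\,u^{(4)}$ is $\mathcal O(\Delta t)$ and thus contributes only at order $\Delta t^4$. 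It then remains to evaluate $\mathcal H_{1,\beta_1,\beta_2}+\mathcal H_{2,\beta_1,\beta_2}$. Substituting the definitions of $\mathcal G_{\beta_1,\beta_2}$, $\mathcal G_{\beta_2,\beta_1}$ and using the symmetry $\mathcal F_{\beta_1,\beta_2}=\mathcal F_{\beta_2,\beta_1}$, a direct computation collapses the common factor $(\beta_1-\beta_2)^{-1}$ and yields
\[
\mathcal H_{1,\beta_1,\beta_2}+\mathcal H_{2,\beta_1,\beta_2}=\frac1{12}\Big[(12\beta_1\beta_2-6(\beta_1+\beta_2)+4)\mathcal I-(\beta_1+\beta_2-1)\Delta t\mathcal L\Big]\mathcal F_{\beta_1,\beta_2}.
\]
The condition $3(2\beta_1-1)(2\beta_2-1)=-1$ is exactly equivalent to $12\beta_1\beta_2-6(\beta_1+\beta_2)+4=0$, so the leading $\mathcal I$-coefficient disappears and only the $\Delta t\mathcal L\mathcal F_{\beta_1,\beta_2}$ term remains, which is $\mathcal O(\Delta t)$. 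Multiplying by $\Delta t^3$ gives $E^{n+1}=\mathcal O(\Delta t^4)$.

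For the sharpened estimate I would exploit that the special pair $\beta_1=\frac{3+\sqrt3}{6}$, $\beta_2=\frac{3-\sqrt3}{6}$ are precisely the two roots of $6\beta^2-6\beta+1=0$. Consequently $\mathcal H_{1,\beta_1,\beta_2}=\mathcal H_{2,\beta_1,\beta_2}=0$ identically, so the entire $\Delta t^3$ part and its Taylor correction vanish, not merely to leading order. What remains is to show the four $\Delta t^4$ terms combine to $\mathcal O(\Delta t^5)$, i.e.\ that the $\Delta t$-independent part of the $\Delta t^4$ bracket vanishes. Here I would again replace $t^{n+\beta_2}$ by $t^{n+\beta_1}$ up to $\mathcal O(\Delta t)$, use $\mathcal F_{\beta_1,\beta_2}=\frac12\mathcal I+\mathcal O(\Delta t)$ (so $\mathcal G_{\beta_1,\beta_2},\mathcal G_{\beta_2,\beta_1}\to-\tfrac12\mathcal I$ at these values), and separate the two independent structures $u^{(4)}(t^{n+\beta_1})$ and $\mathcal Lu^{(3)}(t^{n+\beta_1})$. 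Using the factorizations $32\beta^3-48\beta^2+22\beta-3=(2\beta-1)(4\beta-1)(4\beta-3)$ and $2\beta^3-3\beta^2+\beta=\beta(\beta-1)(2\beta-1)$ together with the symmetry $\beta_1+\beta_2=1$ of the special pair, the leading coefficients of both $u^{(4)}$ and $\mathcal Lu^{(3)}$ cancel between the $\beta_1$- and $\beta_2$-contributions, leaving an $\mathcal O(\Delta t)$ operator; multiplying by $\Delta t^4$ gives $E^{n+1}=\mathcal O(\Delta t^5)$.

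The main obstacle I anticipate is the operator algebra in combining $\mathcal H_{1,\beta_1,\beta_2}+\mathcal H_{2,\beta_1,\beta_2}$ and the analogous pairs at order $\Delta t^4$: one must keep the (mutually commuting) polynomial-in-$\mathcal L$ factors organized, correctly cancel the $(\beta_1-\beta_2)^{-1}$ singularity hidden in $\mathcal G$, and verify that the Taylor remainders from shifting $t^{n+\beta_2}$ to $t^{n+\beta_1}$ do not re-enter at the critical order, which is exactly where the assumption $|\beta_2-\beta_1|\ll N$ is used. Once the identity for $\mathcal H_{1,\beta_1,\beta_2}+\mathcal H_{2,\beta_1,\beta_2}$ is established, recognizing the stated condition as the vanishing of its $\mathcal I$-coefficient is immediate, and the special-value case follows at once from the root structure of $6\beta^2-6\beta+1$.
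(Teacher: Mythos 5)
Your proposal is correct and follows essentially the same route as the paper's proof: reduce to a single evaluation point via Taylor expansion using $|\beta_2-\beta_1|\ll N$, compute $\mathcal H_{1,\beta_1,\beta_2}+\mathcal H_{2,\beta_1,\beta_2}$ explicitly and recognize $3(2\beta_1-1)(2\beta_2-1)=-1$ as the vanishing of its $\mathcal I$-coefficient, and for the special pair use the roots of $6\beta^2-6\beta+1$ to annihilate $\mathcal H_1,\mathcal H_2$ and then check that the remaining $\Delta t^4$ pairs combine to $\mathcal O(\Delta t^5)$. Your identity for $\mathcal H_1+\mathcal H_2$ agrees with the paper's (since $3(2\beta_1-1)(2\beta_2-1)+1=12\beta_1\beta_2-6(\beta_1+\beta_2)+4$), and your factorization/symmetry argument for the $\Delta t^4$ cancellation reproduces what the paper obtains by direct computation.
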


\begin{proof}
Based on the Taylor expansion, if $|\beta_2-\beta_1|\ll N$, we have
\begin{equation}
u^{(3)}(t^{n+\beta_2})=u^{(3)}(t^{n+\beta_1})+C\Delta t,\;u^{(4)}(t^{n+\beta_2})=u^{(4)}(t^{n+\beta_1})+C\Delta t.
\end{equation}
If $\beta_1$ and $\beta_2$ satisfy $3(2\beta_1-1)(2\beta_2-1)=-1$, the sum of the first two terms of the truncation error $E^{n+1}$ satisfies
\begin{equation}
\begin{aligned}
&\mathcal H_{1,\beta_1,\beta_2}u^{(3)}(t^{n+\beta_1})\Delta t^3+\mathcal H_{2,\beta_1,\beta_2}u^{(3)}(t^{n+\beta_2})\Delta t^3 \\
=&\left(\mathcal H_{1,\beta_1,\beta_2}+\mathcal H_{2,\beta_1,\beta_2}\right)u^{(3)}(t^{n+\beta_1})\Delta t^3+C\Delta t^4 \\
=&\frac{\mathcal F_{\beta_1,\beta_2}}{12(\beta_1-\beta_2)}\left((6\beta_1^2-6\beta_1+1)(2\beta_2-1)-(6\beta_2^2-6\beta_2+1)(2\beta_1-1)\right)u^{(3)}(t^{n+\beta_1})\Delta t^3 \\
&+\frac{\mathcal F_{\beta_1,\beta_2}}{12(\beta_1-\beta_2)}\left((6\beta_1^2-6\beta_1+1)\beta_2(\beta_2-1)-(6\beta_2^2-6\beta_2+1)\beta_1(\beta_1-1)\right) \\
&\times\mathcal Lu^{(3)}(t^{n+\beta_1})\Delta t^4+C\Delta t^4 \\
=&\frac{\mathcal F_{\beta_1,\beta_2}}{12}\left(\left(3(2\beta_1-1)(2\beta_2-1)+1\right)\mathcal I+(1-\beta_1-\beta_2)\Delta t\mathcal L\right)u^{(3)}(t^{n+\beta_1})\Delta t^3+C\Delta t^4 \\
=&\frac{\mathcal F_{\beta_1,\beta_2}}{12}(1-\beta_1-\beta_2)\mathcal Lu^{(3)}(t^{n+\beta_1})\Delta t^4+C\Delta t^4,
\end{aligned}
\end{equation}
which means
\begin{equation}
E^{n+1}=\mathcal O(\Delta t^4).
\end{equation}
For example, we can choose $\beta_1=1$ and $\beta_2=\frac13$ or $\beta_1=0$ and $\beta_2=\frac23$.

Without loss of generality, we set $\beta_1=\frac{3+\sqrt3}{6},$ and $\beta_2=\frac{3-\sqrt3}{6}$ to satisfy $6\beta_1^2-6\beta_1+1=0$ and $6\beta_2^2-6\beta_2+1=0$. It leads to 
\begin{equation}\label{one-step-eq5}
\mathcal H_{1,\beta_1,\beta_2}=0,\;\mathcal H_{2,\beta_1,\beta_2}=0.
\end{equation}
Then the sum of the third and fourth terms of the truncation error $E^{n+1}$ satisfies
\begin{equation}\label{one-step-eq6}
\begin{aligned}
&\mathcal H_{3,\beta_1,\beta_2}u^{(4)}(t^{n+\beta_1})\Delta t^4+\mathcal H_{4,\beta_1,\beta_2}u^{(4)}(t^{n+\beta_2})\Delta t^4\\
&=(\mathcal H_{3,\beta_1,\beta_2}+\mathcal H_{4,\beta_1,\beta_2})u^{(4)}(t^{n+\beta_1})\Delta t^4+C\Delta t^5 \\
&=\frac1{864}(\frac16\Delta t^2\mathcal L^2+\Delta t\mathcal L+2\mathcal I)^{-1}\mathcal Lu^{(4)}(t^{n+\beta_1})\Delta t^5+C\Delta t^5,
\end{aligned}
\end{equation}
and the sum of the fifth and sixth terms of the truncation error $E^{n+1}$ satisfies
\begin{equation}\label{one-step-eq7}
\begin{aligned}
&\mathcal H_{5,\beta_1,\beta_2}\mathcal Lu^{(3)}(t^{n+\beta_1})\Delta t^4+\mathcal H_{6,\beta_1,\beta_2}\mathcal Lu^{(3)}(t^{n+\beta_2})\Delta t^4 \\
&=(\mathcal H_{5,\beta_1,\beta_2}+\mathcal H_{6,\beta_1,\beta_2})\mathcal Lu^{(3)}(t^{n+\beta_1})\Delta t^4+C\Delta t^5 \\
&=\frac1{162}(\frac16\Delta t^2\mathcal L^2+\Delta t\mathcal L+2\mathcal I)^{-1}\mathcal L^2u^{(3)}(t^{n+\beta_1})\Delta t^5+C\Delta t^5,
\end{aligned}
\end{equation}
Combining the equation \cref{one-step-eq5} with \cref{one-step-eq6}-\cref{one-step-eq7}, we are able to derive
\begin{equation}
E^{n+1}=\mathcal O(\Delta t^5).
\end{equation}
\end{proof}

Next, we will show that the proposed one-step scheme \eqref{eq5} degenerates to classical Runge-Kutta scheme when $\beta_1$ and $\beta_2$ assume specific values.
\begin{theorem}\label{thm1}
Under the condition of $\beta_1=0$ and arbitrary $\beta_2\neq\beta_1$, the proposed one-step scheme \eqref{eq5} will degenerate to a Runge-Kutta scheme with stability terms. Specially, if the parabolic system \eqref{eq1} assumes the following canonical form:
\begin{equation}\label{eq-system}
\begin{aligned}
&u_t=f(t,u),\;0<t<T,\\
&u(0)=u_0
\end{aligned}
\end{equation}
the proposed one-step scheme \eqref{eq5} at $\beta_1=0$ degenerates exactly to the classical RK2 scheme.
\end{theorem}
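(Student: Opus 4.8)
The plan is to exploit the special algebraic collapse that the value $\beta_1=0$ forces on the building-block operators, and then to recast scheme \eqref{eq5} into a normalized form in which an explicit RK2 skeleton and an implicit stabilizing factor appear as cleanly separated pieces. First I would evaluate the three operators at $\beta_1=0$. The decisive observation is that $B_{2,2}(\beta)=-4\beta^2+4\beta$ and $B_{2,3}(\beta)=2\beta^2-\beta$ both vanish at $\beta=0$, so that
\begin{equation}
\nonumber
\mathcal A_{0}=\frac{4}{\Delta t}\mathcal I,\qquad \mathcal B_{0}=-\frac{1}{\Delta t}\mathcal I,\qquad \mathcal C_{0}=-\frac{3}{\Delta t}\mathcal I+\mathcal L.
\end{equation}
Since $\mathcal A_{0}$ and $\mathcal B_{0}$ are now scalar multiples of the identity, every product in \eqref{eq5} built from $\mathcal A_{\beta_1}$ or $\mathcal B_{\beta_1}$ simplifies. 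Substituting the formulas for $A_{2,q}(\beta_2)$, $B_{2,q}(\beta_2)$ and collecting powers of $\mathcal L$, the left operator reduces (consistently with the identity already recorded for $\mathcal A_{\beta_2}\mathcal B_{\beta_1}-\mathcal A_{\beta_1}\mathcal B_{\beta_2}$ at $\beta_1=0$) to $-\tfrac{8\beta_2}{\Delta t^2}\mathcal I-\tfrac{4\beta_2^2}{\Delta t}\mathcal L$, which factors as $-\tfrac{8\beta_2}{\Delta t^2}\bigl[\mathcal I+\tfrac{\beta_2\Delta t}{2}\mathcal L\bigr]$.

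Next I would divide \eqref{eq5} through by the scalar $-8\beta_2/\Delta t^2$ and carry out the analogous reduction on the $u^n$-operator $\mathcal A_{\beta_1}\mathcal C_{\beta_2}-\mathcal A_{\beta_2}\mathcal C_{\beta_1}$ and on the forcing pair $\mathcal A_{\beta_2}f(t^{n})-\mathcal A_{0}f(t^{n+\beta_2})$. This casts the scheme into the normalized form
\begin{equation}
\nonumber
\begin{aligned}
\Bigl[\mathcal I+\tfrac{\beta_2\Delta t}{2}\mathcal L\Bigr]u^{n+1}
=&\Bigl[\mathcal I+\tfrac{(\beta_2-2)\Delta t}{2}\mathcal L+\tfrac{(1-\beta_2)\Delta t^2}{2}\mathcal L^2\Bigr]u^{n}\\
&+\Delta t\bigl(1-\tfrac{1}{2\beta_2}\bigr)f(t^{n})+\tfrac{(\beta_2-1)\Delta t^2}{2}\mathcal Lf(t^{n})+\tfrac{\Delta t}{2\beta_2}f(t^{n+\beta_2}).
\end{aligned}
\end{equation}
I read the left factor $\mathcal I+\tfrac{\beta_2\Delta t}{2}\mathcal L$ as the implicit stabilization term and the right-hand side as an RK2-type explicit combination of $u^n$, $f(t^n)$ and $f(t^{n+\beta_2})$ together with $\mathcal L$- and $\mathcal L^2$-corrections. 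To certify that the operator corrections are genuine stabilization rather than an order-reducing perturbation, I would compute the amplification factor on the homogeneous problem ($f=0$, $\mathcal L$ a scalar $\lambda$, $z=\lambda\Delta t$),
\begin{equation}
\nonumber
R(z)=\frac{1+\tfrac{\beta_2-2}{2}z+\tfrac{1-\beta_2}{2}z^2}{1+\tfrac{\beta_2}{2}z},
\end{equation}
and expand it to see that $R(z)=1-z+\tfrac12 z^2+\mathcal O(z^3)$, coinciding with the RK2 stability polynomial through second order; thus the scheme is RK2-consistent with a rational, stabilizing denominator.

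Finally, for the canonical form \eqref{eq-system}, in which there is no stiff operator and $\mathcal L=0$, the stabilization factor becomes the identity and all $\mathcal L$- and $\mathcal L^2$-terms vanish, leaving
\begin{equation}
\nonumber
u^{n+1}=u^{n}+\Delta t\Bigl[\bigl(1-\tfrac{1}{2\beta_2}\bigr)f(t^{n})+\tfrac{1}{2\beta_2}f(t^{n+\beta_2})\Bigr].
\end{equation}
I would then read off the Butcher data $d_1=1-\tfrac{1}{2\beta_2}$, $d_2=\tfrac{1}{2\beta_2}$, $e_2=\beta_2$ (so that $c_{2,1}=e_2=\beta_2$) and check them against the second-order conditions recorded in Section 2, namely $d_1+d_2=1$ and $d_2e_2=\tfrac12$; both hold identically in $\beta_2$, so the scheme coincides exactly with the classical two-stage RK2 family.

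The hard part will be pinning down the imprecise phrase ``Runge-Kutta scheme with stability terms'': the real work is choosing the normalized form above so that the explicit RK2 skeleton and the implicit stabilizing factor $\mathcal I+\tfrac{\beta_2\Delta t}{2}\mathcal L$ are genuinely separated. Once the left operator is factored, the remaining reductions are bookkeeping; the single point requiring care is confirming, via the amplification-factor expansion, that the $\mathcal L$-dependent corrections on the right are precisely those produced by pushing an RK2 update through the stabilizing factor, rather than terms that would spoil the second-order accuracy.
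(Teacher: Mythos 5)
Your proposal is correct and follows essentially the same route as the paper's proof: substitute $\beta_1=0$ into \eqref{eq5}, simplify the operators (your reduced coefficients agree with the paper's rearranged form \eqref{mRK_2}), and identify the result as RK2 plus a stabilization by verifying $d_1+d_2=1$ and $d_2e_2=\tfrac12$, with the stage value $\hat u^{\,n+\beta_2}=u^n+\beta_2\Delta t f(t^n,u^n)$ implicit in your choice $c_{2,1}=e_2=\beta_2$. The only differences are presentational --- you keep the stabilization as the factored implicit operator $\mathcal I+\tfrac{\beta_2\Delta t}{2}\mathcal L$ and add an amplification-factor expansion, whereas the paper writes it as the residual correction $\tfrac{\beta_2\Delta t}{2}\mathcal L\bigl[u^{n+1}-u^{n+1}_{\mathrm{RK1}}\bigr]$ and defers stability to Section~4.
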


\begin{proof}
When $\beta_1=0$ and for any $\beta_2\neq\beta_1$, the scheme \eqref{eq5} reduces to
\begin{equation}
\begin{aligned}
(\beta_2\Delta t\mathcal L+2\mathcal I)u^{n+1}=&\left[(1-\beta_2)\Delta t^2\mathcal L^2+(\beta_2-2)\Delta t\mathcal L+2\mathcal I\right]u^{n} \\
&+\frac{(1-2\beta_2)+\beta_2(1-\beta_2)\Delta t\mathcal L}{2\beta_2}\Delta tf^{n}+\frac{1}{2\beta_2}\Delta tf^{n+\beta_2}.
\end{aligned}
\end{equation}

Through elementary algebraic manipulation, the preceding equation is reformulated as
\begin{equation}\label{mRK_2}
\begin{aligned}
&u^{n+1}+\frac{1}{2}\beta_2\Delta t\mathcal L\left[u^{n+1}-(u^n-\Delta t\mathcal Lu^n+f^n)\right]\\
=&u^n-\Delta t\mathcal Lu^n+\frac12\Delta t^2\mathcal L^2u^n+\frac{2\beta_2-1}{2\beta_2}\Delta tf^{n}-\frac12\Delta t^2\mathcal Lf^n+\frac{1}{2\beta_2}\Delta tf^{n+\beta_2}.
\end{aligned}
\end{equation}

Based on \eqref{RK_scheme}, we derive the following RK1 and RK2 schemes:

RK1:
\begin{equation}
u^{n+1}=u^n-\Delta t\mathcal Lu^n+f^n
\end{equation}

RK2:
\begin{equation}
\begin{aligned}
&u^{n+1}=u^n-\Delta t\mathcal Lu^n+\frac12\Delta t^2\mathcal L^2u^n+d_1\Delta tf^{n}-\frac12\Delta t^2\mathcal Lf^n+d_2\Delta tf^{n+e_2}, \\
&d_1+d_2=1,\;d_2e_2=\frac12.
\end{aligned}
\end{equation}

An easy comparative analysis between the RK2 scheme and the proposed one-step formulation \eqref{mRK_2}, along with the identities below:
\begin{equation}
\frac{2\beta_2-1}{2\beta_2}+\frac{1}{2\beta_2}=1,\;\frac{1}{2\beta_2}\cdot\beta_2=\frac12,
\end{equation}
reveals that when $\beta_1=0$, the one-step scheme \eqref{mRK_2} incorporates a second-order stabilization term while maintaining equivalence to the classical RK2 scheme. It is rigorously established by subsequent stability analysis that the augmentation with stabilization terms substantially improves the stability region of the RK2 scheme.

We now prove that if the parabolic equation \eqref{eq1} takes form \eqref{eq-system}, then the proposed one-step scheme \eqref{mRK_2} with $\beta = 0$ reduces exactly to the RK2 method. Based on the previous analysis, the one-step scheme \eqref{mRK_2} for system \eqref{eq-system} will become 
\begin{equation}
u^{n+1}=u^n+\Delta t\frac{(1-2\beta_2)f(t^{n+\beta_1},\hat u^{n+\beta_1})-(1-2\beta_1)f(t^{n+\beta_2},\hat u^{n+\beta_2})}{2(\beta_1-\beta_2)},
\end{equation}
where $\hat u^{n+\beta_1}$ and $\hat u^{n+\beta_2}$ serve as an approximation of $u(t^{n+\beta_1})$ and $u(t^{n+\beta_2})$.

When we set $\beta_1=0$ and $\hat u^{n+\beta_2}=u^n+\beta_2\Delta tf(t^n,u^n)$, the above equation degenerates into
\begin{equation}
u^{n+1}=u^n+\frac{2\beta_2-1}{2\beta_2}\Delta tf(t^n,u^{n})+\frac{1}{2\beta_2}\Delta tf\left(t^{n+\beta_2},u^n+\beta_2\Delta tf(t^n,u^n)\right),
\end{equation}
which is totally RK2 scheme for system \eqref{eq-system}.
\end{proof}
 
\subsection{The second high-order one-step scheme}
In developing the high=order one-step scheme in the preceding subsection, we introduced only one intermediate layer $t^{n+\frac12}$. By employing two distinct $\beta$ parameters on this basis to eliminate the unknown term $u(t^{n+\frac12})$, we derived the second-order one-step scheme \eqref{eq5}. It is clear that introducing $k$ intermediate layers $t^{n+\frac{1}{k+1}}$, $t^{n+\frac{2}{k+1}}$, $\ldots$, $t^{n+\frac{k}{k+1}}$, coupled with $k+1$ distinct $\beta$ parameters to eliminate $k$ unknowns $u(t^{n+\frac{1}{k+1}})$, $u(t^{n+\frac{2}{k+1}})$, $\ldots$, $u(t^{n+\frac{k}{k+1}})$, allows us to establish a general procedure for constructing $(k+1)$th-order one-step schemes. In the subsection, we will establish the one-step third-order scheme for the representative case $k=2$, achieved by implementing two intermediate layers and three distinct $\beta$ parameters within the framework.

Performing Taylor expansions of $u(t^{n})$, $u(t^{n+1})$ and the two intermediate layers $u(t^{n+\frac13})$, $u(t^{n+\frac23})$ with respect to the reference node $t^{n+\beta}$ yields the following system of equations:
\begin{equation}
\label{taylor3}
\begin{aligned}
u(t^{n+1})=&u(t^{n+\beta})-u_t(t^{n+\beta})(\beta-1)\Delta t+u^{(2)}(t^{n+\beta})\frac{(\beta-1)^2\Delta t^2}{2} \\
&-u^{(3)}(t^{n+\beta})\frac{(\beta-1)^3\Delta t^3}{6}+u^{(4)}(t^{n+\beta})\frac{(\beta-1)^4\Delta t^4}{24}+C\Delta t^5, \\
u(t^{n+\frac23})=&u(t^{n+\beta})-u_t(t^{n+\beta})(\beta-\frac23)\Delta t+u^{(2)}(t^{n+\beta})\frac{(\beta-\frac23)^2\Delta t^2}{2} \\
&-u^{(3)}(t^{n+\beta})\frac{(\beta-\frac23)^3\Delta t^3}{6}+u^{(4)}(t^{n+\beta})\frac{(\beta-\frac23)^4\Delta t^4}{24}+C\Delta t^5, \\
u(t^{n+\frac13})=&u(t^{n+\beta})-u_t(t^{n+\beta})(\beta-\frac13)\Delta t+u^{(2)}(t^{n+\beta})\frac{(\beta-\frac13)^2\Delta t^2}{2} \\
&-u^{(3)}(t^{n+\beta})\frac{(\beta-\frac13)^3\Delta t^3}{6}+u^{(4)}(t^{n+\beta})\frac{(\beta-\frac13)^4\Delta t^4}{24}+C\Delta t^5, \\
u(t^{n})=&u(t^{n+\beta})-u_t(t^{n+\beta})\beta\Delta t+u^{(2)}(t^{n+\beta})\frac{\beta^2\Delta t^2}{2} \\
&-u^{(3)}(t^{n+\beta})\frac{\beta^3\Delta t^3}{6}+u^{(4)}(t^{n+\beta})\frac{\beta^4\Delta t^4}{24}+C\Delta t^5.
\end{aligned}
\end{equation}

Through algebraic manipulation of the aforementioned four equations \eqref{taylor3}, the first, third and fourth terms of right hand for every equations  can be annihilated, yielding the following expression:
\begin{equation}
\label{eqthird}
\begin{aligned}
&A_{3,1}(\beta)u(t^{n})+A_{3,2}(\beta)u(t^{n+\frac13})+A_{3,3}(\beta)u(t^{n+\frac23})+A_{3,4}(\beta)u(t^{n+1}) \\
&=u_t(t^{n+\beta})\Delta t+C_{3,1}(n,\beta)\Delta t^4+C\Delta t^5,
\end{aligned}
\end{equation}
where its coefficients $A_{3,1}(\beta),\;A_{3,2}(\beta),\;A_{3,3}(\beta)$ and $A_{3,4}(\beta)$ satisfy 
\begin{equation}\label{eq8}
\begin{bmatrix}
1 & 1 & 1 & 1\\
\beta & \beta-\frac13 & \beta-\frac23 & \beta-1 \\
\beta^2 & (\beta-\frac13)^2 & (\beta-\frac23)^2 & (\beta-1)^2 \\
\beta^3 & (\beta-\frac13)^3 & (\beta-\frac23)^3 & (\beta-1)^3
\end{bmatrix}
\begin{bmatrix}
A_{3,1}(\beta) \\
A_{3,2}(\beta) \\
A_{3,3}(\beta) \\
A_{3,4}(\beta)
\end{bmatrix}
=
\begin{bmatrix}
0 \\
-1 \\
0 \\
0
\end{bmatrix}
.
\end{equation}
The solution to the matrix system \eqref{eq8} is directly accessible through the non-singularity of the Vandermonde structure and basic linear algebraic manipulations, yielding the explicit expression:
\begin{equation}
\begin{aligned}
&A_{3,1}(\beta)=-\frac{27\beta^2}2+18\beta-\frac{11}2,\;A_{3,2}(\beta)=\frac{81\beta^2}2-45\beta+9, \\
&A_{3,3}(\beta)=-\frac{81\beta^2}2+36\beta-\frac{9}2,\;A_{3,4}(\beta)=\frac{27\beta^2}2-9\beta+1, \\
&C_{3,1}(n,\beta)=\left(-\frac{\beta^3}6+\frac{\beta^2}4-\frac{11\beta}{108}+\frac{1}{108}\right)u^{(4)}(t^{n+\beta}).
\end{aligned}
\end{equation}

Similarly, resolving the linear system formed by combining the four equations \eqref{taylor3} eliminates the coefficients of the second, third and fourth terms of right hand for every equations, resulting in
\begin{equation}\label{eqthird2}
\begin{aligned}
&B_{3,1}(\beta)u(t^{n})+B_{3,2}(\beta)u(t^{n+\frac13})+B_{3,3}(\beta)u(t^{n+\frac23})+B_{3,4}(\beta)u(t^{n+1})\\
=&u(t^{n+\beta})+C\Delta t^4,
\end{aligned}
\end{equation}
where $B_{3,1}(\beta)$, $B_{3,2}(\beta)$, $B_{3,3}(\beta)$, and $B_{3,4}(\beta)$ satisfy 
\begin{equation}\label{eq9}
\begin{bmatrix}
1 & 1 & 1 & 1\\
\beta & \beta-\frac13 & \beta-\frac23 & \beta-1 \\
\beta^2 & (\beta-\frac13)^2 & (\beta-\frac23)^2 & (\beta-1)^2 \\
\beta^3 & (\beta-\frac13)^3 & (\beta-\frac23)^3 & (\beta-1)^3
\end{bmatrix}
\begin{bmatrix}
B_{3,1}(\beta) \\
B_{3,2}(\beta) \\
B_{3,3}(\beta) \\
B_{3,4}(\beta)
\end{bmatrix}
=
\begin{bmatrix}
1 \\
0	 \\
0 \\
0
\end{bmatrix}
.
\end{equation}

Leveraging the invertibility of the Vandermonde matrix coupled with elementary matrix operations, we readily solve the aforementioned matrix system \eqref{eq9} to obtain the solution as follows:
\begin{equation}
\begin{aligned}
&B_{3,1}(\beta)=-\frac{9\beta^3}2+9\beta^2-\frac{11}2\beta+1,\;B_{3,2}(\beta)=\frac{27\beta^3}2-\frac{45\beta^2}2+9\beta, \\
&B_{3,3}(\beta)=-\frac{27\beta^3}2+18\beta^2-\frac{9\beta}2,\;B_{3,4}(\beta)=\frac{9\beta^3}2-\frac{9\beta^2}2+\beta. \\
\end{aligned}
\end{equation}

By substituting equations (\ref{eqthird}) and (\ref{eqthird2}) into equation (\ref{eq1}), we can obtain
\begin{equation}\label{eq10}
\begin{aligned}
&\frac{1}{\Delta t}\left[A_{3,1}(\beta)u(t^{n})+A_{3,2}(\beta)u(t^{n+\frac13})+A_{3,3}(\beta)u(t^{n+\frac23})+A_{3,4}(\beta)u(t^{n+1})\right] \\
&+\mathcal L\left[B_{3,1}(\beta)u(t^{n})+B_{3,2}(\beta)u(t^{n+\frac13})+B_{3,3}(\beta)u(t^{n+\frac23})+B_{3,4}(\beta)u(t^{n+1})\right] \\
&=f(t^{n+\beta})+C_{3,1}(n,\beta)\Delta t^3+C\Delta t^4.
\end{aligned}
\end{equation}

Subsequently, to streamline the exposition, we introduce the following four operator notations:
\begin{equation}\label{eq11}
\begin{aligned}
&\mathcal A_{\beta}=\frac1{\Delta t}A_{3,2}(\beta)\mathcal I+B_{3,2}(\beta)\mathcal L,\;\mathcal B_{\beta}=\frac1{\Delta t}A_{3,3}(\beta)\mathcal I+B_{3,3}(\beta)\mathcal L, \\
&\mathcal C_{\beta}=\frac1{\Delta t}A_{3,4}(\beta)\mathcal I+B_{3,4}(\beta)\mathcal L,\;\mathcal D_{\beta}=-\frac1{\Delta t}A_{3,1}(\beta)\mathcal I-B_{3,1}(\beta)\mathcal L.
\end{aligned}
\end{equation}

Consider three distinct real scalars $\beta_1$, $\beta_2$ and $\beta_3$ ($\beta_1\neq\beta_2\neq\beta_3$) for equation \eqref{eq10} to obtain
\begin{equation}
\begin{aligned}
\begin{bmatrix}
\mathcal A_{\beta_1} & \mathcal B_{\beta_1} & \mathcal C_{\beta_1} \\
\mathcal A_{\beta_2} & \mathcal B_{\beta_2} & \mathcal C_{\beta_2} \\
\mathcal A_{\beta_3} & \mathcal B_{\beta_3} & \mathcal C_{\beta_3} 
\end{bmatrix}
\begin{bmatrix}
u(t^{n+\frac13}) \\
u(t^{n+\frac23}) \\
u(t^{n+1})
\end{bmatrix}
=
\begin{bmatrix}
\mathcal D_{\beta_1}u(t^{n})+f(t^{n+\beta_1})+C_{3,1}(n,\beta_1)\Delta t^3 \\
\mathcal D_{\beta_2}u(t^{n})+f(t^{n+\beta_2})+C_{3,1}(n,\beta_2)\Delta t^3 \\
\mathcal D_{\beta_3}u(t^{n})+f(t^{n+\beta_3})+C_{3,1}(n,\beta_3)\Delta t^3
\end{bmatrix}
+C\Delta t^4.
\end{aligned}
\end{equation}

Based on the explicit operator definitions in \eqref{eq11}, the analytic expression of the determinant operator of the coefficient matrix components:
\begin{equation}
\begin{aligned}
\left\vert
\begin{matrix}
\mathcal A_{\beta_1} & \mathcal B_{\beta_1} & \mathcal C_{\beta_1} \\
\mathcal A_{\beta_2} & \mathcal B_{\beta_2} & \mathcal C_{\beta_2} \\
\mathcal A_{\beta_3} & \mathcal B_{\beta_3} & \mathcal C_{\beta_3} 
\end{matrix}
\right\vert=&-\frac{243(\beta_1-\beta_2)(\beta_1-\beta_3)(\beta_2-\beta_3)}{4\Delta t^3}\left[\beta_1\beta_2\beta_3\Delta t^3\mathcal L^3\right. \\
&+\left.(\beta_1\beta_2+\beta_1\beta_3+\beta_2\beta_3)\Delta t^2\mathcal L^2+2(\beta_1+\beta_2+\beta_3)\Delta t\mathcal L+6\mathcal I\right].
\end{aligned}
\end{equation}
Given that $\beta_1$, $\beta_2$ and $\beta_3$ are pairwise distinct, it is evident that the above operator is invertible. Consequently, via matrix computation, we obtain the representation of $u(t^{n+1})$ as follows:
\begin{equation}\label{eq12}
\begin{aligned}
u(t^{n+1})=&[(\beta_1-1)\Delta t\mathcal L+\mathcal I][(\beta_2-1)\Delta t\mathcal L+\mathcal I][(\beta_3-1)\Delta t\mathcal L+\mathcal I]\mathcal F_{\beta_1,\beta_2,\beta_3}u(t^n) \\
&+[(\beta_1+\beta_2+\beta_3-3)\Delta t\mathcal L+5\mathcal I]\mathcal F_{\beta_1,\beta_2,\beta_3}u(t^n) \\
&+\mathcal G_{\beta_1,\beta_2,\beta_3}\Delta tf(t^{n+\beta_1})+\mathcal G_{\beta_2,\beta_3,\beta_1}\Delta tf(t^{n+\beta_2})+\mathcal G_{\beta_3,\beta_1,\beta_2}\Delta tf(t^{n+\beta_3}) \\
&-\frac{18\beta_1^3-27\beta_1^2+11\beta_1-1}{108}\mathcal G_{\beta_1,\beta_2,\beta_3}u^{(4)}(t^{n+\beta_1})\Delta t^4 \\
&-\frac{18\beta_2^3-27\beta_2^2+11\beta_2-1}{108}\mathcal G_{\beta_2,\beta_3,\beta_1}u^{(4)}(t^{n+\beta_2})\Delta t^4 \\
&-\frac{18\beta_3^3-27\beta_3^2+11\beta_3-1}{108}\mathcal G_{\beta_3,\beta_1,\beta_2}u^{(4)}(t^{n+\beta_1})\Delta t^4 +C\Delta t^5,
\end{aligned}
\end{equation}
where 
\begin{equation}
\begin{aligned}
\mathcal F_{\beta_1,\beta_2,\beta_3}=&[(\beta_1\Delta t\mathcal L+\mathcal I)(\beta_2\Delta t\mathcal L+\mathcal I)(\beta_3\Delta t\mathcal L+\mathcal I)+(\beta_1+\beta_2+\beta_3)\Delta t\mathcal L+5\mathcal I]^{-1}, \\
\mathcal G_{\beta_1,\beta_2,\beta_3}=&\frac{\beta_2\beta_3(\beta_2-1)(\beta_3-1)\Delta t^2\mathcal L^2}{(\beta_1-\beta_2)(\beta_1-\beta_3)}\mathcal F_{\beta_1,\beta_2,\beta_3} \\
&+\frac{(2\beta_2\beta_3-\beta_2-\beta_3)(\beta_2+\beta_3-1)\Delta t\mathcal L}{(\beta_1-\beta_2)(\beta_1-\beta_3)}\mathcal F_{\beta_1,\beta_2,\beta_3} \\
&+\frac{6\beta_2\beta_3-3\beta_2-3\beta_3+2}{(\beta_1-\beta_2)(\beta_1-\beta_3)}\mathcal F_{\beta_1,\beta_2,\beta_3}.
\end{aligned}
\end{equation}

Consequently, we derive the one-step third-order scheme as follows:
\begin{equation}\label{eq13}
\begin{aligned}
u^{n+1}=&[(\beta_1-1)\Delta t\mathcal L+\mathcal I][(\beta_2-1)\Delta t\mathcal L+\mathcal I][(\beta_3-1)\Delta t\mathcal L+\mathcal I]\mathcal F_{\beta_1,\beta_2,\beta_3}u^n \\
&+[(\beta_1+\beta_2+\beta_3-3)\Delta t\mathcal L+5\mathcal I]\mathcal F_{\beta_1,\beta_2,\beta_3}u^n \\
&+\mathcal G_{\beta_1,\beta_2,\beta_3}\Delta tf(t^{n+\beta_1})+\mathcal G_{\beta_2,\beta_3,\beta_1}\Delta tf(t^{n+\beta_2})+\mathcal G_{\beta_3,\beta_1,\beta_2}\Delta tf(t^{n+\beta_3}).
\end{aligned}
\end{equation}

By fixing the values of $\beta_1$, $\beta_2$ and $\beta_3$, we can achieve higher orders of convergence; for example: 
\begin{remark}\label{rem2}
Under the parameter specification $(\beta_1,\beta_2,\beta_3)=(\frac12,\frac{3+\sqrt{5}}{6},\frac{3-\sqrt{5}}{6})$, the relation $18\beta_i^3-27\beta_i^2+11\beta_i-1=0$ $(i=1,2,3)$ holds identically. This forces the fourth, fifth, and sixth terms on the right-hand side of \eqref{eq12} to be zero, resulting in a dominant truncation error of order $O(\Delta t^5)$. Specially, if any one of the parameters $\beta_1$, $\beta_2$, or $\beta_3$ is zero, this scheme \eqref{eq13} degenerates into a third-order Runge-Kutta method with stability terms.
\end{remark}
\section{Stability analysis}
In this section, we conduct stability analysis of the proposed one-step schemes, demonstrating that A-stability and L-stability are achieved when $\beta_i$ satisfies specific conditions.
\subsection{Linear stability regions of \eqref{eq5}}
For the test equation $u_t=\lambda u$, (\ref{eq5}) reduces to
\begin{equation}\label{eq7}
\begin{aligned}
&\left[\beta_1\beta_2\lambda^2\Delta t^2-(\beta_1+\beta_2)\lambda\Delta t+2\right]u^{n+1}\\
=&\left[(\beta_1\beta_2-\beta_1-\beta_2+1)\lambda^2\Delta t^2+(2-\beta_1-\beta_2)\lambda\Delta t+2\right]u^n.
\end{aligned}
\end{equation}
In order to study the stability regions, we set $z=\lambda\Delta t$ and 
\begin{equation}
\begin{aligned}
g_1(z)&=(\beta_1\beta_2-\beta_1-\beta_2+1)z^2+(2-\beta_1-\beta_2)z+2, \\
g_2(z)&=\beta_1\beta_2z^2-(\beta_1+\beta_2)z+2.
\end{aligned}
\end{equation}
Then the region of absolute stability of method (\ref{eq7}) is the set of all $z\in\mathcal C$ such that  $g_2(z)\neq0$ and $\left\vert\frac{g_1(z)}{g_2(z)}\right\vert\leq1$. In Fig.~\ref{stable_ex1}, \ref{stable_ex3}, and \ref{stable_ex4}, we plot the stability regions of the new one-step scheme \eqref{eq5} with different $\beta_1$ and $\beta_2$. It can be observed that for any fixed $\beta_1$, an A-stable one-step scheme can be systematically achieved through deliberate adjustment of $\beta_2$.

\begin{figure}[htp]
\centering
\begin{minipage}{0.32\linewidth}
\centerline{\includegraphics[width=\textwidth]{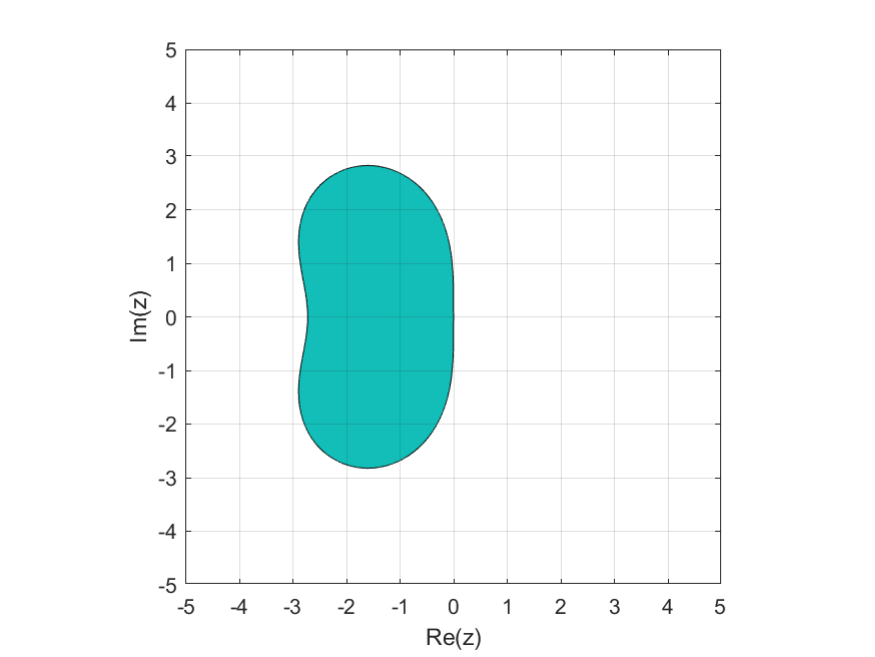}}
\centerline{$\beta_1=-\frac13,\beta_2=\frac35$}
\end{minipage}
\begin{minipage}{0.32\linewidth}
\centerline{\includegraphics[width=\textwidth]{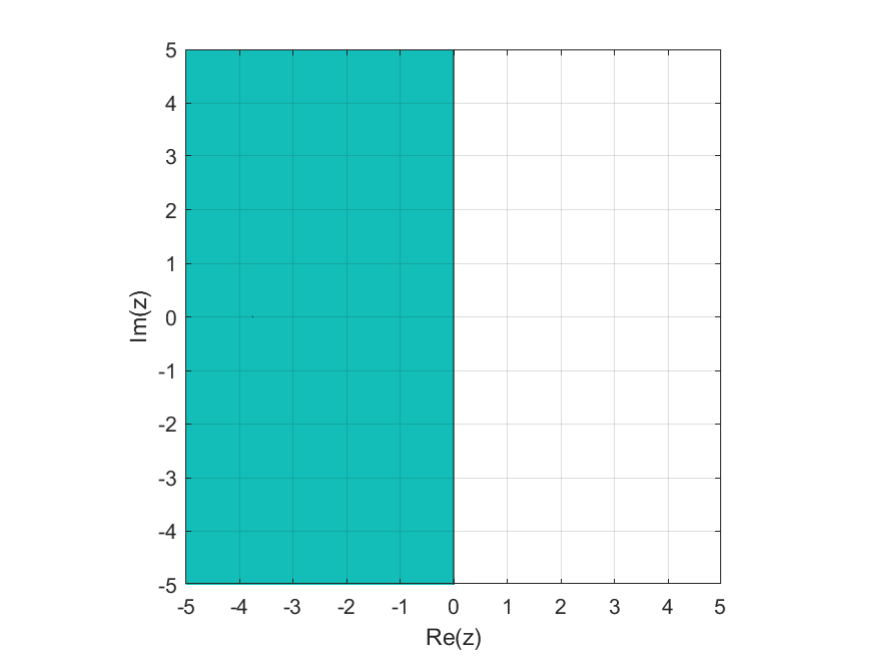}}
\centerline{$\beta_1=-\frac13,\beta_2=\frac45$}
\end{minipage}
\begin{minipage}{0.32\linewidth}
\centerline{\includegraphics[width=\textwidth]{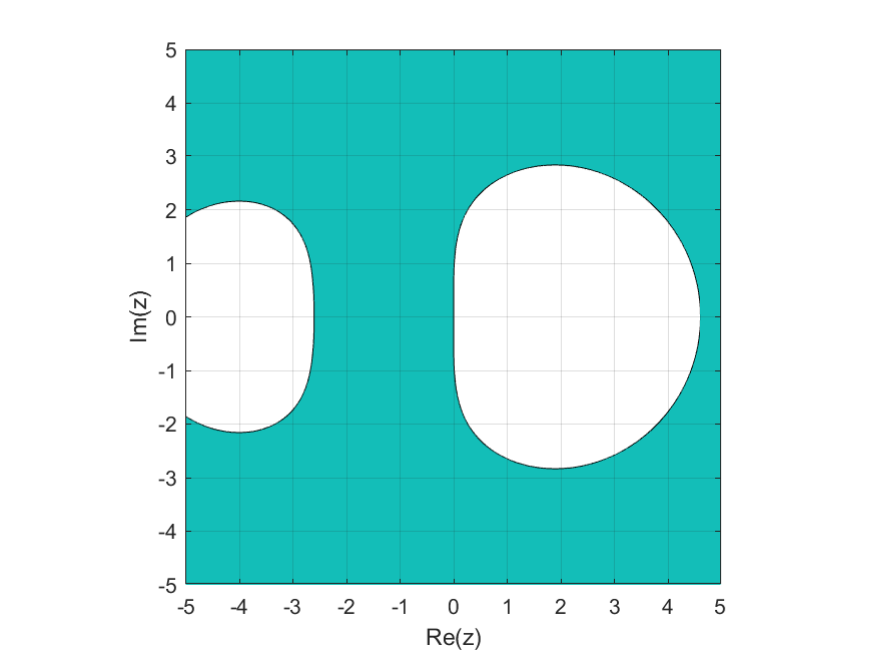}}
\centerline{$\beta_1=-\frac13,\beta_2=1$}
\end{minipage}

\caption{The green parts show the region of absolute stability of the new schemes with $\beta_1=-\frac13$.}
\label{stable_ex1}
\end{figure}

\begin{figure}[htp]
\centering
\begin{minipage}{0.32\linewidth}
\centerline{\includegraphics[width=\textwidth]{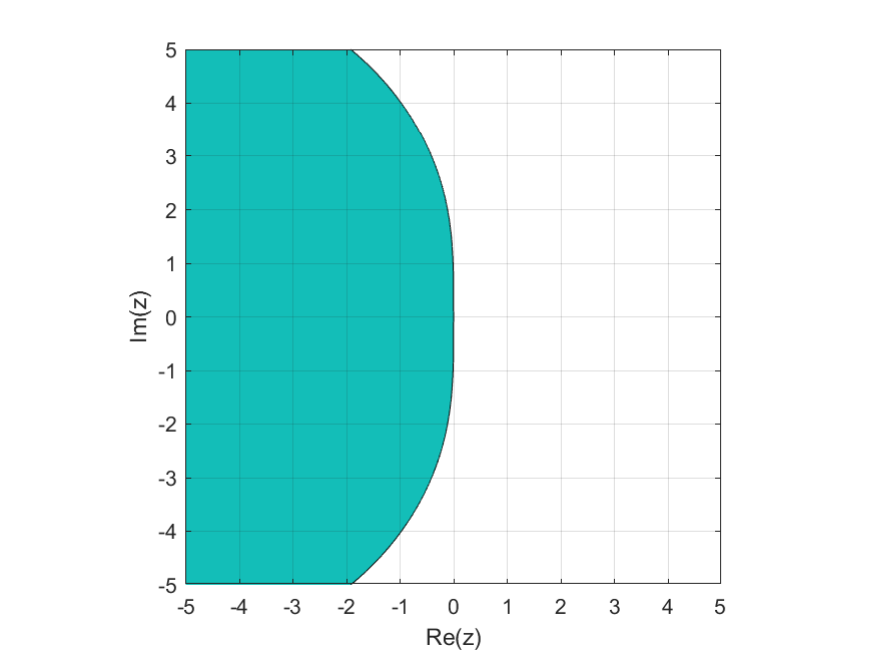}}
\centerline{$\beta_1=\frac13,\beta_2=\frac12$}
\end{minipage}
\begin{minipage}{0.32\linewidth}
\centerline{\includegraphics[width=\textwidth]{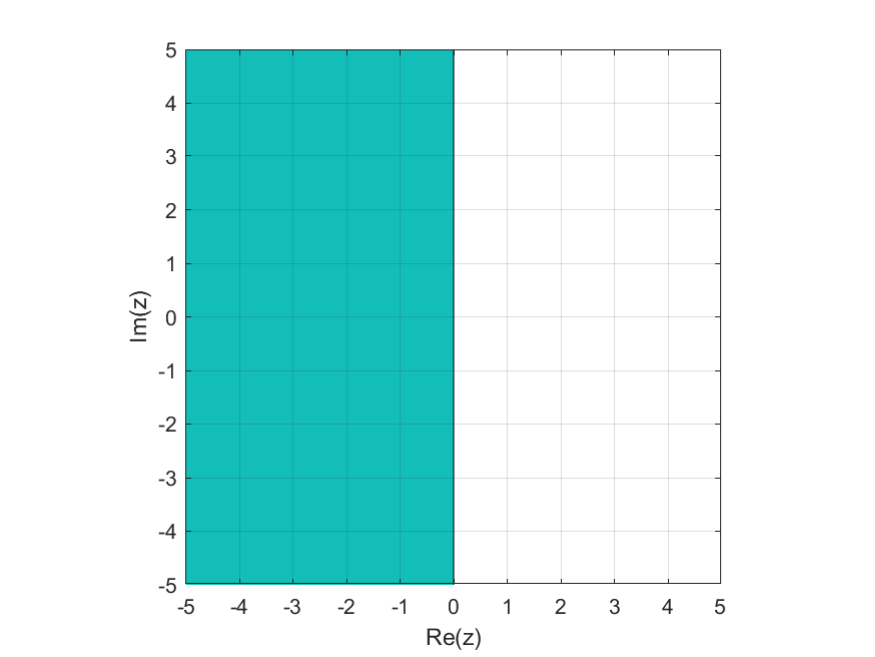}}
\centerline{$\beta_1=\frac13,\beta_2=\frac23$}
\end{minipage}
\begin{minipage}{0.32\linewidth}
\centerline{\includegraphics[width=\textwidth]{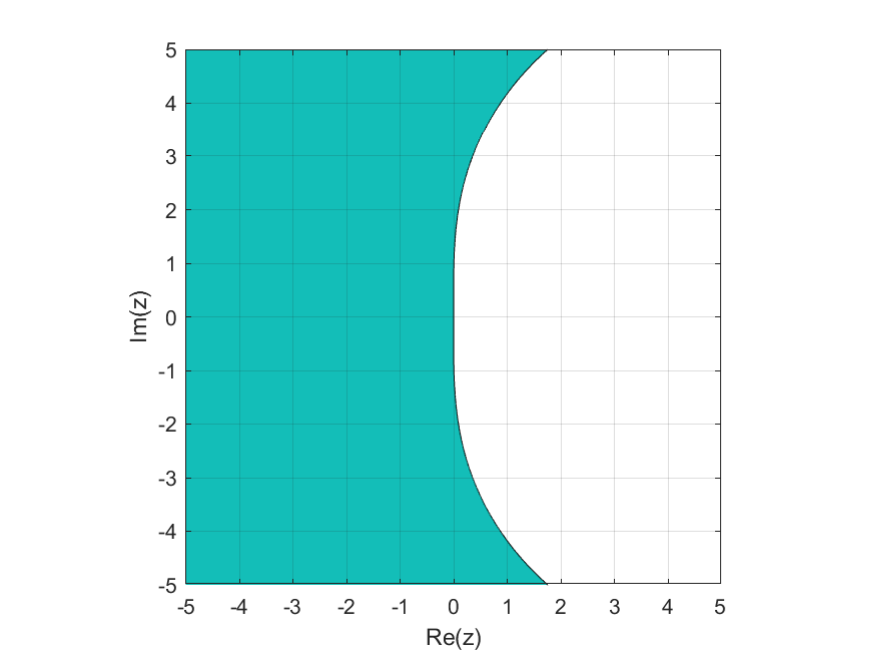}}
\centerline{$\beta_1=\frac13,\beta_2=\frac56$}
\end{minipage}

\caption{The green parts show the region of absolute stability of the new schemes with $\beta_1=\frac13$.}
\label{stable_ex3}
\end{figure}

\begin{figure}[htp]
\centering
\begin{minipage}{0.32\linewidth}
\centerline{\includegraphics[width=\textwidth]{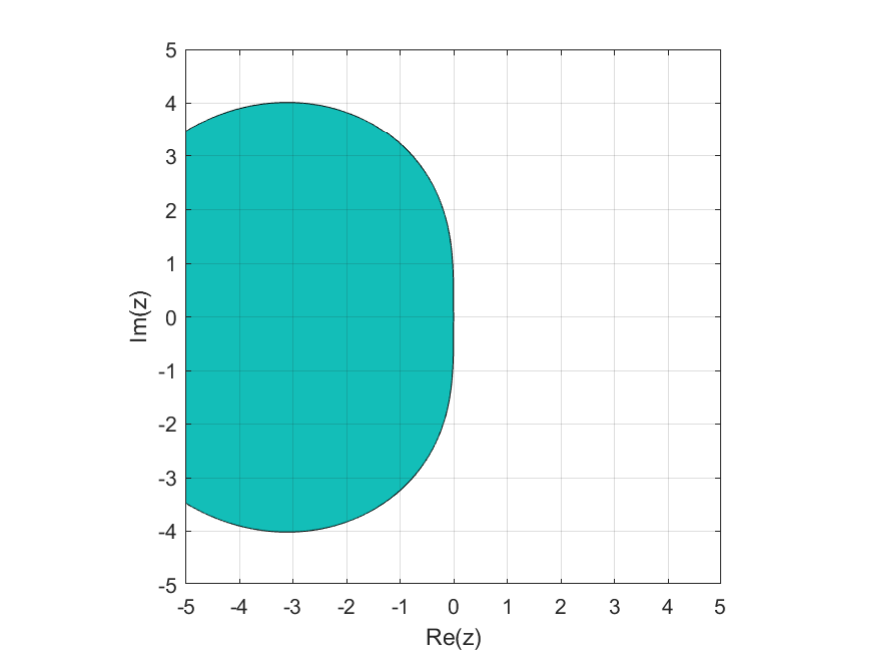}}
\centerline{$\beta_1=\frac12-\frac{\sqrt3}6,\beta_2=1/2$}
\end{minipage}
\begin{minipage}{0.32\linewidth}
\centerline{\includegraphics[width=\textwidth]{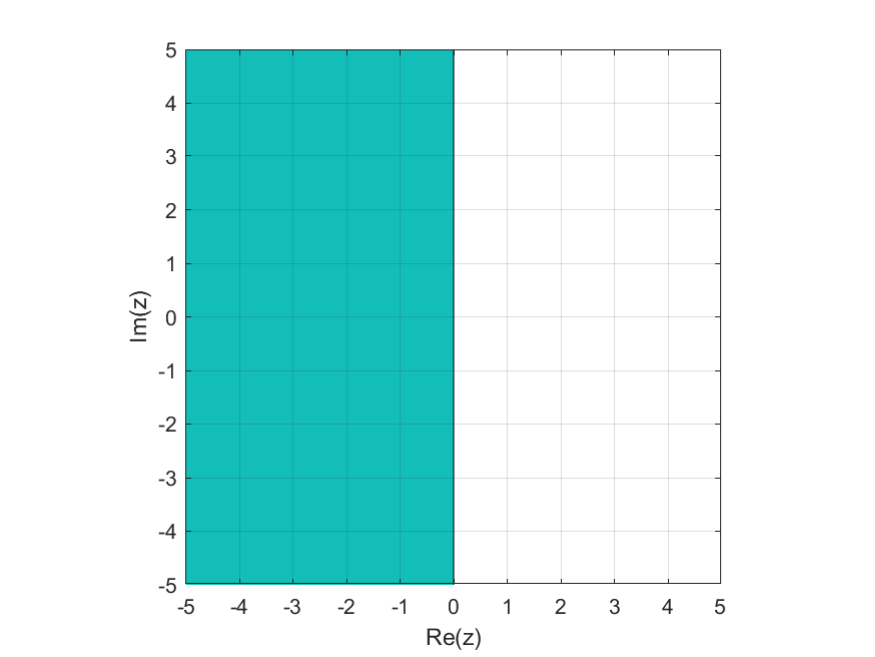}}
\centerline{$\beta_1=\frac12-\frac{\sqrt3}6,\beta_2=\frac12+\frac{\sqrt3}6$}
\end{minipage}
\begin{minipage}{0.32\linewidth}
\centerline{\includegraphics[width=\textwidth]{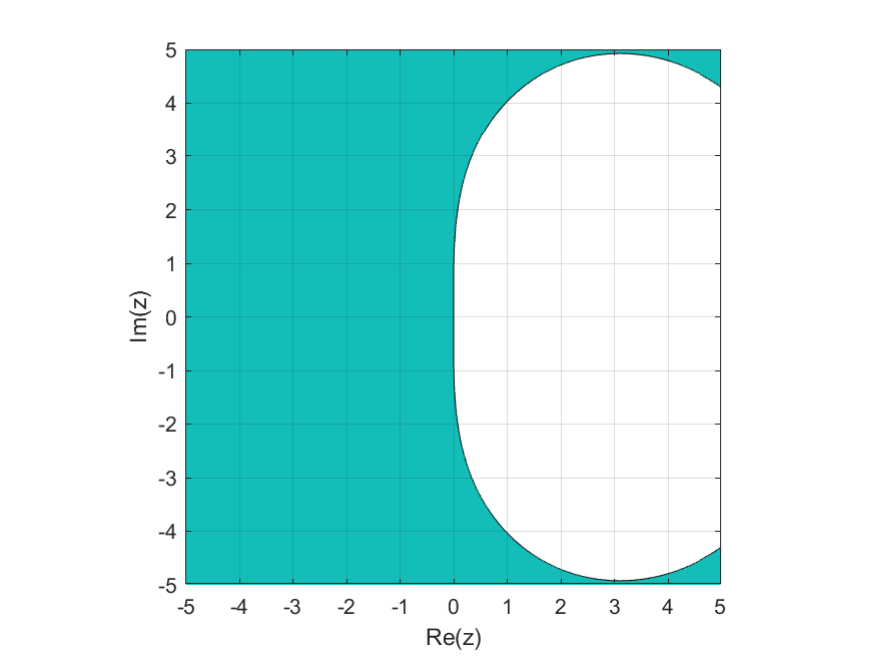}}
\centerline{$\beta_1=\frac12-\frac{\sqrt3}6,\beta_2=\frac12+\frac{\sqrt3}3$}
\end{minipage}

\caption{The green parts show the region of absolute stability of the new schemes with $\beta_1=\frac12-\frac{\sqrt3}6$.}
\label{stable_ex4}
\end{figure}

\subsection{A-stable and L-stable of (\ref{eq5})}
In this subsection, we rigorously establish that the one-step scheme \eqref{eq5} achieves both A-stability and L-stability under the parametric regime where $\beta_1$ and $\beta_2$ satisfy the stipulated algebraic constraints.
\begin{theorem}\label{th3}
If there exists $\beta_1$ and $\beta_2$ to satisfy
\begin{equation}
0\leq\beta_1+\beta_2-1\leq2\beta_1\beta_2,
\end{equation}
then the following inequality always holds true for any given negative real number $z\leq0$:
\begin{equation}
\label{teq1}
\left\vert\frac{g_1(z)}{g_2(z)}\right\vert\leq1.
\end{equation}
\end{theorem}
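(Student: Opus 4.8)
The plan is to bypass the modulus directly and instead establish the equivalent pair of polynomial inequalities. Since the claimed bound $\left|\frac{g_1(z)}{g_2(z)}\right|\le1$ requires a nonvanishing denominator, I would first aim to show $g_2(z)>0$ for every real $z\le0$; granting this, the inequality $|g_1(z)|\le|g_2(z)|$ is equivalent to the two linear conditions $g_2(z)-g_1(z)\ge0$ and $g_2(z)+g_1(z)\ge0$. The whole proof then reduces to controlling the signs of the two combinations $g_2\pm g_1$ on the negative half-line, and positivity of $g_2$ will fall out for free as their half-sum.

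Next I would compute the two combinations explicitly. A direct subtraction and addition of the defining polynomials gives
\begin{equation}
g_2(z)-g_1(z)=(\beta_1+\beta_2-1)z^2-2z=z\bigl[(\beta_1+\beta_2-1)z-2\bigr],
\end{equation}
and
\begin{equation}
g_2(z)+g_1(z)=(2\beta_1\beta_2-\beta_1-\beta_2+1)z^2-2(\beta_1+\beta_2-1)z+4.
\end{equation}
The key observation, which I regard as the crux of the argument, is that the two hypotheses $0\le\beta_1+\beta_2-1$ and $\beta_1+\beta_2-1\le2\beta_1\beta_2$ are precisely the statements that the relevant leading coefficients are nonnegative: the first says $\beta_1+\beta_2-1\ge0$, and the second rearranges to $2\beta_1\beta_2-\beta_1-\beta_2+1\ge0$, which is exactly the leading coefficient of $g_2+g_1$.

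I would then carry out the sign analysis for $z\le0$. For the first combination, the bracket $(\beta_1+\beta_2-1)z-2$ is at most $-2<0$ because $(\beta_1+\beta_2-1)z\le0$, so $g_2-g_1$ is the product of the nonpositive factor $z$ with a strictly negative factor, hence $g_2(z)-g_1(z)\ge0$. For the second combination, each of its three terms is nonnegative on $z\le0$: the quadratic term has a nonnegative coefficient times $z^2\ge0$; the linear coefficient $-2(\beta_1+\beta_2-1)$ is nonpositive and is multiplied by $z\le0$; and the constant is $4>0$. Thus $g_2(z)+g_1(z)>0$. Adding the two combinations yields $2g_2(z)>0$, which secures the needed nonvanishing (indeed positivity) of the denominator, and the two established inequalities then give $-g_2(z)\le g_1(z)\le g_2(z)$, i.e.\ \eqref{teq1}. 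I do not anticipate a genuine obstacle here once the difference-of-squares splitting is in hand; the only subtlety worth flagging is to confirm that positivity of $g_2$ is a consequence rather than an extra assumption, so that the ratio is legitimately defined throughout $z\le0$.
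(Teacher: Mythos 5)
Your proof is correct. It rests on the same difference-of-squares idea as the paper's proof --- the paper forms $h(z)=g_1(z)^2-g_2(z)^2$ and factors it as $z(\alpha_2 z-2)(\alpha_1 z^2+2\alpha_2 z-4)$ with $\alpha_1=\beta_1+\beta_2-2\beta_1\beta_2-1\le 0$ and $\alpha_2=\beta_1+\beta_2-1\ge 0$, which is exactly the product of your two combinations $(g_1-g_2)(g_1+g_2)$ --- but your organization of the argument is genuinely different in two respects. First, instead of bounding the sign of the quartic product via a three-way case analysis on $\alpha_1=0$, $\alpha_2=0$, and $\alpha_1<0,\ \alpha_2>0$ (the last requiring a monotonicity argument for $\alpha_1z^2+2\alpha_2z-4$ on $(-\infty,0]$), you verify the two one-sided inequalities $g_2-g_1\ge 0$ and $g_2+g_1>0$ separately by direct term-by-term sign inspection, which is shorter and case-free. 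Second, you obtain the nonvanishing (indeed strict positivity) of $g_2$ on $z\le 0$ for free as the half-sum of these two inequalities, whereas the paper proves $g_2(z)\neq 0$ by a separate contradiction argument via the quadratic root formula; that argument implicitly divides by $2\beta_1\beta_2$ (problematic when $\beta_1\beta_2=0$, which the hypothesis permits, e.g.\ $\beta_1=0,\ \beta_2=1$) and tacitly treats the discriminant as nonnegative, so your derivation of $g_2>0$ is not just more economical but actually closes a small gap in the paper's treatment of the denominator.
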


\begin{proof}
We first demonstrate that the denominator $g_2(z)$ remains strictly non-zero throughout the parameter domain, achieved via proof by contradiction.
Assuming that there exists $z_0<0$ to satisfy $g_2(z_0)=\beta_1\beta_2z_0^2-(\beta_1+\beta_2)z_0+2=0$, then invoking the quadratic formula, we obtain that the roots $z_0$ can be explicitly given by the following expression:
\begin{equation}
z_0=\frac{\beta_1+\beta_2\pm\sqrt{\beta_1^2+\beta_2^2-6\beta_1\beta_2}}{2\beta_1\beta_2}.
\end{equation}

Considering that $\beta_1\beta_2\geq0$ and $\beta_1+\beta_2\geq1$, we have
\begin{equation}
(\beta_1+\beta_2)^2-(\beta_1^2+\beta_2^2-6\beta_1\beta_2)=8\beta_1\beta_2\geq0,
\end{equation}
which means $\beta_1+\beta_2\pm\sqrt{\beta_1^2+\beta_2^2-6\beta_1\beta_2}\geq0$. Then we have $z_0\geq0$ which contradicts the premise $z_0<0$, hence $g_2(z)\neq0$.

Subsequently, we prove inequality \eqref{teq1}. Considering that $g_2(z)\neq0$, then the inequality \eqref{teq1} is equivalent to 
\begin{equation}
\label{teq2}
g_1(z)^2-g_2(z)^2\leq0
\end{equation} 

For notational clarity, we define $h(z)=g_1(z)^2-g_2(z)^2,\;\alpha_1=\beta_1+\beta_2-2\beta_1\beta_2-1$ and $\alpha_2=\beta_1+\beta_2-1$. 
Then $h(z)$ can be derive
\begin{equation}\label{eq14}
\begin{aligned}
h(z)=&\alpha_1\alpha_2z^4+2(\alpha_2^2-\alpha_1)z^3-8\alpha_2z^2+8z \\
=&z(\alpha_2z-2)(\alpha_1z^2+2\alpha_2z-4).
\end{aligned}
\end{equation}

We now just need to demonstrate that 
\begin{equation}
\begin{aligned}
h(z)\leq0,\quad\forall z\leq0,\quad\forall\alpha_1\leq0,\quad\forall\alpha_2\geq0.
\end{aligned}
\end{equation}
Firstly, if $\alpha_1=0$, the expression of $h(z)$ can be reducible to:
\begin{equation}
h(z)=2z(\alpha_2z-2)^2.
\end{equation}
It is now evident that we readily obtain $h(z)\leq0$, $\forall z\leq0$. Secondly, if $\alpha_1<0$ and $\alpha_2=0$, $h(z)$ will be simplified as the following
\begin{equation}
h(z)=-2z(\alpha_1z^2-4).
\end{equation}
It is obviously easy to obtain $h(z)\leq0$, $\forall z\leq0$ and $\forall \alpha_1<0$. Thirdly, if $\alpha_1<0,\;\alpha_2>0$, 
the product of the first two terms of $h(z)$ in \eqref{eq14} satisfies $z(\alpha_2z-2)\geq0$. For the third term of $h(z)$, we have $$(\alpha_1z^2+2\alpha_2z-4)|_z=2\alpha_1z+2\alpha_2\geq0,\quad\forall z\leq0,$$ 
which means $\alpha_1z^2+2\alpha_2z-4$ is increasing for $z\in(-\infty,0]$. Noting that $\alpha_1z^2+2\alpha_2z-4|_{z=0}=-4<0$, we have $\alpha_1z^2+2\alpha_2z-4<0,\ \forall z\leq0$. In summary, we eventually reach $h(z)\leq0$, $\forall z\leq0$.
\end{proof}

\begin{theorem}
For all complex numbers $Re(z)\leq0$, if their exists $\beta_1$ and $\beta_2$ to satisfy
\begin{equation}
\label{teq3}
0\leq\beta_1+\beta_2-1\leq2\beta_1\beta_2,
\end{equation}
then the following inequality always holds true
\begin{equation}
\label{teq4}
\left\vert\frac{g_1(z)}{g_2(z)}\right\vert\leq1,
\end{equation}
which indicates that the new one-step schemes (\ref{eq5}) are A-stable.
\end{theorem}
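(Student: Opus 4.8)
The plan is to treat the stability function $R(z)=g_1(z)/g_2(z)$ as a rational map on the Riemann sphere and apply the maximum modulus principle on the closed left half-plane $\{z:\mathrm{Re}(z)\le0\}$. I would establish three ingredients: (i) $R$ has no pole in the closed left half-plane, so it is analytic there; (ii) $|R(z)|\le1$ on the boundary imaginary axis; and (iii) $|R(\infty)|\le1$. Because this region, together with the point $\infty$, is a compact subset of the sphere on which $R$ is analytic in the interior and continuous up to its boundary (the imaginary axis together with $\infty$), the maximum modulus principle then forces $|R(z)|\le1$ everywhere on it, which is exactly \eqref{teq4} and hence A-stability.

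For ingredient (i) I would locate the zeros of $g_2(z)=\beta_1\beta_2 z^2-(\beta_1+\beta_2)z+2$ through Vieta's formulas rather than the explicit quadratic roots. When $\beta_1\beta_2>0$ the product of the two roots equals $2/(\beta_1\beta_2)>0$ and their sum equals $(\beta_1+\beta_2)/(\beta_1\beta_2)>0$ by \eqref{teq3}; a real pair with positive product and positive sum must consist of two positive numbers, while a complex-conjugate pair has common real part $(\beta_1+\beta_2)/(2\beta_1\beta_2)>0$. In both cases the poles sit in the open right half-plane. The degenerate case $\beta_1\beta_2=0$ forces $\beta_1+\beta_2=1$ by \eqref{teq3}, so that $g_2(z)=2-z$ has its single root at $z=2>0$. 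Hence $R$ is analytic on $\{\mathrm{Re}(z)\le0\}$.

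Ingredient (ii) is the crux and the only genuine computation. Writing $s=\beta_1+\beta_2$ and $p=\beta_1\beta_2$, separating $g_1(iy)$ and $g_2(iy)$ into real and imaginary parts, and forming the difference of squared moduli, I expect the quartic-in-$y$ expression to collapse, after all the $y^2$ contributions cancel, to
\[
|g_1(iy)|^2-|g_2(iy)|^2=-(s-1)\,(2p-s+1)\,y^4.
\]
Hypothesis \eqref{teq3} states precisely that $s-1\ge0$ and $2p-s+1\ge0$, so both factors are nonnegative and the right-hand side is $\le0$ for every real $y$, giving $|R(iy)|\le1$. I expect the main obstacle to be purely this bookkeeping: confirming that every $y^2$ and $y^4$ term combines into the single signed product above, so I would expand with care and double-check the cancellation.

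For ingredient (iii), the value at infinity is the ratio of leading coefficients, $R(\infty)=(p-s+1)/p$ when $p>0$; the inequalities $s-1\ge0$ and $2p-s+1\ge0$ give $-p\le p-s+1\le p$, whence $|R(\infty)|\le1$, and in the degenerate case $p=0$ one checks $R(\infty)=-1$ directly. With (i)--(iii) in place the maximum modulus argument closes the proof. Since the negative real axis lies inside the closed left half-plane, this statement already contains \cref{th3} as the special case of real $z\le0$, so the present complex argument in fact subsumes the earlier real estimate.
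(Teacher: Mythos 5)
Your proof is correct, and it takes a genuinely different route from the paper. The paper works directly on the whole closed left half-plane: writing $z=re^{i\theta}$ with $\theta\in[\tfrac{\pi}{2},\tfrac{3\pi}{2}]$, it expands $h(z)=g_1(z)g_1(\bar z)-g_2(z)g_2(\bar z)$ into a block that is exactly the real-axis polynomial of \cref{th3} evaluated at $r\cos\theta\le0$ plus two residual terms in $\sin^2\theta$ that are separately shown to be nonpositive; so the complex case is reduced to the real case plus bookkeeping. You instead invoke the maximum modulus principle on the Riemann sphere, which shrinks the verification to the boundary: no poles in the closed left half-plane (your Vieta argument, which is in fact cleaner than the paper's, since the paper's \cref{th3} only rules out real nonpositive roots and the extension to complex $z$ is asserted rather than argued), the single identity $|g_1(iy)|^2-|g_2(iy)|^2=-(s-1)(2p-s+1)y^4$ on the imaginary axis (I checked the cancellation of the $y^2$ terms and the factorization of the $y^4$ coefficient; both are right), and $|R(\infty)|\le1$ from $-p\le p-s+1\le p$. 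Your approach buys a much shorter computation and makes transparent \emph{why} the hypothesis $0\le s-1\le 2p$ is exactly the right condition (each inequality is one factor of the $y^4$ coefficient); the paper's approach is more elementary in that it avoids complex analysis and, as a by-product, exhibits the explicit factorization of $h$ on the negative real axis, which it reuses for the L-stability discussion. You are also right that your argument subsumes \cref{th3} as the special case of real $z\le0$, whereas the paper needs \cref{th3} as an ingredient.
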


\begin{proof}
From Theorem \ref{th3}, we are easy to obtain $g_2(z)\neq0$, $\forall Re(z)\leq0$. Then the inequality \eqref{teq4} is equivalent with the following
\begin{equation}
\label{teq5}
h(z)=g_1(z)g_1(\overline z)-g_2(z)g_2(\overline z)\leq0, \quad\forall Re(z)\leq0.
\end{equation}

Define $z=re^{i\theta}$, where $r\geq0$ and $\theta\in[\frac{\pi}{2},\frac{3\pi}{2}]$. We can derive
\begin{equation}\label{eq15}
\begin{aligned}
h(z)=&\alpha_1\alpha_2r^4+2(\alpha_2^2-\alpha_1)r^3\cos\theta-4\alpha_2r^2(1+\cos2\theta)+8r\cos\theta \\
=&\alpha_1\alpha_2r^4(\cos^2\theta+\sin^2\theta)^2+2(\alpha_2^2-\alpha_1)r^3\cos\theta(\cos^2\theta+\sin^2\theta) \\
&-8\alpha_2r^2\cos^2\theta+8r\cos\theta \\
=&\alpha_1\alpha_2(r\cos\theta)^4+2(\alpha_2^2-\alpha_1)(r\cos\theta)^3-8\alpha_2(r\cos\theta)^2+8(r\cos\theta) \\
&+\alpha_1\alpha_2(2\cos^2\theta\sin^2\theta+\sin^4\theta)r^4+2(\alpha_2^2-\alpha_1)\cos\theta\sin^2\theta r^3
\end{aligned}
\end{equation}

Based on the conclusion of Theorem \ref{th3} combined with $Re(z)=r\cos\theta\leq0$, we obtain that the sum of the first four terms in the expression of $h(z)$ in \eqref{eq15} is negative, i.e.,
\begin{equation}
\alpha_1\alpha_2(r\cos\theta)^4+2(\alpha_2^2-\alpha_1)(r\cos\theta)^3-8\alpha_2(r\cos\theta)^2+8(r\cos\theta)\leq0.
\end{equation}

Given $r\geq0$, $\alpha_1\leq0,\;\alpha_2\geq0$ and $\theta\in[\frac{\pi}{2},\frac{3\pi}{2}]$, it is obviously to obtain the fifth and sixth terms of $h(z)$ in \eqref{eq15} are both negative:
\begin{equation}
\begin{aligned}
\alpha_1\alpha_2(2\cos^2\theta\sin^2\theta+\sin^4\theta)r^4&\leq0, \\
2(\alpha_2^2-\alpha_1)\cos\theta\sin^2\theta r^3&\leq0.
\end{aligned}
\end{equation}

Therefore, we conclude $h(z)\le0$, $\forall Re(z)\leq0$.
\end{proof}
\begin{remark}
The existence condition of $\beta_1$ and $\beta_2$ in \eqref{teq3} to keep the proposed one-step schemes \eqref{eq5} A-stable is fairly relaxed. For example, if we fix $\beta_1=1$, then the inequality \eqref{teq3} will become:
\begin{equation}
0\leq\beta_2\leq2\beta_2,
\end{equation}
which implies that it holds for all $\beta_2\geq0$.
\end{remark}

Next, we will determine the conditions on $\beta_1$ and $\beta_2$ that guarantee L-stability of the proposed one-step schemes \eqref{eq5}.
\begin{theorem}
If $\beta_1$ and $\beta_2$ satisfy the following
\begin{equation}
\label{teq6}
\beta_1+\beta_2-1>0,\;(\beta_1-1)(\beta_2-1)=0,
\end{equation}
then $\forall z\leq0$, we obtain
\begin{equation}
\label{teq7}
\lim_{z\to-\infty}\frac{g_1(z)}{g_2(z)}=0.
\end{equation}
which indicates that the new one-step schemes \eqref{eq5} are L-stable.
\end{theorem}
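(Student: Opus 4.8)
The plan is to reduce the whole statement to an elementary degree count, after recognizing one algebraic identity. Observe that the leading coefficient of $g_1$ factors as $\beta_1\beta_2-\beta_1-\beta_2+1=(\beta_1-1)(\beta_2-1)$. Hence the hypothesis $(\beta_1-1)(\beta_2-1)=0$ in \eqref{teq6} forces the quadratic term of $g_1$ to vanish identically, so that $g_1$ degenerates to the affine polynomial $g_1(z)=(2-\beta_1-\beta_2)z+2$, of degree at most one. This is the conceptual heart of the proof: the L-stability constraint is precisely the condition that strips a degree off the numerator.

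Next I would confirm that the denominator \emph{keeps} its full quadratic degree, since otherwise the degree gap would collapse. Because $g_1$ and $g_2$ are both symmetric in $\beta_1,\beta_2$, I may assume without loss of generality that $\beta_1=1$. Then the first hypothesis $\beta_1+\beta_2-1>0$ becomes simply $\beta_2>0$, so the leading coefficient of $g_2$, namely $\beta_1\beta_2=\beta_2$, is strictly positive. Consequently $\deg g_2=2$ exactly, and moreover $g_2(z)\sim\beta_1\beta_2 z^2\neq0$ for $|z|$ large, so the quotient $g_1(z)/g_2(z)$ is well defined for all sufficiently negative $z$.

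With $\deg g_1\le1<2=\deg g_2$, I would finish by dividing numerator and denominator by $z^2$, which gives $g_1(z)/g_2(z)=\mathcal O(1/z)$ and hence the limit $0$ as $z\to-\infty$, establishing \eqref{teq7}. (The same degree gap in fact yields decay of $g_1/g_2$ along \emph{any} path to infinity in the left half-plane, so the complex form of the L-stability requirement is covered as well.)

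To upgrade the decay \eqref{teq7} into a genuine claim of L-stability, I would finally check that the hypotheses \eqref{teq6} are a special case of the A-stability condition \eqref{teq3}: with $\beta_1=1$ and $\beta_2>0$ one has $0\le\beta_2\le 2\beta_2$, so the scheme is already A-stable by the preceding A-stability theorem; combining A-stability with the vanishing limit of the stability function $g_1/g_2$ at infinity is exactly the definition of L-stability. I do not expect a substantive obstacle here. The only points requiring care are the two highlighted above, namely verifying that \eqref{teq6} together force $\beta_1\beta_2>0$ so that the degree gap is real, and confirming that \eqref{teq6} is compatible with \eqref{teq3} so that the A-stable prerequisite in the definition of L-stability is actually met.
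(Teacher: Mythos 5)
Your proposal is correct and follows essentially the same route as the paper: both arguments hinge on the identity $\beta_1\beta_2-(\beta_1+\beta_2-1)=(\beta_1-1)(\beta_2-1)=0$, which kills the quadratic term of $g_1$ while forcing $\beta_1\beta_2=\beta_1+\beta_2-1>0$ so that $g_2$ stays genuinely quadratic, verifies that \eqref{teq6} implies the A-stability condition \eqref{teq3}, and then concludes by the degree gap that $g_1/g_2\to0$. The only cosmetic difference is that you normalize to $\beta_1=1$ by symmetry, whereas the paper reads off $\beta_1\beta_2>0$ directly from the identity.
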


\begin{proof}
Noting that 
$$
(\beta_1-1)(\beta_2-1)=\beta_1\beta_2-(\beta_1+\beta_2-1)=0,
$$
which means $\beta_1\beta_2>0$ and $2\beta_1\beta_2\geq\beta_1+\beta_2-1>0$, then we obtain the new schemes (\ref{eq5}) are also A-stable under the condition of \eqref{teq6}. Meanwhile, $g_1(z)$ will be simplified as $g_1(z)=(2-\beta_1-\beta_2)z+2$ which leads to the following
\begin{equation}
\lim_{z\to-\infty}\frac{g_1(z)}{g_2(z)}=\lim_{z\to-\infty}\frac{(2-\beta_1-\beta_2)z+2}{\beta_1\beta_2z^2-(\beta_1+\beta_2)z+2}=0.
\end{equation}
Then we obtain the new one-step schemes (\ref{eq5}) are L-stable which completes the proof.
\end{proof}
\begin{remark}
The condition \eqref{teq6} for achieving L-stable of the schemes \eqref{eq5} can be equivalently expressed as the following inequality:
\begin{equation*}
\beta_1=1,~\beta_2>0,\quad or\quad \beta_2=1,~\beta_1>0, 
\end{equation*}
which implies that if either $\beta_1$ or $\beta_2$ takes the value 1, it suffices for the other ($\beta_2$ or $\beta_1$) to be positive.
\end{remark}

In Fig. \ref{stable_ex2}, we plot the stability regions of both RK2 and the new proposed scheme \eqref{eq5} by using the same parameters $\beta_1=0,\;\beta_2=1$. Theorem \ref{thm1} indicates that with such parameter selections, our proposed scheme \eqref{eq5} is a stabilized modification of the RK2 method by adding stabilized term. Fig. \ref{stable_ex2} further demonstrates that the stability region of RK2 is significantly smaller than that of our newly proposed scheme \eqref{eq5}, which fully confirms the theoretical analysis. Besides, we also plotted the stability region by setting $\beta_1=\frac23,\;\beta_2=1$, which the scheme \eqref{eq5} is L-stable in this case.
\begin{figure}[htp]
\centering
\begin{minipage}{0.32\linewidth}
\centerline{\includegraphics[width=\textwidth]{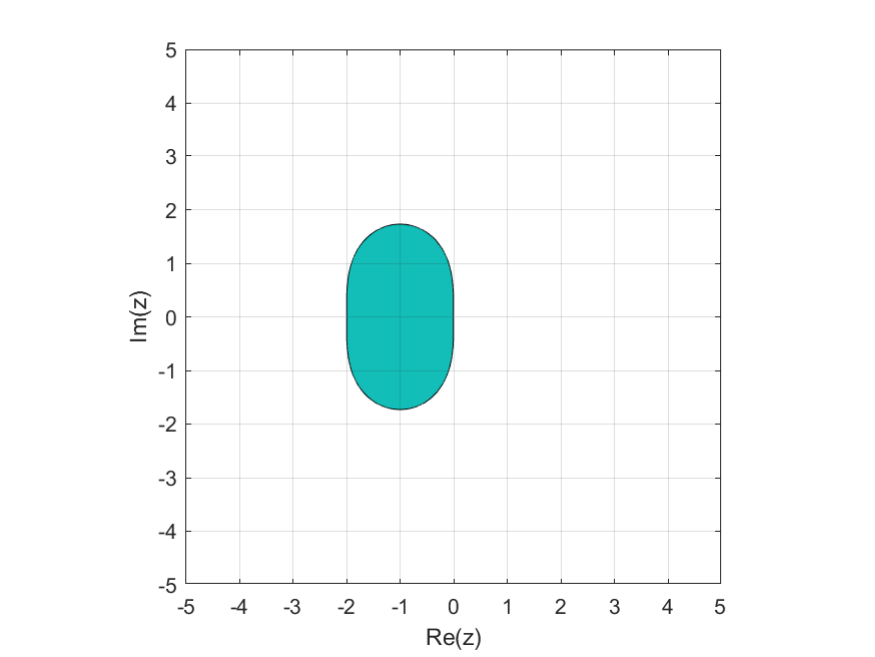}}
\centerline{RK2}
\end{minipage}
\begin{minipage}{0.32\linewidth}
\centerline{\includegraphics[width=\textwidth]{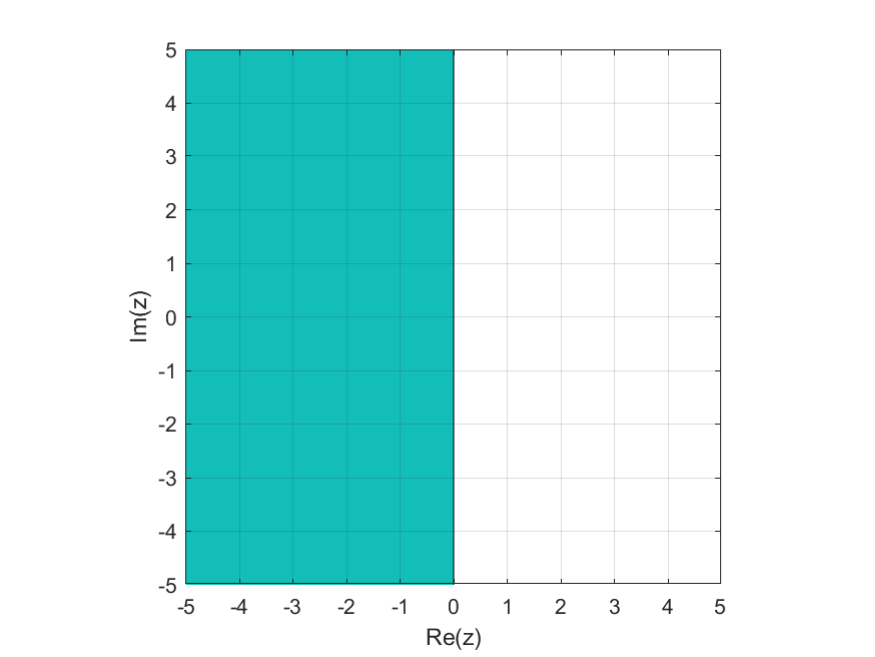}}
\centerline{$\beta_1=0,\;\beta_2=1$}
\end{minipage}
\begin{minipage}{0.32\linewidth}
\centerline{\includegraphics[width=\textwidth]{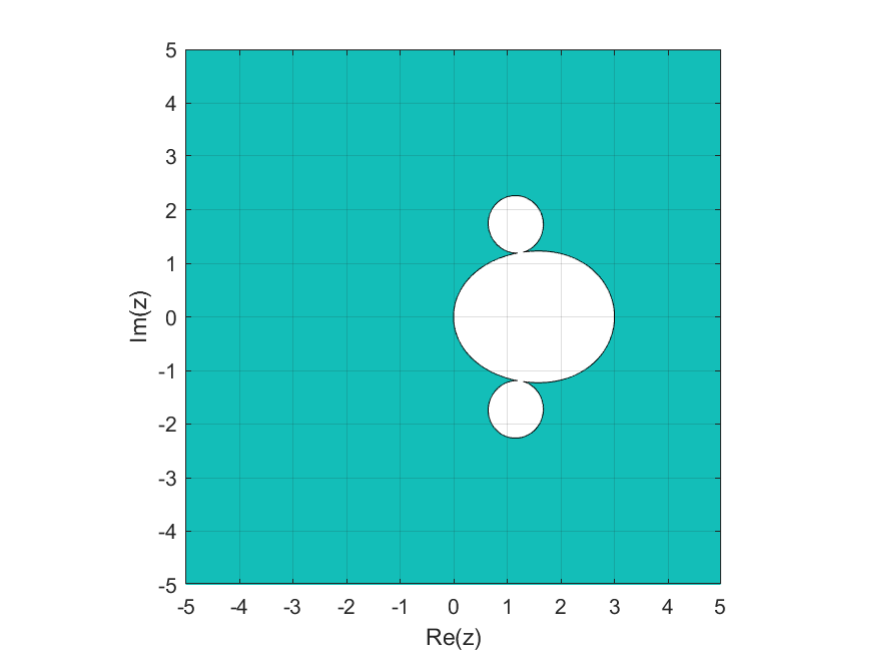}}
\centerline{$\beta_1=\frac23,\;\beta_2=1$}
\end{minipage}

\caption{The green parts show the region of absolute stability.}
\label{stable_ex2}
\end{figure}
\subsection{Linear stability regions of \eqref{eq13}}
In this subsection, we analyze the stability regions of the second high-order one-step scheme \eqref{eq13}. Considering the multiplicity of involved parameters, we choose to fix the value of $\beta_3$ to streamline the analysis.

For the test equation $u_t=\lambda u$, the proposed scheme \eqref{eq13} reduces to the following
\begin{equation}
\begin{aligned}
&\left[(-\beta_1\lambda\Delta t+1)(-\beta_2\lambda\Delta t+1)(-\beta_3\lambda\Delta t+1)-(\beta_1+\beta_2+\beta_3)\lambda\Delta t+5\right]u^{n+1}\\
=&[-(\beta_1-1)\lambda\Delta t+1][-(\beta_2-1)\lambda\Delta t+1][-(\beta_3-1)\lambda\Delta t+1]u^n \\
&+[-(\beta_1+\beta_2+\beta_3-3)\lambda\Delta t+5]u^n.
\end{aligned}
\end{equation}

In Fig.~\ref{stable_3order_ex1} and \ref{stable_3order_ex2}, we plot the stability regions of the new schemes for different $\beta_1$, $\beta_2$ and $\beta_3$. It can be found that for any fixed $\beta_1$ and $\beta_2$, an A-stable one-step scheme can be systematically achieved through deliberate adjustment of $\beta_3$.

\begin{figure}[htp]
\centering
\begin{minipage}{0.32\linewidth}
\centerline{\includegraphics[width=\textwidth]{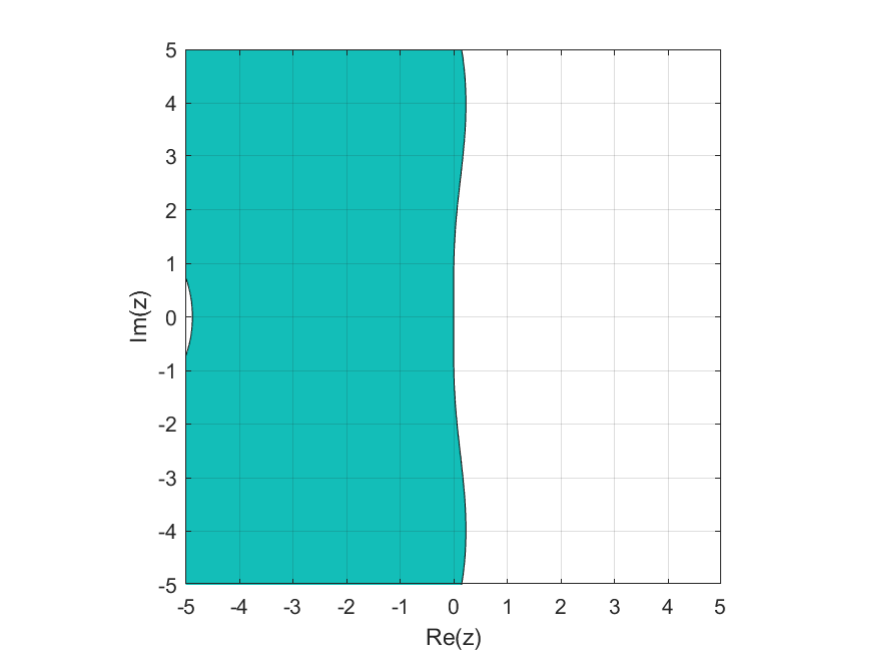}}
\centerline{$\beta_3=\frac34$}
\end{minipage}
\begin{minipage}{0.32\linewidth}
\centerline{\includegraphics[width=\textwidth]{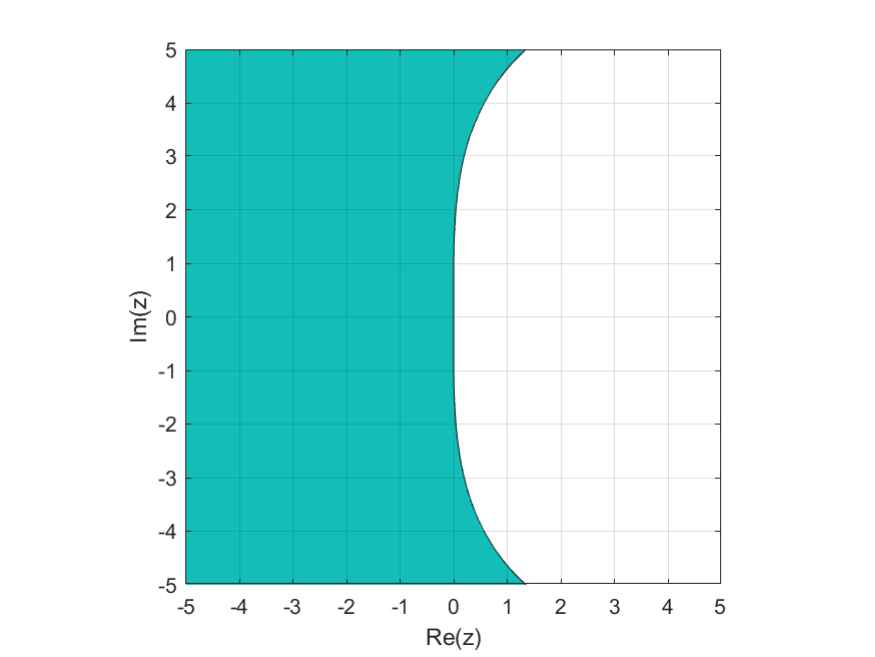}}
\centerline{$\beta_3=1$}
\end{minipage}
\begin{minipage}{0.32\linewidth}
\centerline{\includegraphics[width=\textwidth]{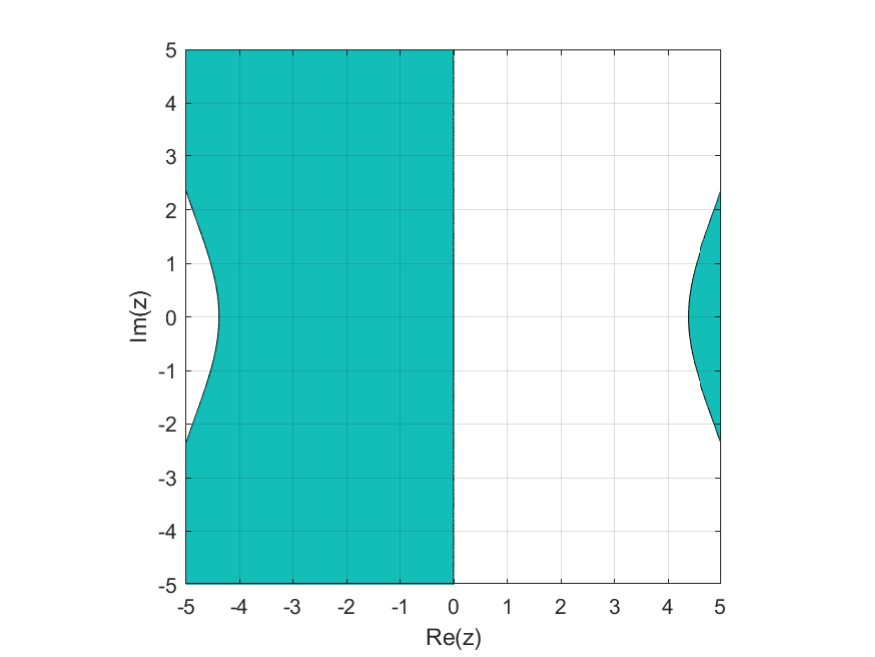}}
\centerline{$\beta_3=\frac54$}
\end{minipage}

\caption{The green parts show the region of absolute stability of the new schemes with $\beta_1=-\frac14,\;\beta_2=\frac12$.}
\label{stable_3order_ex1}
\end{figure}

\begin{figure}[htp]
\centering
\begin{minipage}{0.32\linewidth}
\centerline{\includegraphics[width=\textwidth]{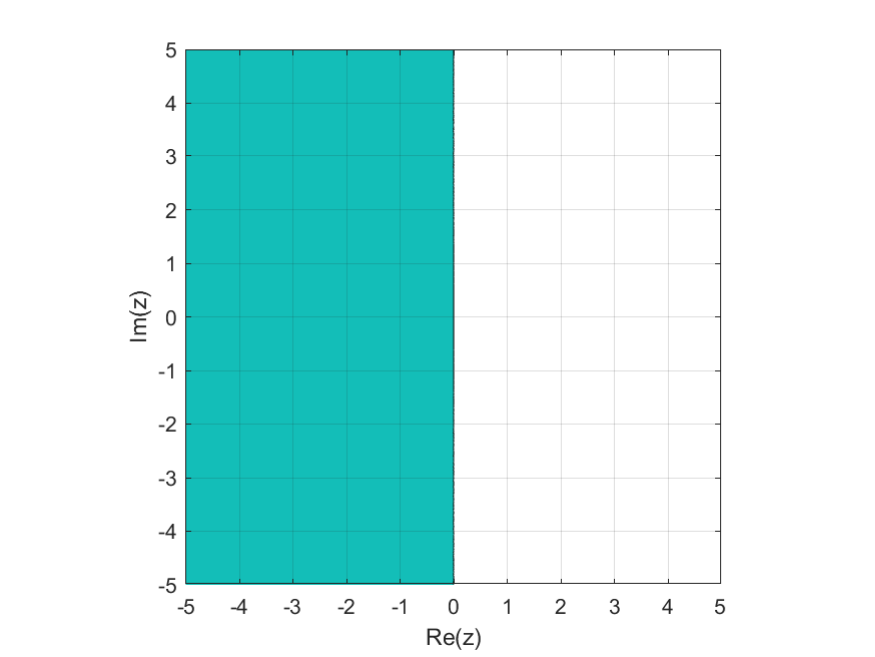}}
\centerline{$\beta_3=\frac34$}
\end{minipage}
\begin{minipage}{0.32\linewidth}
\centerline{\includegraphics[width=\textwidth]{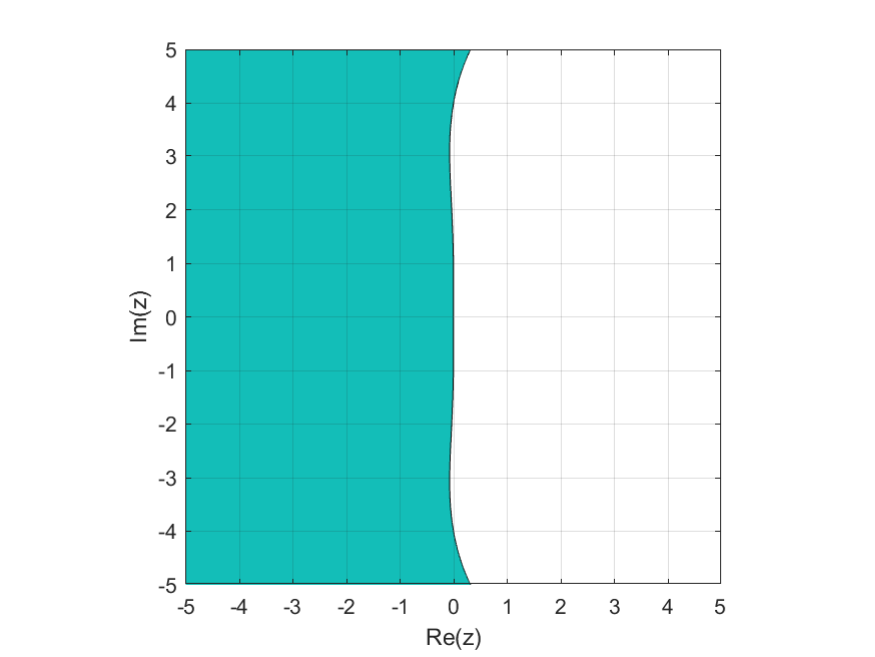}}
\centerline{$\beta_3=1$}
\end{minipage}
\begin{minipage}{0.32\linewidth}
\centerline{\includegraphics[width=\textwidth]{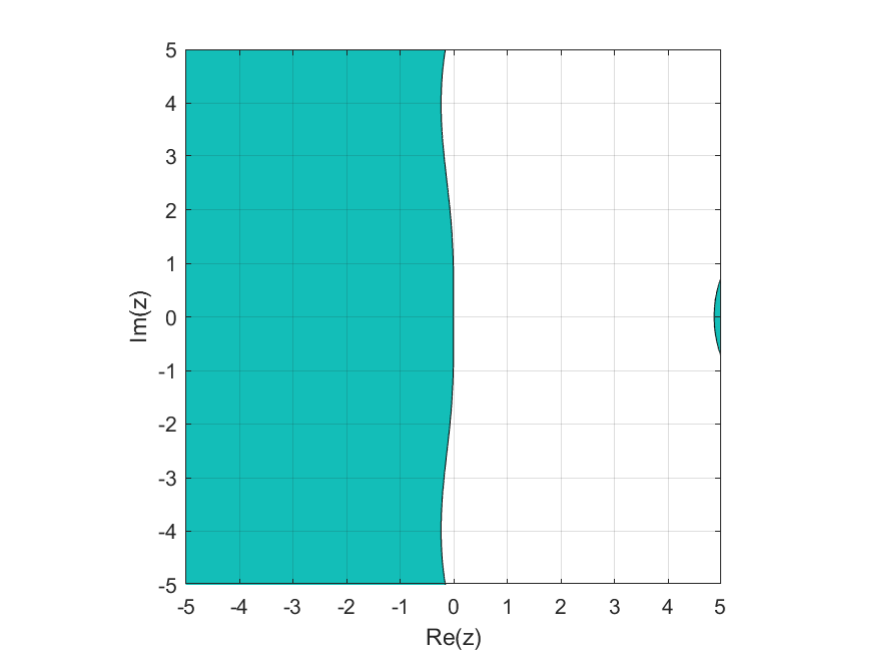}}
\centerline{$\beta_3=\frac54$}
\end{minipage}

\caption{The green parts show the region of absolute stability of the new schemes with $\beta_1=\frac14,\;\beta_2=\frac12$.}
\label{stable_3order_ex2}
\end{figure}

In terms of the experimental results on the stability domain, selecting appropriate parameters $\beta_1$, $\beta_2$, and $\beta_3$ will have a significant impact on stability. Next, we will establish proofs of A-stable under certain conditions. For the sake of narrative convenience, we set $z=\lambda\Delta$ and 
\begin{equation}
\begin{aligned}
g_1(z)&=((1-\beta_1)z+1)((1-\beta_2)z+1)((1-\beta_2)z+1)-(\beta_1+\beta_2+\beta_3-3)z+5, \\
g_2(z)&=(1-\beta_1z)(1-\beta_2z)(1-\beta_3z)-(\beta_1+\beta_2+\beta_3)z+5.\\
\end{aligned}
\end{equation}

\begin{theorem}\label{th6}
If there exists three distinct numbers $\beta_1,\;\beta_2$ and $\beta_3$ to satisfy
\begin{equation}\label{con1}
\beta_3=1,\;2(\beta_1+\beta_2)\geq1+6\beta_1\beta_2>0,
\end{equation}
then the following inequality always holds true for all complex numbers $Re(z)\leq0$:
\begin{equation}
\label{teq461}
\left\vert\frac{g_1(z)}{g_2(z)}\right\vert\leq1.
\end{equation}
which indicates that the new one-step scheme \eqref{eq13} is A-stable.
\end{theorem}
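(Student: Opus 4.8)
The plan is to mirror the two-stage template already used for scheme \eqref{eq5} (Theorem~\ref{th3} together with the complex extension that follows it), now adapted to the cubic denominator produced by the three-layer scheme \eqref{eq13}. First I would impose $\beta_3=1$ and simplify: the factor $((1-\beta_3)z+1)$ collapses to $1$, so the numerator drops to a quadratic $g_1(z)=(1-\beta_1)(1-\beta_2)z^2+2(2-\beta_1-\beta_2)z+6$, while $(1-\beta_3 z)=1-z$ leaves the denominator as the cubic $g_2(z)=-\beta_1\beta_2 z^3+(\beta_1\beta_2+\beta_1+\beta_2)z^2-2(\beta_1+\beta_2+1)z+6$. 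Writing $p=\beta_1+\beta_2$ and $q=\beta_1\beta_2$, the hypothesis \eqref{con1} reads $2p\ge 1+6q>0$, and one notes that $g_2$ is exactly the test-mode symbol of $\mathcal F_{\beta_1,\beta_2,\beta_3}^{-1}$. As in the earlier theorems, the claim then splits into two parts: the denominator must not vanish on $Re(z)\le 0$, and, granted that, \eqref{teq461} is equivalent to $h(z):=g_1(z)g_1(\overline z)-g_2(z)g_2(\overline z)\le 0$.

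Next I would establish the non-vanishing $g_2(z)\neq 0$ for $Re(z)\le 0$, the analogue of the contradiction step opening the proof of Theorem~\ref{th3}. Because $g_2$ is now cubic there is no usable closed form for its roots, so instead I would verify by a Routh--Hurwitz test applied to $g_2(-w)$ that all three roots of $g_2$ lie in the open right half-plane, reading off the sign conditions on the Hurwitz minors from $q>-\frac16$ and $2p-1\ge 6q$; the origin is excluded at once since $g_2(0)=6$.

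I would then dispose of the real axis, where the core algebra lives. For real $x$ one has $h(x)=(g_1-g_2)(g_1+g_2)$, and a direct computation gives the pleasant factorization $g_1(x)-g_2(x)=x\bigl(qx^2+(1-2p)x+6\bigr)$ together with $g_1(x)+g_2(x)=-qx^3+(1+2q)x^2+(2-4p)x+12$. The target $h(x)\le 0$ for $x\le 0$ is thus equivalent to the product of these two cofactors being nonnegative on $(-\infty,0]$. When $q\ge 0$ the quadratic cofactor is bounded below by $6$ there, since $1-2p\le -6q\le 0$ forces $(1-2p)x\ge 0$; the delicate range is $-\frac16<q<0$, where this quadratic acquires a negative root and must be paired against the corresponding sign change of the cubic cofactor, the whole balance being controlled by $2p\ge 1+6q>0$. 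Finally I would extend to the strip exactly as in \eqref{eq15}: set $z=re^{i\theta}$ with $r\ge0$ and $\theta\in[\frac\pi2,\frac{3\pi}2]$, so $Re(z)=r\cos\theta\le0$, and by repeated use of $\cos^2\theta+\sin^2\theta=1$ reorganize $h$ as the real-case polynomial evaluated at $r\cos\theta$ (which is $\le 0$ by the previous step) plus a remainder collecting every term carrying a factor $\sin^2\theta$.

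The main obstacle, I expect, is heavier here than in the quadratic setting and has two faces. First, the denominator analysis is genuinely sign-of-$q$ sensitive: locating the roots of the cubic $g_2$ in the right half-plane depends delicately on whether $\beta_1\beta_2$ is positive or negative, so the hypothesis \eqref{con1} must be used decisively rather than cosmetically, and it is precisely here that it does its real work. Second, whereas in \eqref{eq14}--\eqref{eq15} the $\sin^2\theta$ remainder had only two manifestly non-positive terms, the present $h$ has degree six in $r$, so the remainder breaks into several mixed-degree groups whose non-positivity is not automatic; each must be forced term-by-term from $q>-\frac16$, $2p-1\ge 6q$ and $\cos\theta\le0$. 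Once these two points are secured, the remaining manipulations are the routine algebra already templated by the proofs of Theorem~\ref{th3} and its complex companion.
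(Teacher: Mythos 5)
Your overall strategy is the paper's own: reduce \eqref{teq461} to $h(z)=g_1(z)g_1(\overline z)-g_2(z)g_2(\overline z)\le 0$, pass to $z=re^{i\theta}$ with $\theta\in[\frac\pi2,\frac{3\pi}2]$, and split $h$ into the real-axis polynomial evaluated at $r\cos\theta$ plus a remainder carrying $\sin^2\theta$. Your simplifications of $g_1$ and $g_2$ under $\beta_3=1$ are correct, and your factor $g_1(x)-g_2(x)=x\bigl(qx^2+(1-2p)x+6\bigr)$ is exactly the quadratic block $\beta_1\beta_2 r^2\cos^2\theta+(1-2\beta_1-2\beta_2)r\cos\theta+6$ that organizes the first two terms of the paper's expansion of $h$; on the range $q=\beta_1\beta_2\ge 0$ your argument and the paper's are the same proof in different notation, yours being somewhat more explicit about the denominator and the real-axis product.

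The genuine gap is the regime $-\tfrac16<q<0$, which the hypothesis \eqref{con1} permits and which you flag as ``delicate'' but cannot actually close. Your Routh--Hurwitz step fails there for a structural reason: $g_2(-w)=qw^3+(q+p)w^2+2(p+1)w+6$ has leading coefficient $q<0$ and constant term $6>0$, so it necessarily has a positive real root $w_0$, i.e.\ $g_2$ vanishes at the negative real point $-w_0$. Concretely, take $\beta_1=-\tfrac1{10}$, $\beta_2=\tfrac9{10}$, $\beta_3=1$: then $2(\beta_1+\beta_2)=1.6\ge 1+6\beta_1\beta_2=0.46>0$, yet $g_2(z)=0.09z^3+0.71z^2-3.6z+6$ has a zero near $z\approx-11.8$ at which $g_1(z)=0.11z^2+2.4z+6\approx-7\neq0$, so $\left\vert g_1(z)/g_2(z)\right\vert$ is unbounded on the negative real axis and \eqref{teq461} fails; correspondingly your ``balance'' of the two real cofactors cannot be made to work since $h$ is a perfect square at that zero. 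This is not a defect you introduced --- the paper's own listed inequalities, e.g.\ $-\beta_1\beta_2 r\cos\theta+1\ge0$ and the positivity of the quadratic block, also silently require $\beta_1\beta_2\ge0$ --- but your proposal explicitly promises to resolve a case that is in fact false as stated. With the additional restriction $\beta_1\beta_2\ge 0$ your plan is sound and coincides with the paper's proof.
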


\begin{proof}
%
%
%
The inequality \eqref{teq461} is equivalent with the following
\begin{equation}
\label{teq462}
h(z)=g_1(z)g_1(\bar z)-g_2(z)g_2(\bar z)\leq0.
\end{equation}

Define $z=re^{i\theta}$, where $r\geq0$ and $\theta\in[\frac{\pi}{2},\frac{3\pi}{2}]$. We can derive
\begin{equation}
\begin{aligned}
h(z)
=&r^3\cos^3\theta(\beta_1\beta_2r^2\cos^2\theta+(1-2\beta_1-2\beta_2)r\cos\theta+6)(-\beta_1\beta_2r\cos\theta+1) \\
&+2r\cos\theta(\beta_1\beta_2(r\cos\theta)^2+(1-2\beta_1-2\beta_2)r\cos\theta+6)^2 \\
&+8r^6(\cos\theta-1)(\cos\theta+1)\cos\theta(-\beta_1^2+\beta_1\beta_2+\beta_1-\beta_2^2+\beta_2-1) \\
&+r^5(\cos\theta-1)(\cos\theta+1)(1+\cos^2\theta)(2\beta_1+2\beta_2-6\beta_1\beta_2-1) \\
&+8\beta_1\beta_2r^5\cos^2\theta(\cos\theta-1)(\cos\theta+1)(1+\beta_1+\beta_2) \\
&-2\beta_1\beta_2r^4\cos\theta(\cos\theta-1)(\cos\theta+1)(1+\cos^2\theta)(\beta_1+\beta_2+\beta_1\beta_2) \\
&+r^3\beta_1^2\beta_2^2(\cos\theta-1)(\cos\theta+1)(\cos\theta^4+\cos^2\theta+1).
\end{aligned}
\end{equation}

When $2(\beta_1+\beta_2)\geq1+6\beta_1\beta_2,\;r\geq0$ and $\theta\in[\frac{\pi}{2},\frac{3\pi}{2}]$, it is obviously to obtain
\begin{equation}
\begin{aligned}
\beta_1\beta_2r^2\cos^2\theta+(1-2\beta_1-2\beta_2)r\cos\theta+6\geq0, \\
-\beta_1\beta_2r\cos\theta+1\geq0, \\
(\cos\theta-1)(\cos\theta+1)\leq0, \\
-\beta_1^2+\beta_1\beta_2+\beta_1-\beta_2^2+\beta_2-1\leq0. \\
\end{aligned}
\end{equation}
By regarding $\beta_1\beta_2r^2\cos^2\theta+(1-2\beta_1-2\beta_2)r\cos\theta+6$ and $-\beta_1^2+\beta_1\beta_2+\beta_1-\beta_2^2+\beta_2-1$ in the above expressions as quadratic functions of $r\cos\theta$ and $\beta_1$ respectively, we can readily obtain the desired conclusion. Therefore, we conclude $h(z)\leq0,\;\forall Re(z)\leq0,\;g_2(z)\neq0$.
\end{proof}

We proceed to conduct parametric studies with varying $\beta_i$ $(i=1,2,3)$ values to quantify their influence on stability regions. Fig.~\ref{stable_3order_ex3} shows stability regions for RK3, the new proposed scheme \eqref{eq13} with $\beta_1=0,\;\beta_2=\frac12,\;\beta_3=1$ and $\beta_1=0.3,\;\beta_2=2,\;\beta_3=1$. It can be observed that when $\beta_1$, $\beta_2$ and $\beta_3$ satisfy the derived A-stable condition \eqref{con1}, the stability region undergoes significant enlargement compared to other cases.
\begin{figure}[htp]
\centering
\begin{minipage}{0.32\linewidth}
\centerline{\includegraphics[width=\textwidth]{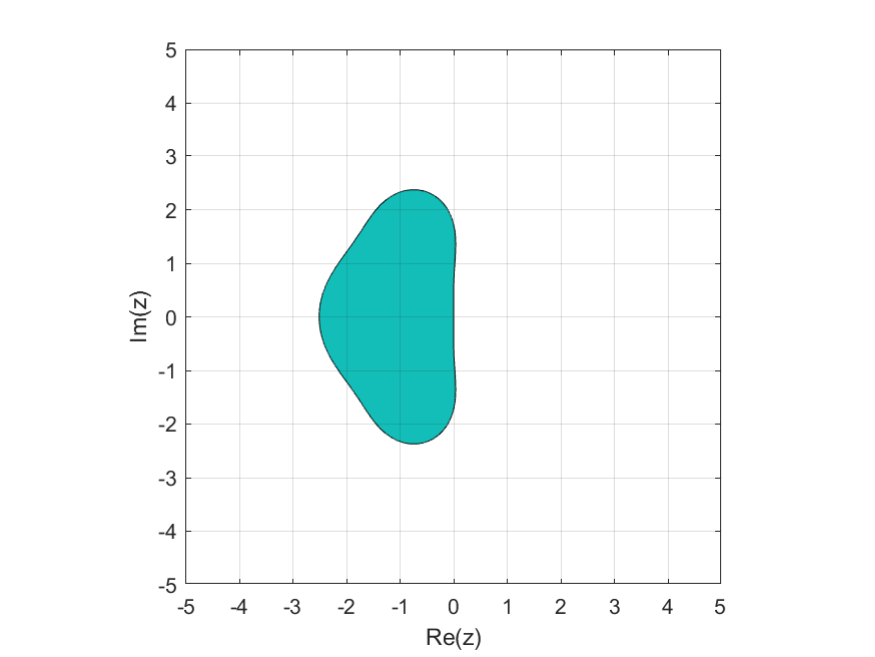}}
\centerline{RK3}
\end{minipage}
\begin{minipage}{0.32\linewidth}
\centerline{\includegraphics[width=\textwidth]{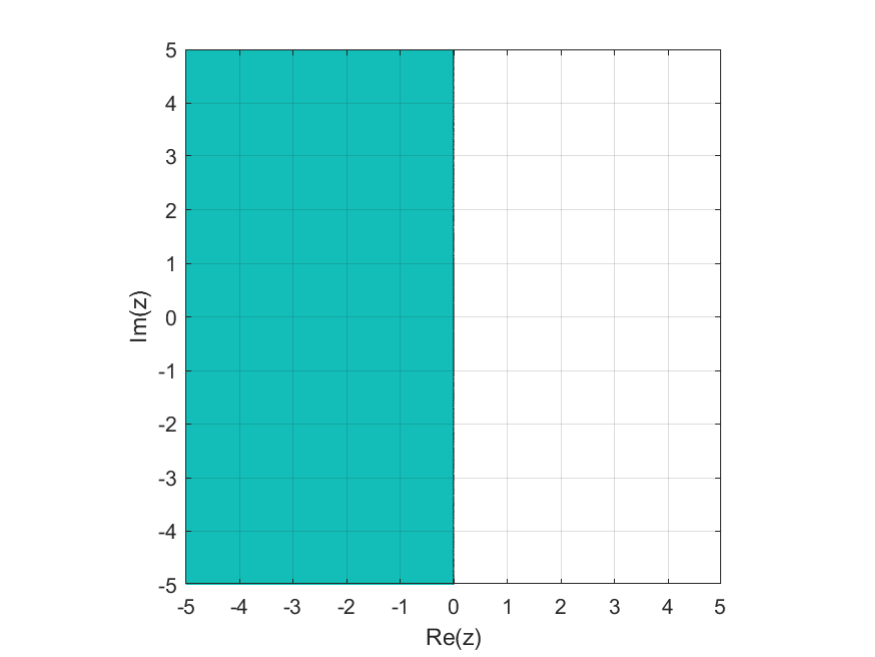}}
\centerline{$\beta_1=0,\;\beta_2=\frac12,\;\beta_3=1$}
\end{minipage}
\begin{minipage}{0.32\linewidth}
\centerline{\includegraphics[width=\textwidth]{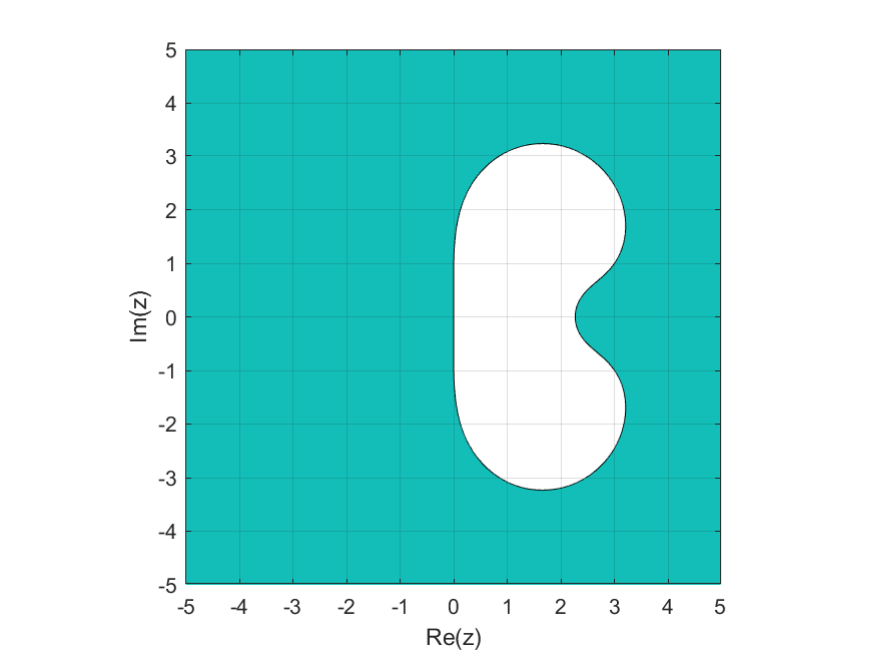}}
\centerline{$\beta_1=0.3,\;\beta_2=2,\;\beta_3=1$}
\end{minipage}

\caption{The green parts show the region of absolute stability.}
\label{stable_3order_ex3}
\end{figure}

\section{Numerical examples}
In this section, we report the numerical results which are obtained from the implementation of the new proposed one-step schemes. We first verify the stability and temporal convergence rates with a smooth initial data for some ordinary and partial differential equations. Next we provide some numerical approximations of some classical linear models such as convection dominated diffusion equation and nonlinear models such as Allen-Cahn equation to validate our theoretical results.

\textbf{Example.1.} We validate the stability and convergence order of the new one-step schemes for a simple system of ordinary differential equations
\begin{equation}
\begin{aligned}
&u_t(t)+\lambda u(t)=0,\;t\in(0,1], \\
&u(0)=1.
\end{aligned}
\end{equation}

We can readily calculate the exact solution to satisfy $u(t)=e^{-\lambda t}$ and observe that this solution exhibits sharp variation in a neighborhood of the origin. To investigate the dependence of stability on parameters $\beta_i$ in our proposed one-step method, numerical experiments were conducted with several different parameters $\lambda=1000$, $10000$, $100000$ with $\Delta t=\frac{1}{32}$. In Fig.~\ref{ex1} , we plot the exact solution and the numerical solution obtained by the one-step scheme \eqref{eq5} with different $\beta_1$ and $\beta_2$. When $\beta_1=\frac49$, $\beta_2=2$ and $\beta_1=\frac{3-\sqrt3}{6}$, $\beta_2=\frac{3+\sqrt3}{6}$, the scheme \eqref{eq5} satisfies A-stable but not L-stable. When $\beta_1=0.1$ and $\beta_2=1$, the scheme \eqref{eq5} satisfies L-stable. We observe that as the parameter $\lambda$ increases, the L-stable scheme converges at an extremely fast rate, whereas the other two schemes either converge very slowly or exhibit significant fluctuations before convergence. Although the two schemes that only satisfy A-stable have a higher order of convergence than the L-stable scheme, their performance is unsatisfactory based on the results.
\begin{figure}[htp]
\centering
\begin{minipage}{0.32\linewidth}
\centerline{\includegraphics[width=\textwidth]{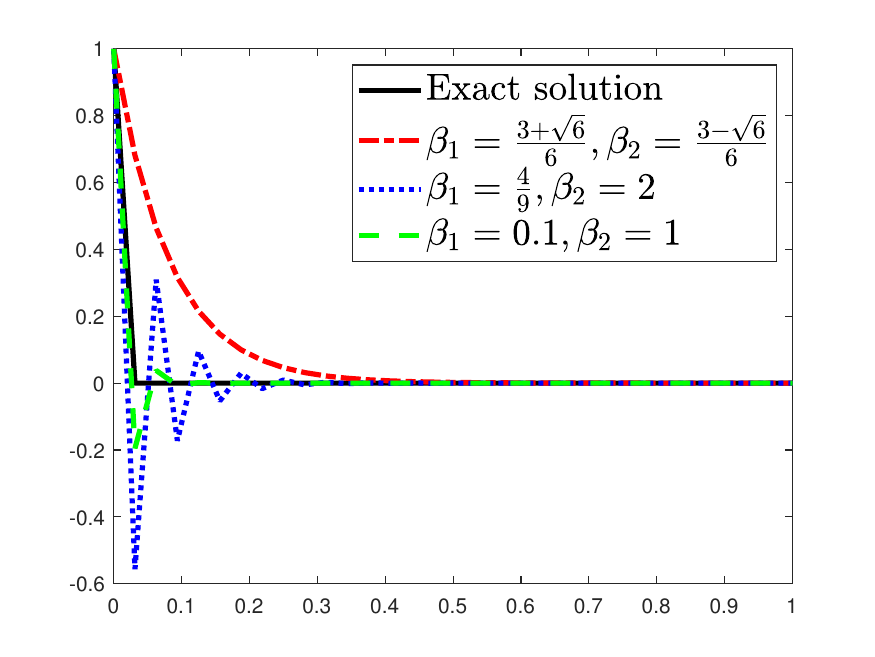}}
\centerline{$\lambda=1000$}
\end{minipage}
\begin{minipage}{0.32\linewidth}
\centerline{\includegraphics[width=\textwidth]{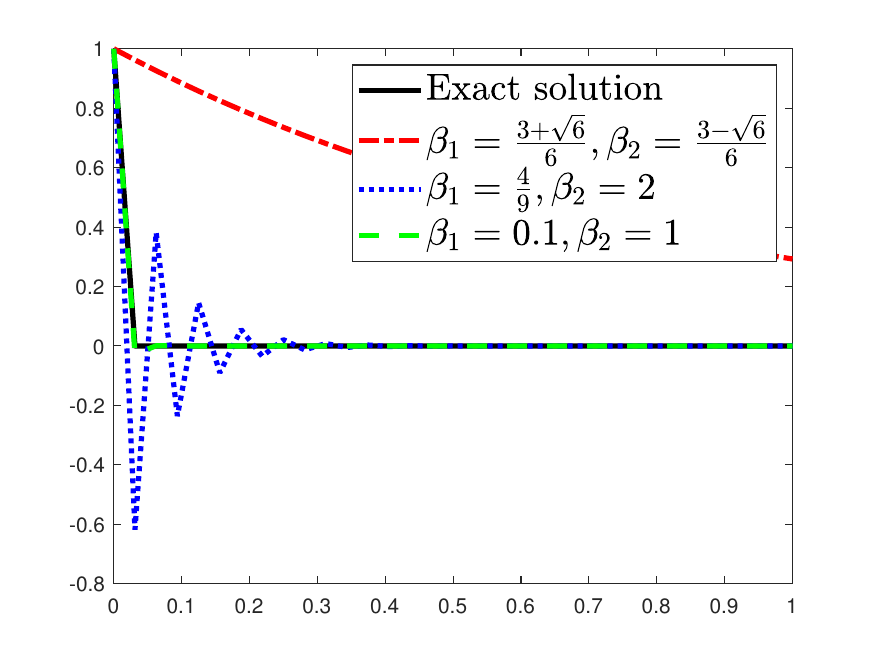}}
\centerline{$\lambda=10000$}
\end{minipage}
\begin{minipage}{0.32\linewidth}
\centerline{\includegraphics[width=\textwidth]{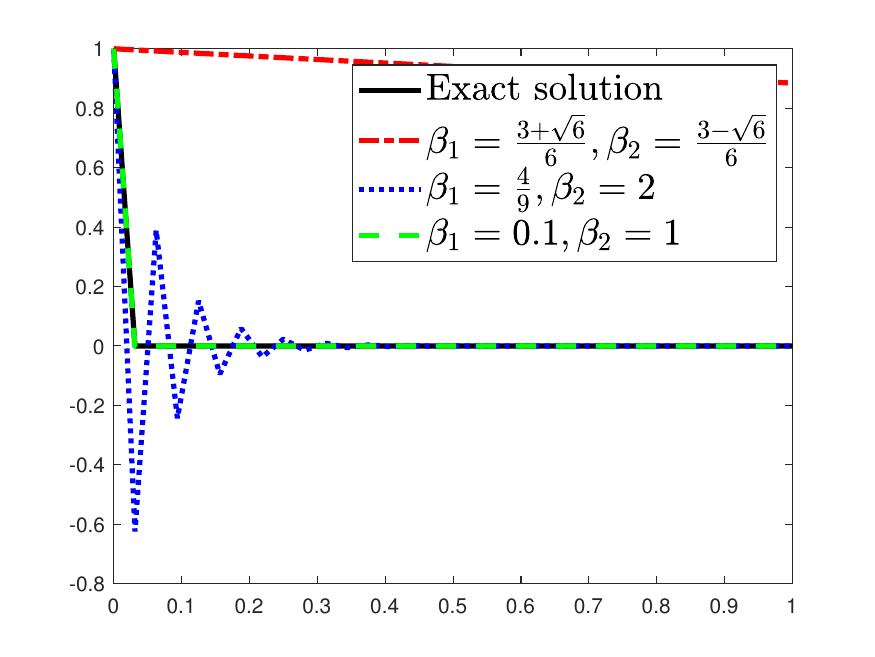}}
\centerline{$\lambda=100000$}
\end{minipage}

\caption{The exact solution and the numerical solution obtained by the second-order scheme with different $\beta_1$, $\beta_2$ and $\lambda$.}
\label{ex1}
\end{figure}

\textbf{Example.2.} We give the following example to test the convergence rates of the proposed schemes \eqref{eq5} and \eqref{eq13}. Consider the following partial differential equation:
\begin{equation}
\begin{aligned}
&u_t(x,y,t)-\Delta u(x,y,t)=f(x,y,t),\;t\in(0,1), \\
&u(x,y,0)=0,
\end{aligned}
\end{equation}
where $(x,y)\in[-\pi,\pi]\times[-\pi,\pi]$. We choose a forcing function such that the exact solution is $u(x,y,t)=\sin x\sin y\sin t$. We use the Fourier Galerkin method with $N_x=N_y=128$ in space so that the spatial discretization error is negligible compared to the time discretization error. 
We plot the $L^2$ errors between the numerical solution and the exact solution at $T = 1$ with different time step sizes in Fig.~\ref{ex2_1} by using the second-order schemes \eqref{eq5} with different $\beta_1$ and $\beta_2$. One can observe that our numerical scheme \eqref{eq5} is asymptotically second-order accurate in time for all variables. 

\begin{figure}[htp]
\centering
\begin{minipage}{0.32\linewidth}
\centerline{\includegraphics[width=\textwidth]{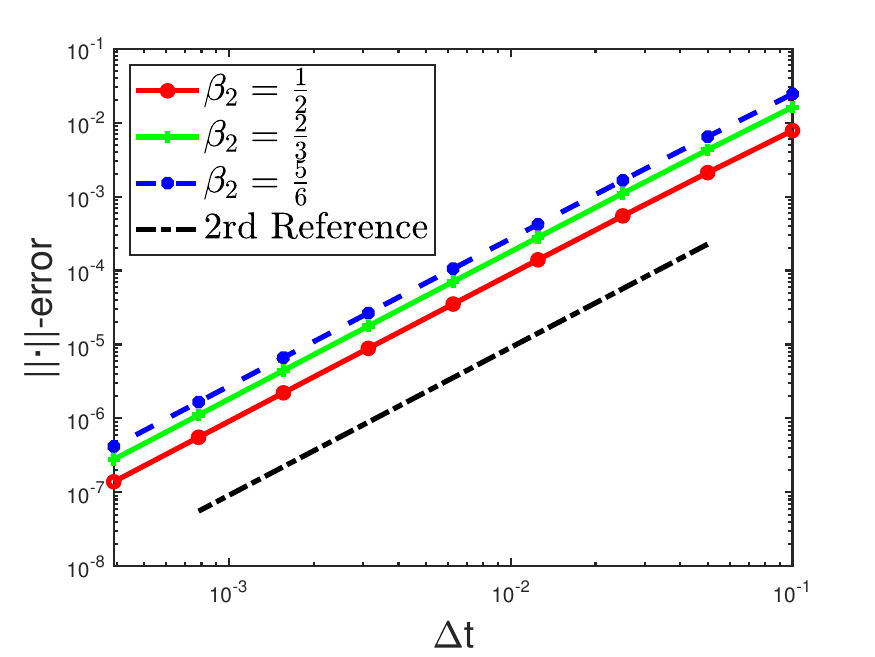}}
\centerline{$\beta_1=1$}
\end{minipage}
\begin{minipage}{0.32\linewidth}
\centerline{\includegraphics[width=\textwidth]{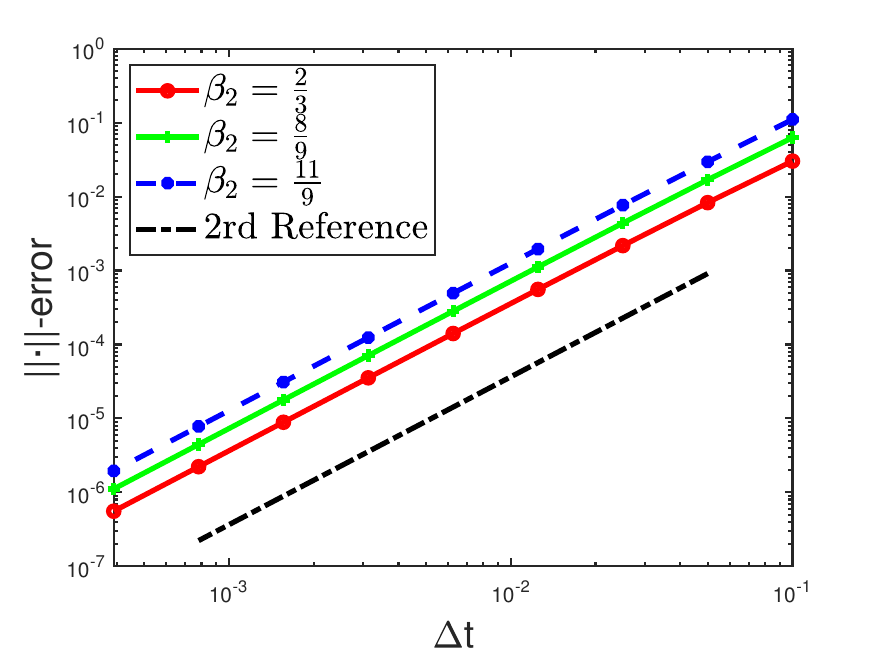}}
\centerline{$\beta_1=2$}
\end{minipage}
\begin{minipage}{0.32\linewidth}
\centerline{\includegraphics[width=\textwidth]{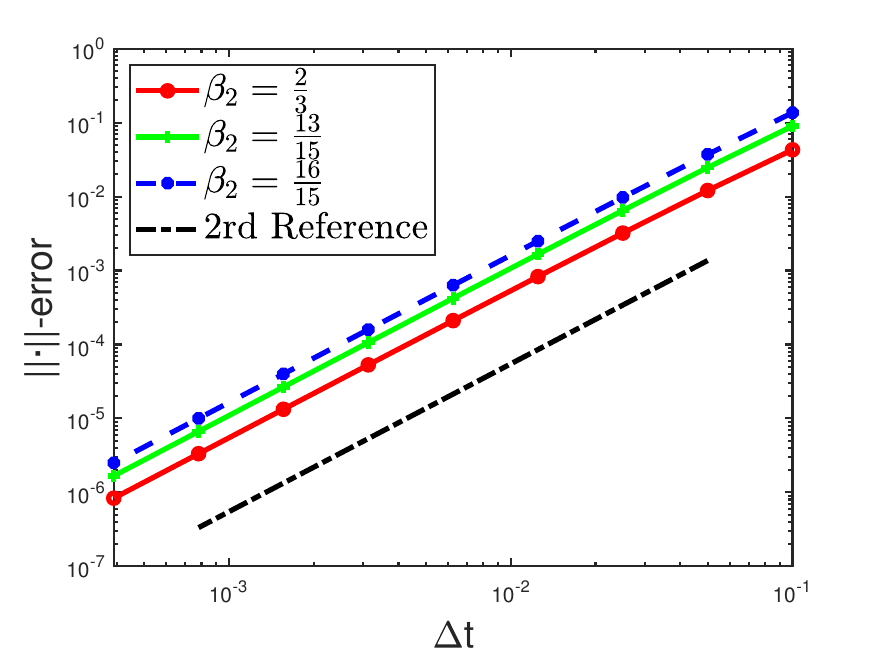}}
\centerline{$\beta_1=3$}
\end{minipage}

\caption{Temporal convergence rates for scheme \eqref{eq5} with different $\beta_1$ and $\beta_2$.}
\label{ex2_1}
\end{figure}

With $\beta_1$ and $\beta_2$ selected to satisfy $3(2\beta_1-1)(2\beta_2-1)=-1$, Theorem \ref{thm2} shows that it will improve the convergence order to 3 under this condition. The first figure in Fig.~\ref{ex2_2} demonstrates that the convergence order of scheme \eqref{eq5} reaches indeed the third order when $\beta_1$ and $\beta_2$ conform to the prescribed constraint. Besides, the second figure in Fig.~\ref{ex2_2} shows that scheme \eqref{eq5} will be the fourth order when $\beta_1=\frac{3+\sqrt{3}}{6}$ and $\beta_2=\frac{3-\sqrt{3}}{6}$ which is consistent with the theoretical results in Theorem \ref{thm2}.

\begin{figure}[htp]
\centering
\begin{minipage}{0.32\linewidth}
\centerline{\includegraphics[width=\textwidth]{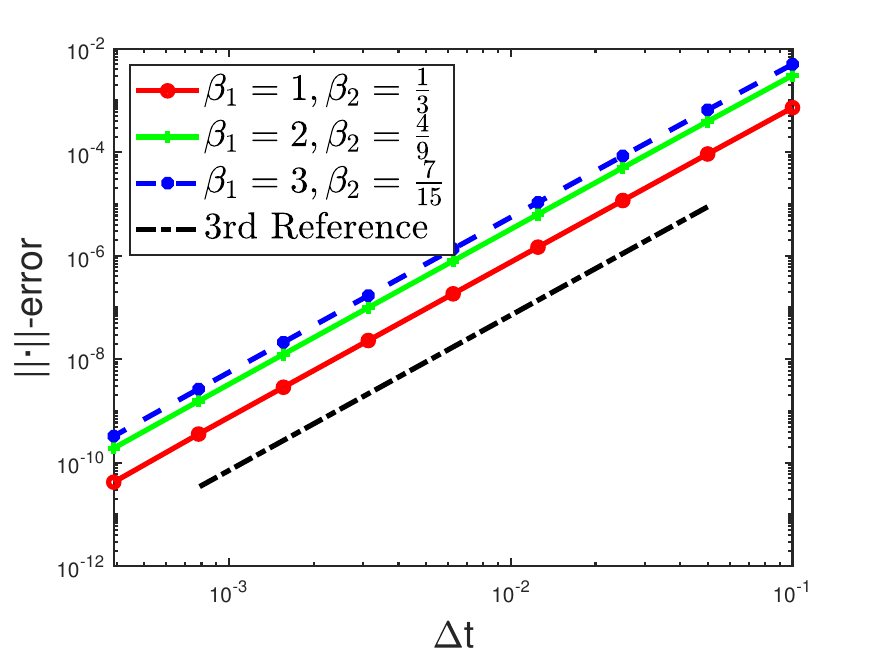}}
\centerline{third order}
\end{minipage}
\begin{minipage}{0.32\linewidth}
\centerline{\includegraphics[width=\textwidth]{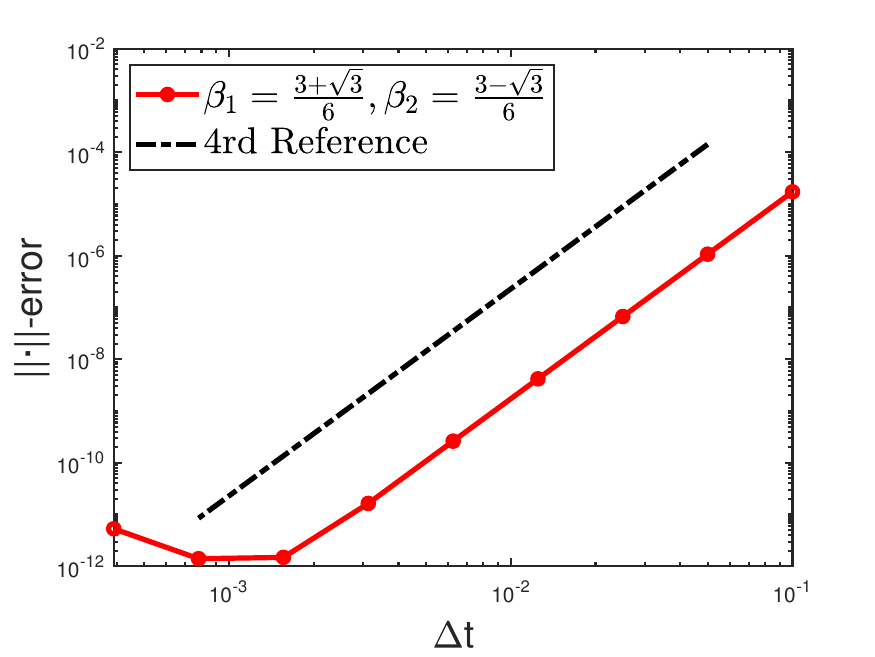}}
\centerline{fourth order}
\end{minipage}

\caption{The superconvergence of the proposed one-step scheme \eqref{eq5} when $\beta_1$ and $\beta_2$ satisfy the prescribed constraint.}
\label{ex2_2}
\end{figure}

Next, we continue to use Example 2 to test the third-order convergence of the second one-step scheme \eqref{eq13}. As demonstrated in  Fig.~\ref{ex2_3}, the A-stable parameter selection of $\beta_i$ elevates the scheme to third-order convergence. Meanwhile, optimized parameter values of $\beta_1$, $\beta_2$ and $\beta_3$ elevate the order to fourth- and sixth-order.

\begin{figure}[htp]
\centering
\begin{minipage}{0.32\linewidth}
\centerline{\includegraphics[width=\textwidth]{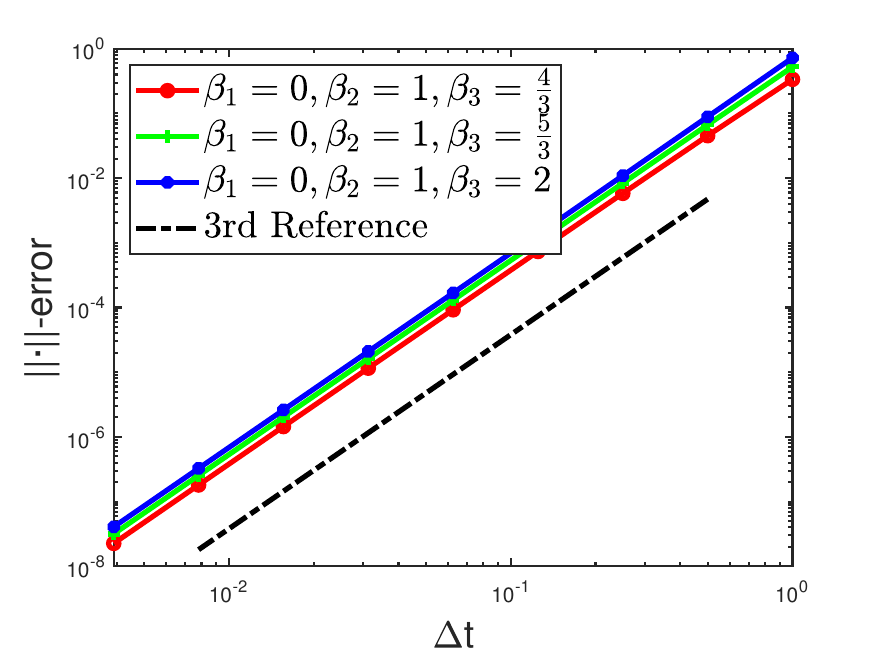}}
\centerline{third order}
\end{minipage}
\begin{minipage}{0.32\linewidth}
\centerline{\includegraphics[width=\textwidth]{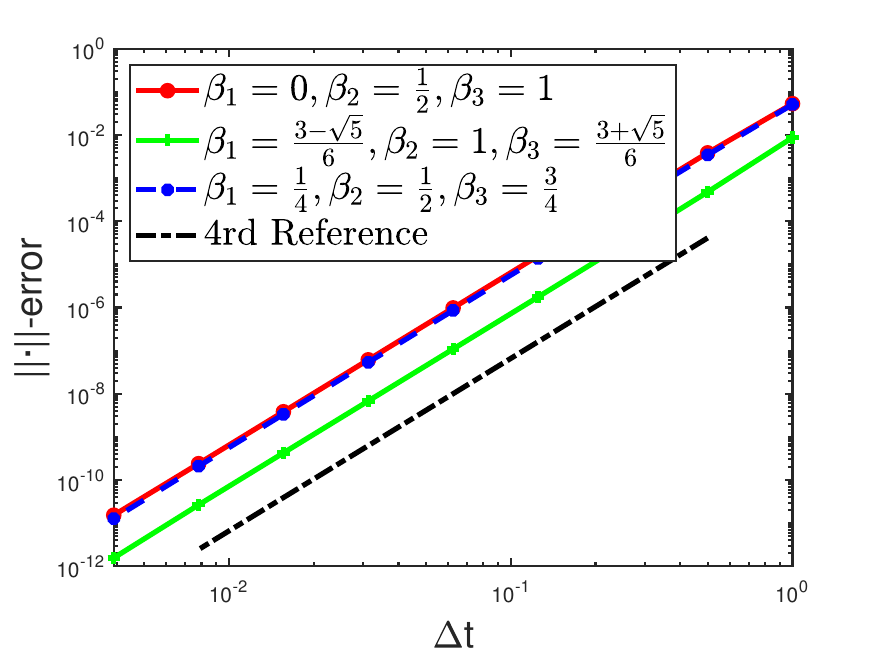}}
\centerline{fourth order}
\end{minipage}
\begin{minipage}{0.32\linewidth}
\centerline{\includegraphics[width=\textwidth]{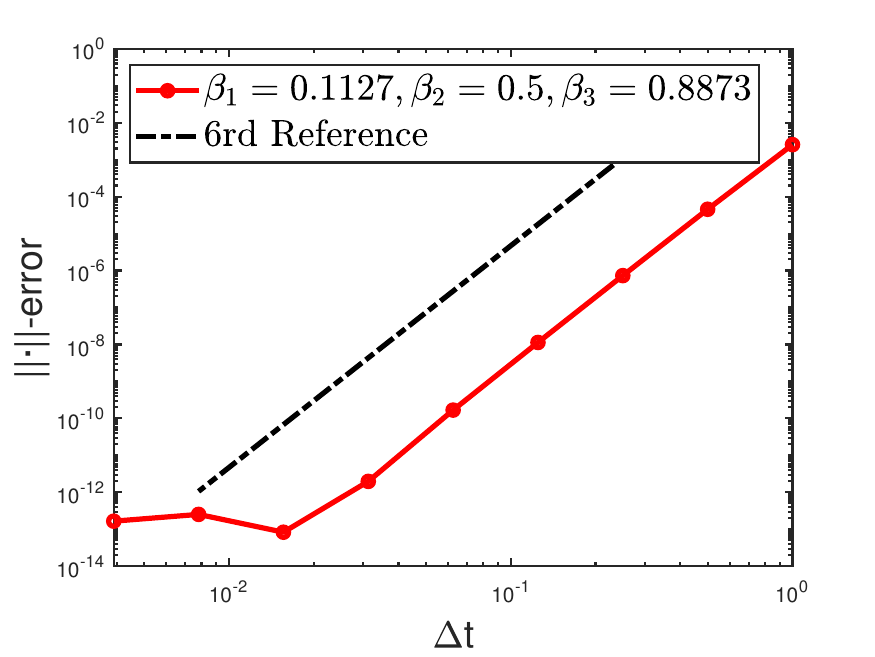}}
\centerline{sixth order}
\end{minipage}

\caption{The convergence and superconvergence of the proposed one-step scheme \eqref{eq13} when $\beta_1$, $\beta_2$ and $\beta_3$ satisfy the prescribed constraint.}
\label{ex2_3}
\end{figure}

\textbf{Example.3.} In this experiment we consider the proposed one-step scheme \eqref{eq5} to solve the following convection dominated diffusion problems
\begin{equation*}
c_t+\mathbf u\cdot\nabla c-\nabla\cdot(\mathbf K\nabla c)=0,\quad (x,y,t)in\Omega\times(0,T],
\end{equation*}
with periodic boundary conditions in the square domain $\Omega=[0,1]\times[0,1]$ with the velocity field of $\mathbf u=(u_x,u_y)=(\sin(\pi x),\sin(\pi y))$ satisfies $\nabla\cdot\mathbf u=0$ and $\mathbf K=\diag(\mathbf{K}_x,\mathbf{K}_y)$ is the diffusion tensor. We take the initial data to be
\begin{equation*}
c(x,y,0)=
\left\{
\begin{matrix}
1,&\;0.1\leq x\leq0.3,\;0.1\leq y\leq 0.3, \\
0,&\;otherise,
\end{matrix}
\right.
\end{equation*}
Then numerical experiments are carried out using various diffusion coefficients of $\mathbf K=2\times10^{-3}$ and $\mathbf K=5\times10^{-4}$. We use the Fourier Galerkin method with $N_x=N_y=128$ in space. The proposed one-step scheme with $\beta_1=\frac23,$ and $\beta_2=1$ is performed to attain numerical solution with time steps of $\Delta t=10^{-3}$, while the reference exact solution is evaluated using the fine mesh of $\Delta t=10^{-5}$.
to obtain the reference solution. Fig.~\ref{ex6_1} gives the contour plots of reference exact solution and numerical solution with $\mathbf{K}=2\times10^{-3}$ at $t=0.25$, $0.35$ and $0.5$. We proceed to examine the performance of our proposed scheme using a smaller diffusion coefficient $\mathbf{K}=5\times10^{-4}$ and its associated outcomes are illustrated in Fig.~\ref{ex6_3}. The results in Fig.~\ref{ex6_3} show the efficacy of our proposed L-stable scheme \eqref{eq5} in accurately tracking steep fronts without introducing numerical oscillations, even when convection effect is particularly prominent. All the numerical solutions are almost the same as the reference exact solutions at all the three time, which verifies the effectiveness of the proposed method. We proceed to examine the performance of our proposed scheme using both a larger diffusion coefficient, $\mathbf K = 5\times10^{-3}$, and a smaller one $\mathbf K = 5\times10^{-4}$. The associated outcomes are illustrated in Fig.~\ref{ex6_3}. Observing these figures, it becomes evident that the numerical solutions generated by our scheme exhibit good consistency with reference exact solutions under varying diffusion conditions. All the process observed in Fig.~\ref{ex6_1} and Fig.~\ref{ex6_3} is consistent with the findings reported in \cite{qin2023positivity}.

\begin{figure}[htp]
\centering
\begin{minipage}{0.32\linewidth}
\centerline{\includegraphics[width=\textwidth]{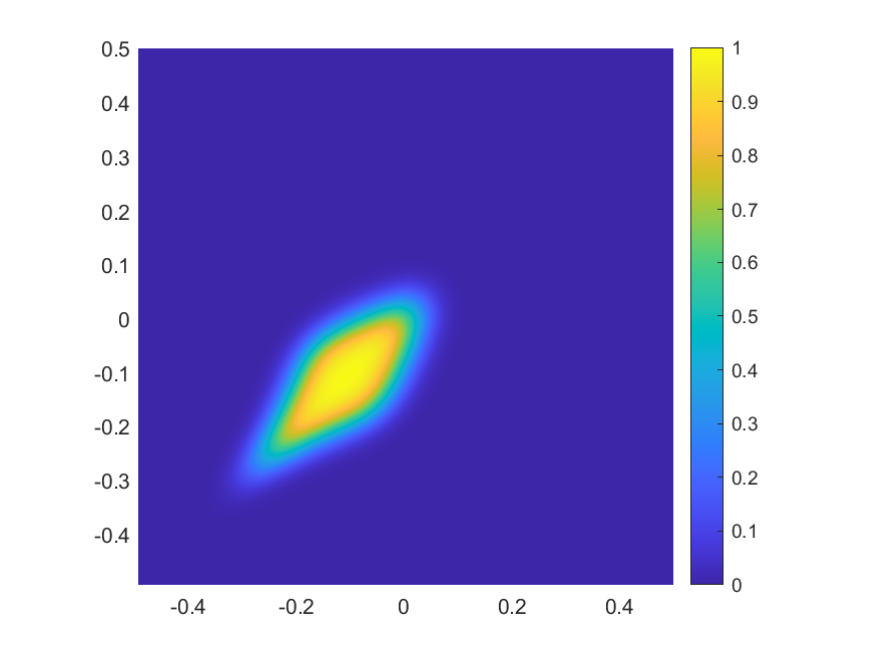}}
\centerline{t=0.25}
\end{minipage}
\begin{minipage}{0.32\linewidth}
\centerline{\includegraphics[width=\textwidth]{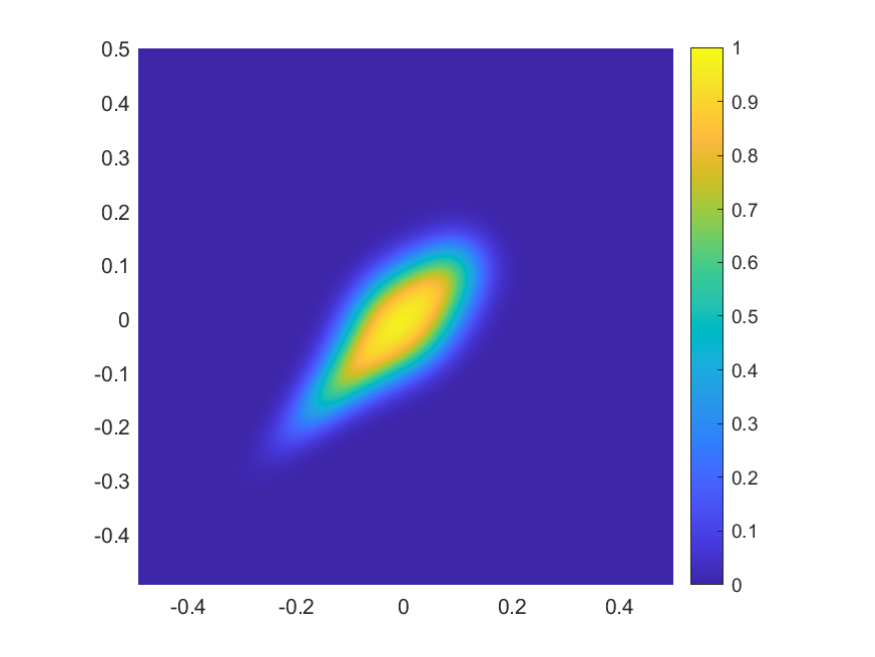}}
\centerline{t=0.35}
\end{minipage}
\begin{minipage}{0.32\linewidth}
\centerline{\includegraphics[width=\textwidth]{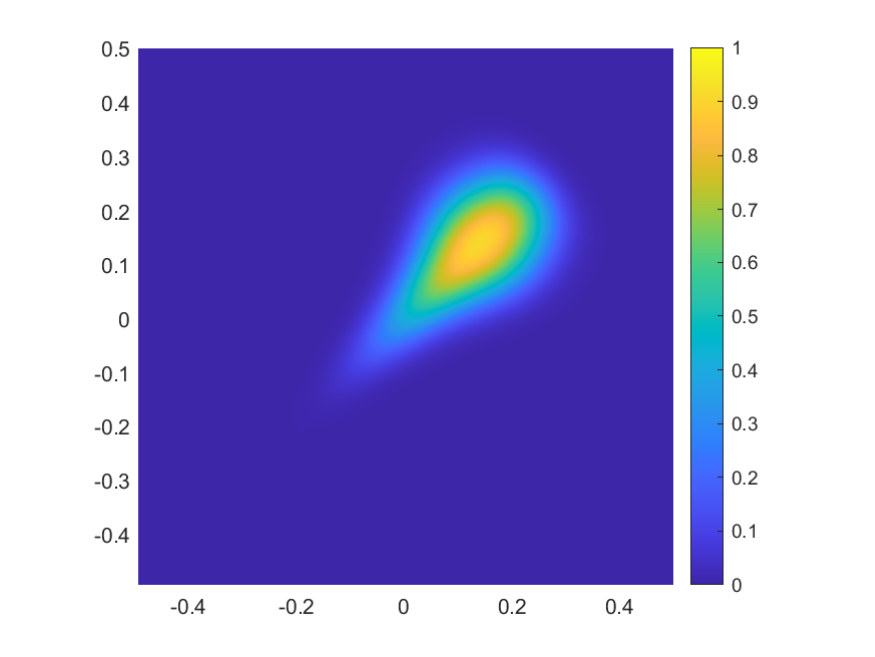}}
\centerline{t=0.5}
\end{minipage}
\centerline{Reference solution}

\begin{minipage}{0.32\linewidth}
\centerline{\includegraphics[width=\textwidth]{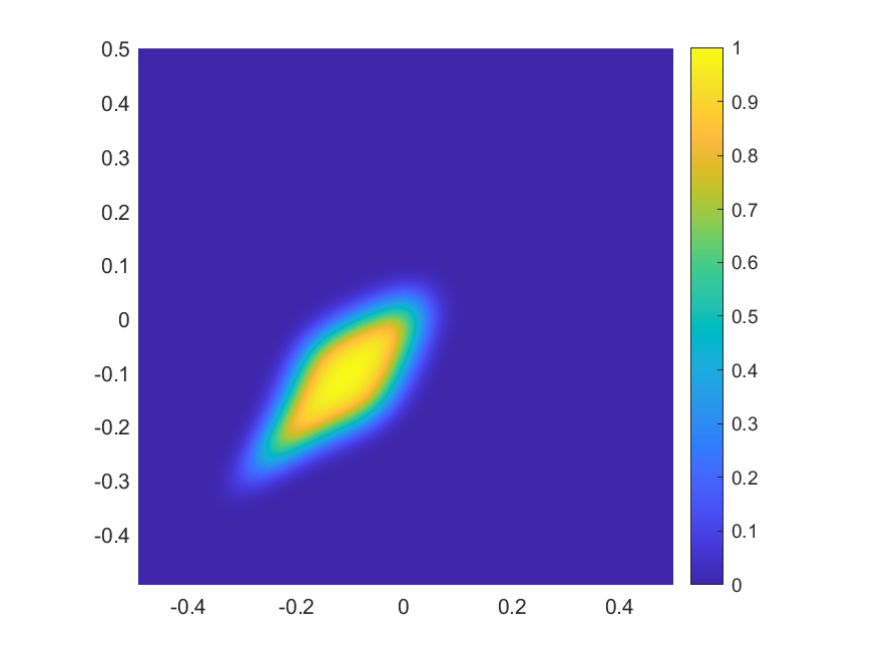}}
\centerline{t=0.25}
\end{minipage}
\begin{minipage}{0.32\linewidth}
\centerline{\includegraphics[width=\textwidth]{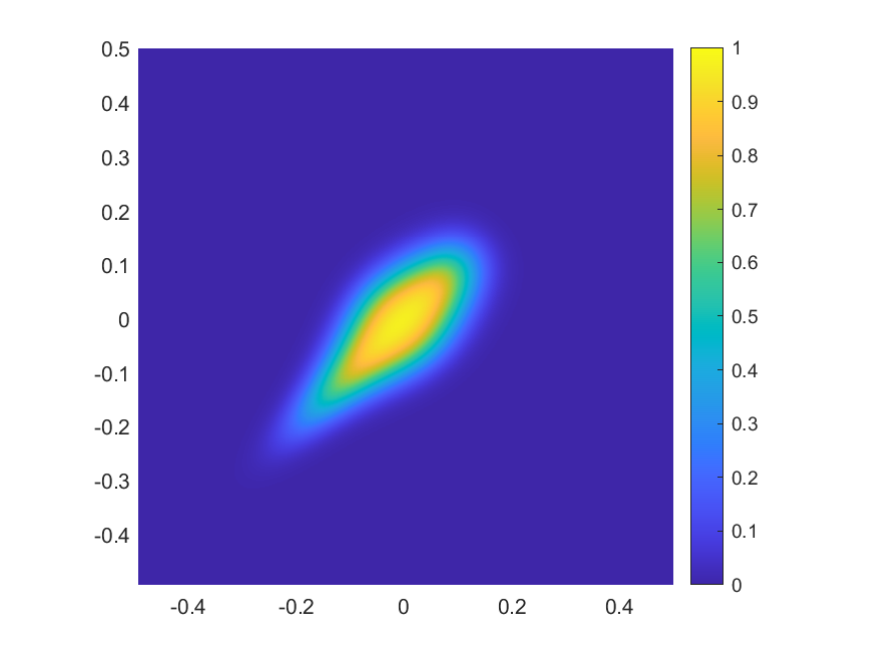}}
\centerline{t=0.35}
\end{minipage}
\begin{minipage}{0.32\linewidth}
\centerline{\includegraphics[width=\textwidth]{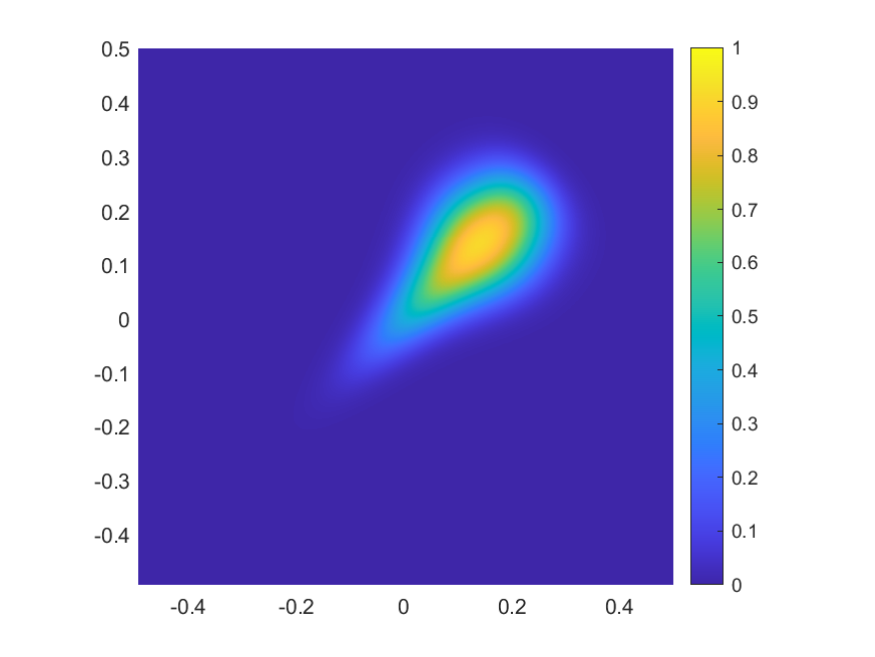}}
\centerline{t=0.5}
\end{minipage}
\centerline{Numerical solution for scheme \eqref{eq5} with $\beta_1=\frac23,\;\beta_2=1$}

\caption{The contour plots of the moving square wave with the diffusion coefficient of $\mathbf K=2\times10^{-3}$.}
\label{ex6_1}
\end{figure}

%
%

\begin{figure}[htp]
\centering
\begin{minipage}{0.32\linewidth}
\centerline{\includegraphics[width=\textwidth]{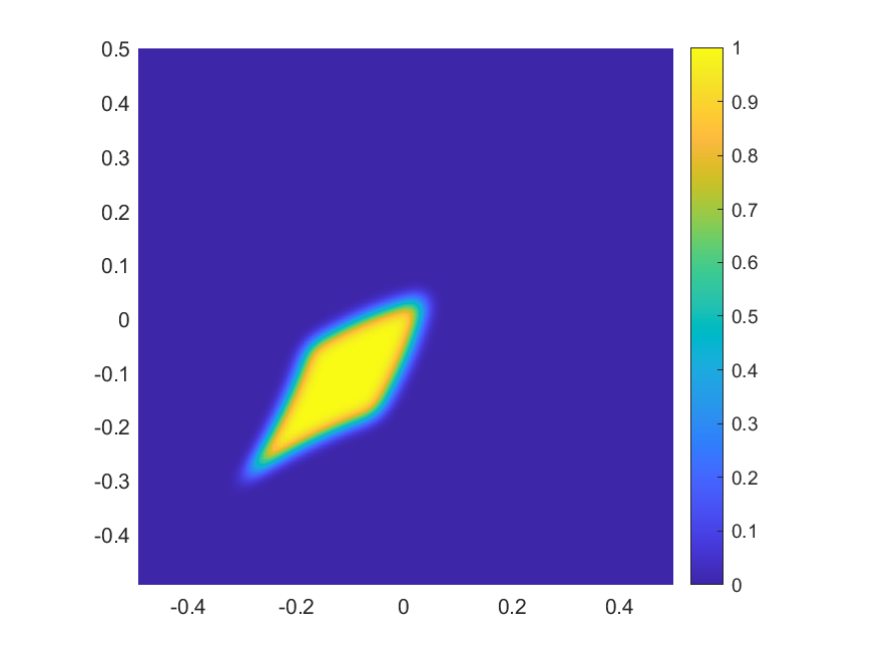}}
\centerline{t=0.25}
\end{minipage}
\begin{minipage}{0.32\linewidth}
\centerline{\includegraphics[width=\textwidth]{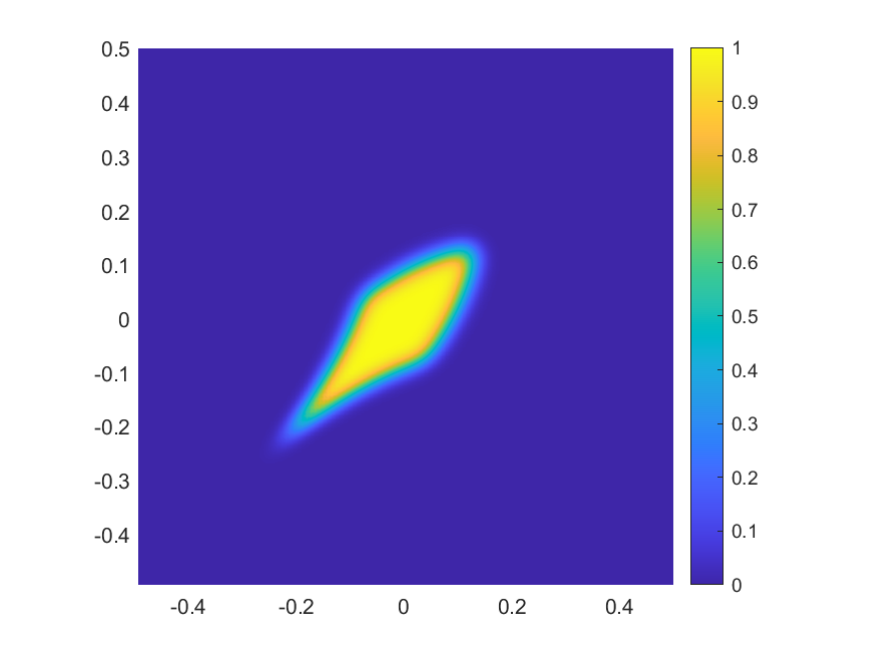}}
\centerline{t=0.35}
\end{minipage}
\begin{minipage}{0.32\linewidth}
\centerline{\includegraphics[width=\textwidth]{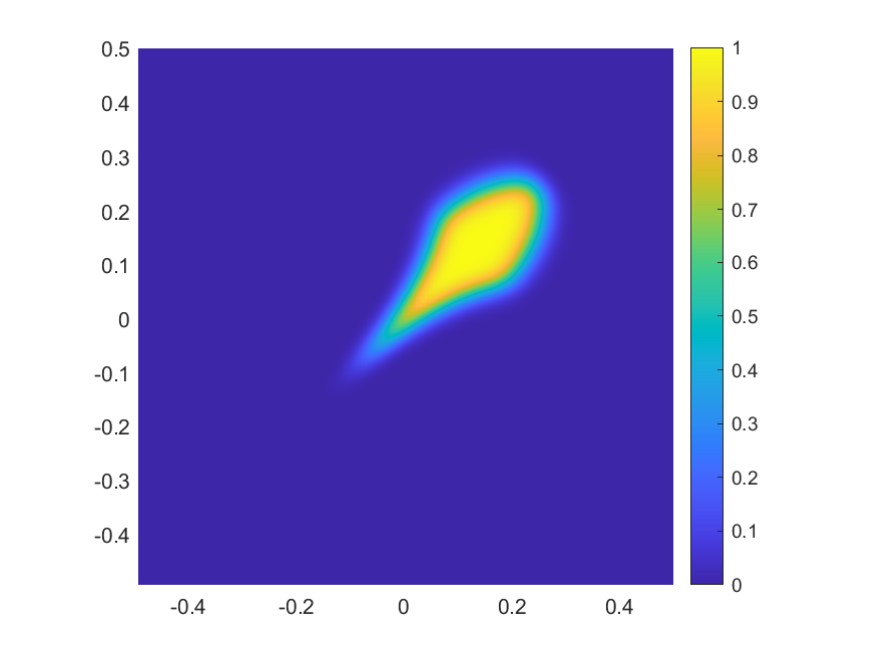}}
\centerline{t=0.5}
\end{minipage}
\centerline{Reference solution}

\begin{minipage}{0.32\linewidth}
\centerline{\includegraphics[width=\textwidth]{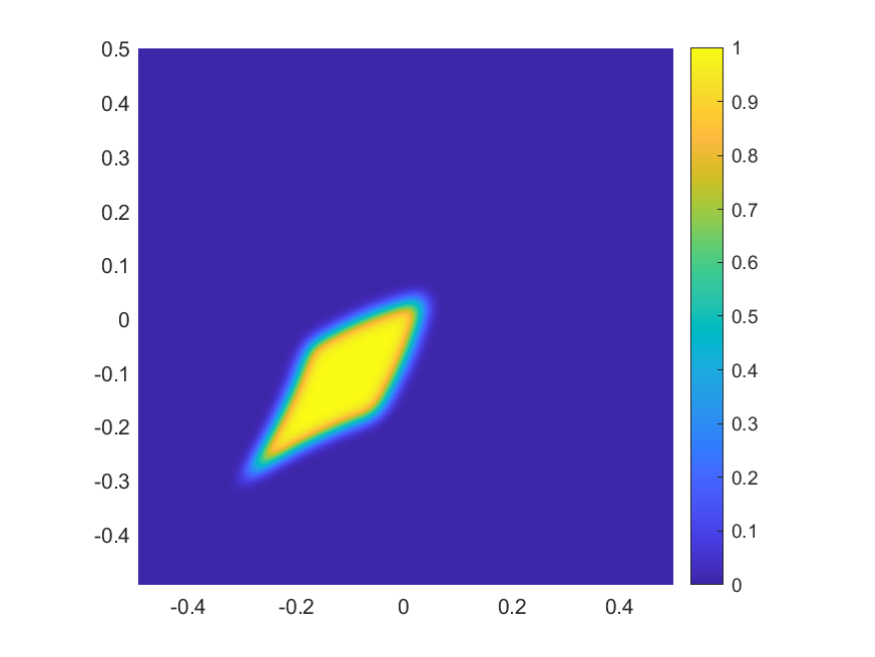}}
\centerline{t=0.25}
\end{minipage}
\begin{minipage}{0.32\linewidth}
\centerline{\includegraphics[width=\textwidth]{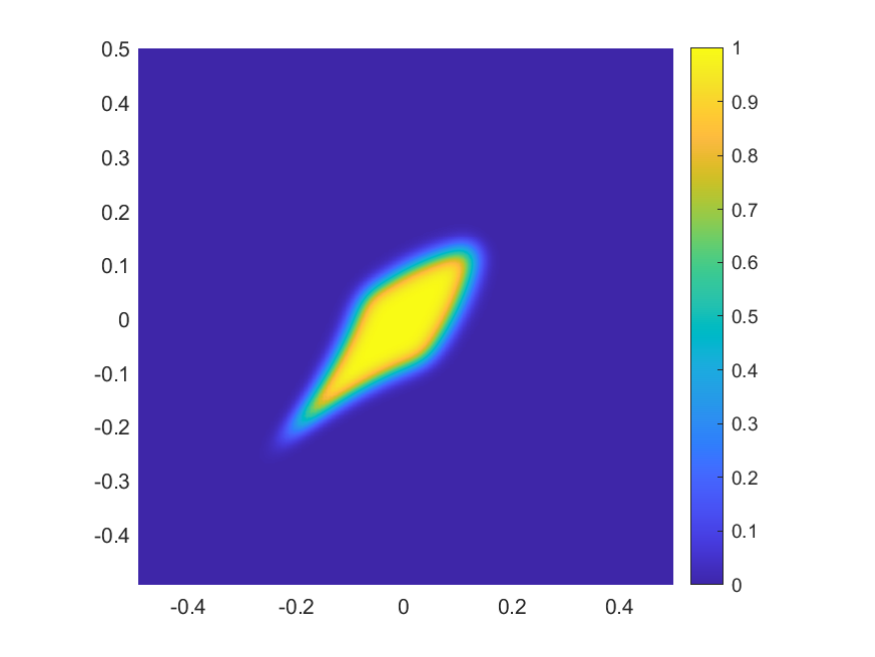}}
\centerline{t=0.35}
\end{minipage}
\begin{minipage}{0.32\linewidth}
\centerline{\includegraphics[width=\textwidth]{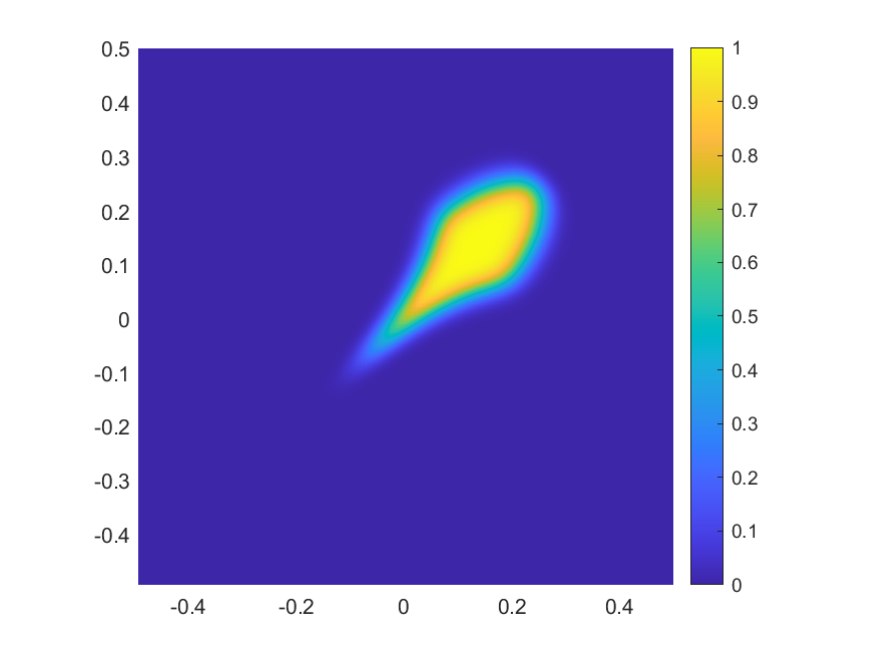}}
\centerline{t=0.5}
\end{minipage}
\centerline{Numerical solution for scheme \eqref{eq5} with $\beta_1=\frac23,\;\beta_2=1$}

\caption{The contour plots of the moving square wave with the diffusion coefficient of $\mathbf K=5\times10^{-4}$.}
\label{ex6_3}
\end{figure}
\textbf{Example.4.} In the following example, we solve a benchmark problem for the Allen-Cahn equation:
\begin{equation}
\begin{aligned}
u_t(x,y,t)&=-\frac{u^3(x,y,t)-u(x,y,t)}{\epsilon^2}+\Delta u(x,y,t),
\end{aligned}
\end{equation}
where $u$ is subject to periodic boundary conditions and $(x,y)\in[-\pi,\pi]\times[-\pi,\pi]$. We replace $f(t^{n+\beta_1})$ with $f(t^{n+\beta_1},\hat u^{n+\beta_1})$ and $f(t^{n+\beta_2})$ with $f(t^{n+\beta_2},\hat u^{n+\beta_2})$ in scheme \eqref{eq5}, where
\begin{equation}
\hat u^{n+\beta_1}=u^n+\beta_1\Delta t(f(t^n,u^n)-\mathcal Lu^n),\;\hat u^{n+\beta_2}=u^n+\beta_2\Delta t(f(t^n,u^n)-\mathcal Lu^n).
\end{equation}
The initial condition is given as
\begin{equation}
u(x,y,0)=\left\{
\begin{array}{lr}
1,\;(\sqrt 3-0.3)^2\leq (x-0)^2+(y-1)^2\leq3, \\
1,\;(\sqrt 3-0.3)^2\leq (x-\frac{\sqrt3}{2})^2+(y+\frac12)^2\leq3, \\
1,\;(\sqrt 3-0.3)^2\leq (x+\frac{\sqrt3}{2})^2+(y+\frac12)^2\leq3, \\
-1,\;others.
\end{array}
\right.
\end{equation}
We use the Fourier Galerkin method with $N_x=N_y=500$ in space. We set $\Delta t=10^{-6}$, $\epsilon=0.2$, $\beta_1=\frac23$ and $\beta_2=1$ and plotted the solution evolution at $t=0$, $0.005$, $0.01$, $0.02$, $0.05$, $0.1$ in Fig.~\ref{ex5_1}. 

\begin{figure}[htp]
\centering
\begin{minipage}{0.32\linewidth}
\centerline{\includegraphics[width=\textwidth]{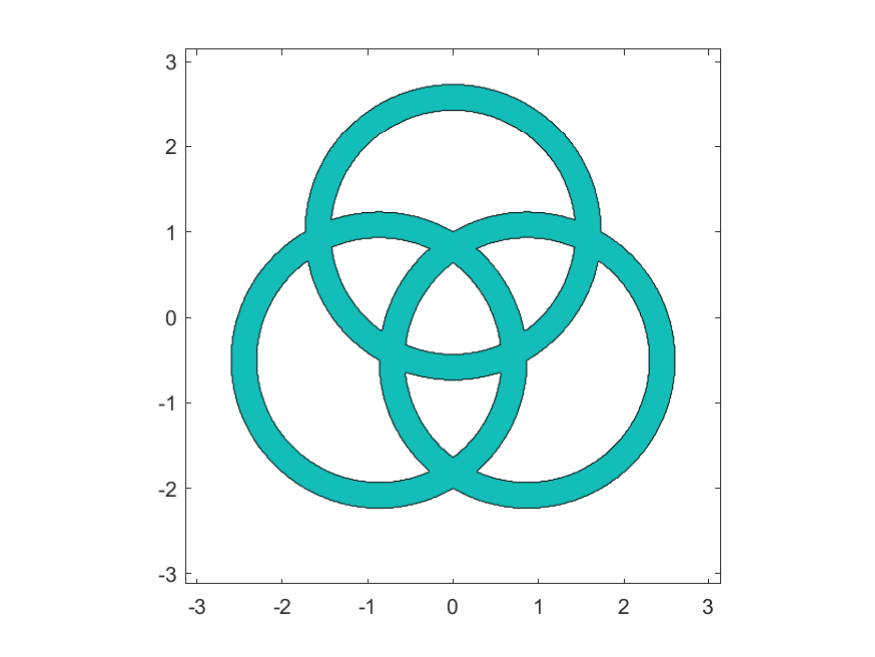}}
\centerline{t=0}
\end{minipage}
\begin{minipage}{0.32\linewidth}
\centerline{\includegraphics[width=\textwidth]{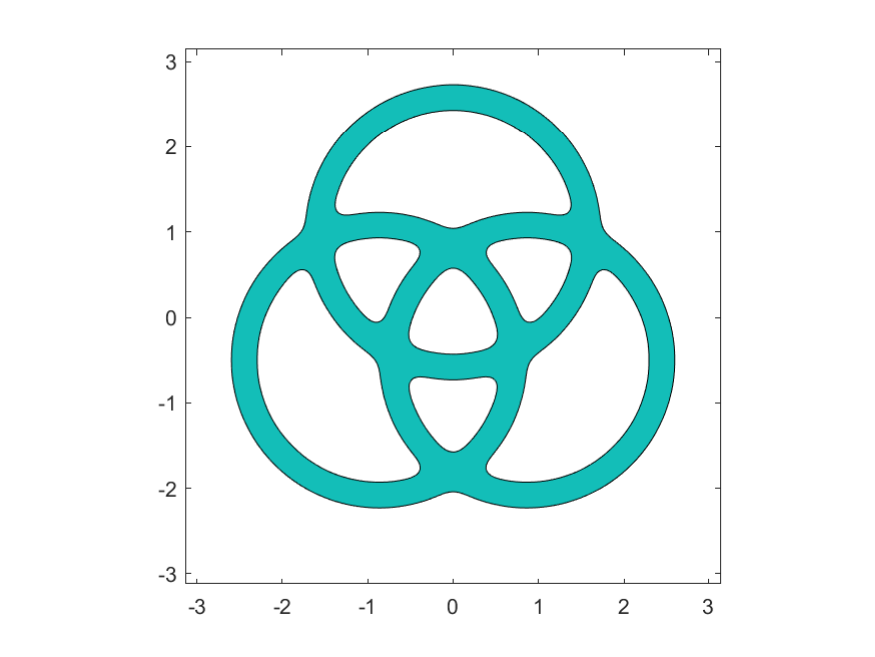}}
\centerline{t=0.005}
\end{minipage}
\begin{minipage}{0.32\linewidth}
\centerline{\includegraphics[width=\textwidth]{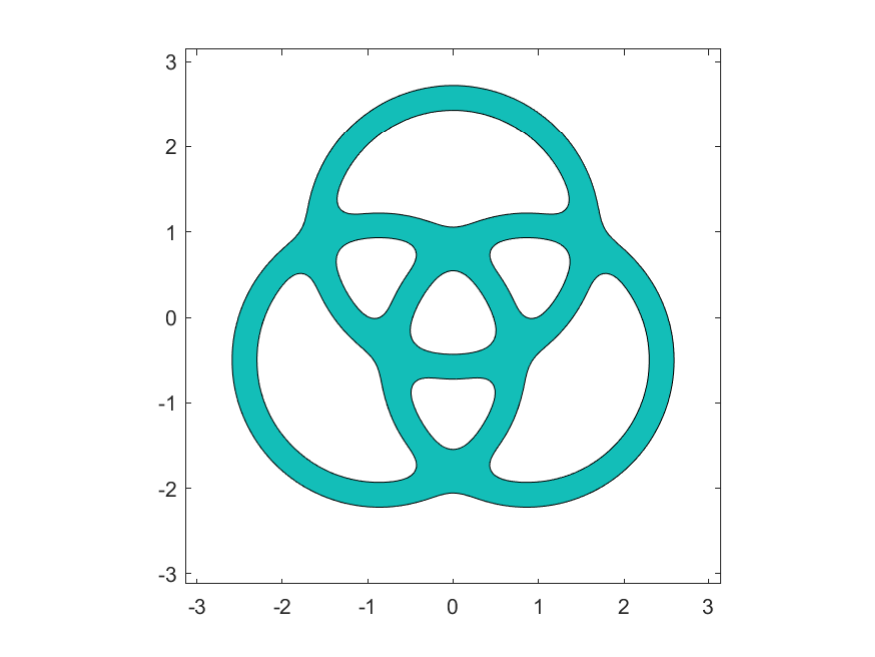}}
\centerline{t=0.01}
\end{minipage}

\begin{minipage}{0.32\linewidth}
\centerline{\includegraphics[width=\textwidth]{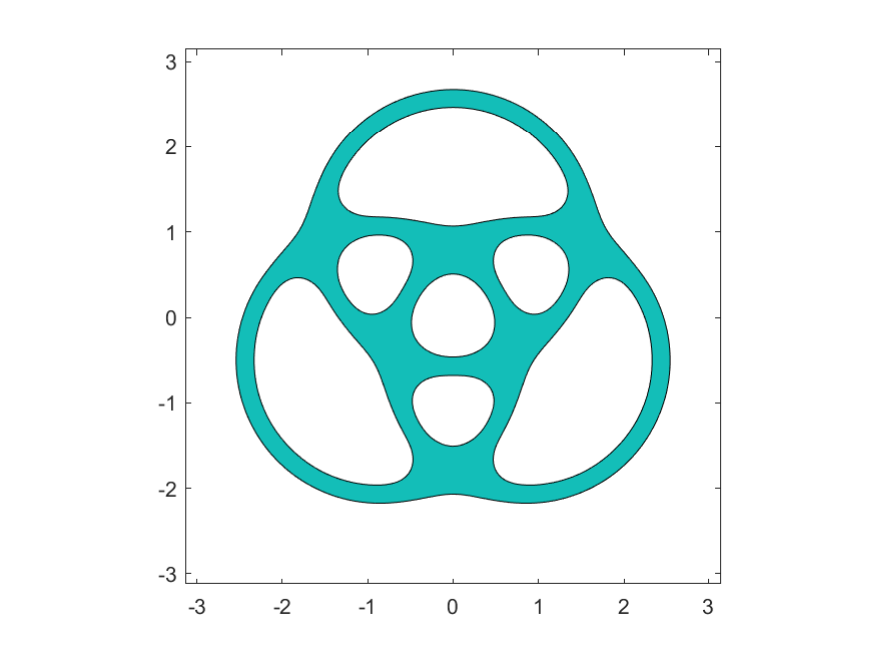}}
\centerline{t=0.02}
\end{minipage}
\begin{minipage}{0.32\linewidth}
\centerline{\includegraphics[width=\textwidth]{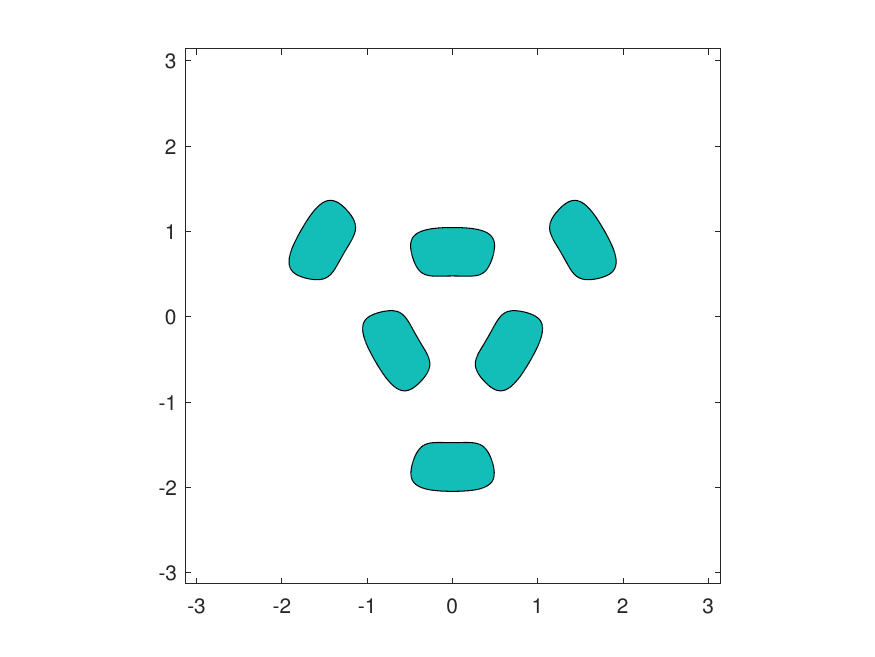}}
\centerline{t=0.05}
\end{minipage}
\begin{minipage}{0.32\linewidth}
\centerline{\includegraphics[width=\textwidth]{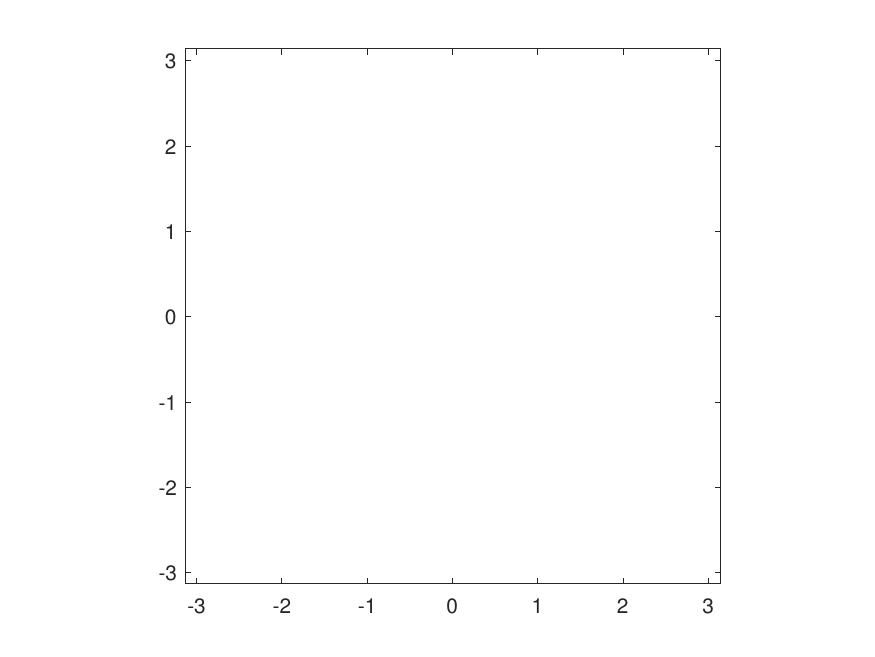}}
\centerline{t=0.1}
\end{minipage}

\caption{Solution evolution of Example 4 at $t=0$, $0.005$, $0.01$, $0.02$, $0.05$, $0.1$.}
\label{ex5_1}
\end{figure}
\section{Concluding remarks}
In this paper, we construct a series of high-order one-step schemes for parabolic equations and rigorously prove that these schemes achieve A-stability and even L-stability under specified parameter selections. Furthermore, we demonstrate that the proposed one-step schemes constitutes an effective extension of classical one-step Runge-Kutta methods. Numerical experiments conclusively validate that the new schemes attain broader stability regions than RK methods and exhibit enhanced computational efficacy across diverse parabolic problems.

\nocite{*}

\bibliographystyle{plain}
\bibliography{baer1998heat,S0021999106005651,gassner2008discontinuous,dumbser2009high,aslam1996level,
markstein2014nonsteady,rubinshteuin1971stefan,balsara2008simulating,osher2004level,sethian1999level,
ju2021maximum,martin2016extrapolated,meyer2014stabilized,hochbruck2005exponential,huang2024new,
akrivis2016backward,akrivis2015fully,li2020long,hairer1993sp,wanner1996solving,qin2023positivity}

\end{document}